\title{Construction of nice nilpotent Lie groups}
\author{Diego Conti and Federico A. Rossi}
\newtheorem{theorem}{Theorem}[section]
\newtheorem{lemma}[theorem]{Lemma}
\newtheorem{corollary}[theorem]{Corollary}
\newtheorem{proposition}[theorem]{Proposition}
\theoremstyle{definition}
\newtheorem{definition}[theorem]{Definition}
\newtheorem{example}[theorem]{Example}
\theoremstyle{remark}
\newtheorem{remark}[theorem]{Remark}
\newcommand{\hash}{\#}
\newcommand{\abs}[1]{\left\vert#1\right\vert}
\newcommand{\R}{\mathbb{R}}
\newcommand{\im}{\mathrm{Im}\,}         %image of a function
\newcommand{\lie}[1]{\mathfrak{#1}}     %Lie algebras
\newcommand{\g}{\lie{g}}
\newcommand{\Z}{\mathbb{Z}}
\newcommand{\Q}{\mathbb{Q}}
\newcommand{\C}{\mathbb{C}}
\newcommand{\hook}{\lrcorner\,}
\newcommand{\Gtwo}{\mathrm{G}_2}
\newcommand{\GL}{\mathrm{GL}}
\newcommand{\gl}{\lie{gl}}
\newcommand{\Span}[1]{\operatorname{Span}\left\{#1\right\}}
\newcommand{\tran}[1]{\hspace{.2mm}\prescript{t\hspace{-.5mm}}{}{#1}}
\DeclareMathOperator{\Aut}{Aut}
\DeclareMathOperator{\diag}{diag}
\DeclareMathOperator{\coker}{coker}
\DeclareMathOperator{\logsign}{logsign}
\newcolumntype{C}{>{$}c<{$}}
\newcolumntype{L}{>{$}l<{$}}
\newcolumntype{R}{>{$}r<{$}}
\begin{document}
\VerbatimFootnotes
\maketitle

\begin{abstract}
We illustrate an algorithm to classify nice nilpotent Lie algebras of dimension $n$ up to a suitable notion of equivalence; applying the algorithm, we obtain complete listings for $n\leq9$. On every nilpotent Lie algebra of dimension $\leq 7$, we determine the number of inequivalent nice bases, which can be $0$, $1$, or $2$.

We show that any nilpotent Lie algebra of dimension $n$ has at most countably many inequivalent nice bases.
\end{abstract}

\renewcommand{\thefootnote}{\fnsymbol{footnote}}
\footnotetext{\emph{MSC 2010}: 22E25; 17B30, 53C30.}
\footnotetext{\emph{Keywords}: Nilpotent Lie groups, nice Lie algebras.}
\footnotetext{This work was partially supported by GNSAGA of INdAM and by PRIN $2015$ ``Real and Complex Manifolds: Geometry, Topology and Harmonic Analysis''.}
\renewcommand{\thefootnote}{\arabic{footnote}}

Nice nilpotent Lie algebras were introduced in \cite{LauretWill:Einstein} as a useful device in the construction of Einstein Riemannian solvmanifolds. Indeed, by \cite{Heber:noncompact,Lauret:Einstein_solvmanifolds} all such metrics are obtained by taking a standard extension of a nilpotent Lie group that carries a nilsoliton metric. The Ricci operator of any left-invariant metric for which a nice basis is orthogonal is diagonalized by that basis; this simplifies greatly the study of the nilsoliton condition (\cite{Nikolayevsky}). Conversely, nilsolitons with simple pre-Einstein derivation are nice \cite[Lemma 2.5]{Payne:Applications}. For similar reasons, nice nilpotent Lie algebras have been studied in the context of the Ricci flow \cite{LauretWill:diagonalization}. Recently, nice nilpotent Lie algebras have been used in the pseudoriemannian context to obtain explicit invariant Einstein metrics on nilpotent Lie groups (\cite{ContiRossi:EinsteinNice,ContiRossi:RicciFlat,ContiRossi:EinsteinNilpotent}).

Regardless of the nice condition, nilpotent Lie groups are an important source of examples in several areas of differential geometry, such as:
complex structures (\cite{Salamon:ComplexStructures,CavalcantiGualtieri:GeneralizedNilmanifolds,CorderoFernandezUgarte}),
special metrics (\cite{FinoPartonSalamon,RossiTomassini}),
deformations and cohomology (\cite{Rollenske:GeometryNilmanifold,BazzoniFernandezMunoz:NonFormalCoSymplectic,AngellaRossi:DComplexCohomology}),
metrics with curvature conditions (\cite{FernandezCulma}),
quaternionic geometry (\cite{DottiFino:AbelianHypercomplex,ContiMadsen:Harmonic}),
$\Gtwo$-structures (\cite{Fernandez:Calibrated,FinoLujan:TorsionFreeG22,ContiFernandez:calibrated}),
parahermitian structures (\cite{Hengesbach,ContiRossi:ricci}),
product structures (\cite{Andrada:ComplexProduct}),
geometric flows (\cite{BCFG:BoundaryHitchinHypoFlow,Fernandez-Culma2015:SolitonOn6DimNilmanifolds}).
What makes these applications possible is not only the general properties of nilpotent Lie groups such as the existence of compact quotients (\cite{Malcev}) whose minimal model is the Koszul complex of the Lie algebra (\cite{Nomizu:cohomology}), but also the fact that they are classified up to dimension $7$ (see \cite{Magnin, Gong}).
We are not aware of any systematic classification of higher-dimensional nilpotent Lie algebras at the moment of writing; we hope that the complete lists of nice nilpotent Lie algebras of dimension $8$ and $9$ presented in this paper  will be useful in the construction of  examples and the study of geometric problems.

We think of a nice nilpotent Lie algebra as a pair $(\g,\mathcal{B})$, where $\g$ is a real nilpotent Lie algebra and $\mathcal{B}=\{e_1,\dotsc, e_n\}$ a basis such that each bracket $[e_i,e_j]$ lies in the span of some $e_k$ depending on $i,j$ and each interior product $e_i\hook de^j$ lies in the span of some $e^h$, also depending on $i,j$. Two nice nilpotent Lie algebras are considered equivalent if there is a Lie algebra isomorphism that maps basis elements to multiples of basis elements.

Whilst nilpotent Lie algebras (over the reals) are classified up to dimension $7$, no similar classification exists for nice nilpotent Lie algebras, although the smaller class of nilpotent Lie algebras with simple pre-Einstein derivation and invertible Gram matrix is classified in \cite{KadiogluPayne:Computational}. It is known that up to dimension six all nilpotent Lie algebras except one admit at least one nice basis (see \cite{LauretWill:diagonalization,FernandezCulma}). In \cite{ContiRossi:EinsteinNilpotent}, we found $11$ examples of $7$-dimensional nilpotent Lie algebras that do not admit a nice basis.

In this paper we present an algorithm to carry out the classification of nice nilpotent Lie algebras of dimension $n$ (see Section~\ref{sec:algorithm}). We classify nice nilpotent Lie algebras of dimension $\leq 9$ up to equivalence (see the Appendix~\ref{App:Tabelle} and ancillary files); for $n>9$ the algorithm remains valid, but implementation meets practical limits and the resulting lists would presumably be too long for any practical use. By comparison with \cite{Gong}, we list the $7$-dimensional nilpotent Lie algebras that do not admit a nice basis and those that admit two inequivalent bases (Theorem~\ref{thm:notnice7}); even in dimension $6$, our algorithm provides a much quicker proof compared to the aforementioned classifications (Proposition~\ref{prop:two_inequivalent_bases}).

Our classification is based on the notion of \emph{nice diagram}, namely a type of labeled directed acyclic graph associated to each nice Lie algebra; diagrams associated to equivalent nice Lie algebras are naturally isomorphic. We present algorithms to classify nice diagrams and, for each nice diagram, the corresponding nice Lie algebras. The set of Lie algebras associated to a nice diagram can be empty, discrete or even contain continuous families; in the latter case, our algorithm guarantees that distinct families correspond to inequivalent nice Lie algebras (Theorem \ref{thm:inequivalent}).

In order to classify the nice Lie algebras associated to a given nice diagram $\Delta$ up to equivalence, we first consider the action of the group $D_n$ of  $n\times n$ diagonal matrices, where $n$ is the dimension of the Lie algebra. The set of nice Lie algebras associated to a diagram $\Delta$ is represented by an open set in a real vector space $V_\Delta$, parametrizing the nonzero structure constants relative to the nice basis. The natural action of $D_n$ can be used to normalize some structure constants to $\pm1$, producing a fundamental domain for this action, namely an open set in a finite union of affine spaces  (Proposition~\ref{prop:fundamental_domain}). The resulting elements of $V_\Delta$ define a nice nilpotent Lie algebra provided the Jacobi identity is satisfied; this is a system of quadratic equations in the structure constants. For $n\leq 8$ these equations reduce to linear equations thanks to the normalizations performed earlier; in dimension $9$ there are only $20$ diagrams for which quadratic equations survive the normalization; it turns out that only $12$ of them give rise to continuous families
of Lie algebras. The last step of our algorithm takes into account the action of the finite group of automorphisms of $\Delta$ to ensure that distinct families correspond to inequivalent nice Lie algebras.

A nilpotent Lie algebra can admit two or more inequivalent nice bases. We prove that, on a fixed nilpotent Lie algebra, the set of nice bases taken up to equivalence is at most countable (Corollary~\ref{cor:atmostcountable}). Moreover, comparing our classification to Gong's classification of nilpotent Lie algebras (see \cite{Gong}), we prove that this set has at most two elements in dimensions $n\leq7$ (Theorem~\ref{thm:notnice7}).

\smallskip
\textbf{Acknowledgments}
We thank Jorge Lauret and Tracy L. Payne for their useful suggestions and remarks.

\section{Nice Lie algebras and nice diagrams}
\label{sec:diagrams}
We work in the category $\mathcal{N}$ of nice nilpotent Lie algebras, whose objects are pairs $(\g,\mathcal{B})$, where $\g$ is a real nilpotent Lie algebra and $\mathcal{B}$ is a nice basis. A nice basis on a Lie algebra $\g$ is a basis  $\{e_1,\dotsc, e_n\}$ of $\g$ such that each $[e_i,e_j]$ is a multiple of a single basis element $e_k$ depending on $i,j$, and each $e_i\hook de^j$ is a multiple of a single $e^h$, depending on $i,j$; here, $\{e^1,\dotsc, e^n\}$ denotes the dual basis.

Clearly, replacing a basis element with a multiple does not affect this property. Thus, we define \emph{morphisms} of $\mathcal{N}$ as Lie algebra homomorphisms that map basis elements to multiples of basis elements. Invertible homomorphisms will be called \emph{equivalences} to avoid confusion with Lie algebra isomorphisms; note that two nice Lie algebras $(\g,\mathcal{B}), (\g',\mathcal{B}')$ may be isomorphic without being equivalent (see the remark below).  We can also think of $\mathcal{N}$ as the set of nilpotent Lie algebra structures on $\R^n$ such that the standard basis is nice, up to the group $\Sigma_n\ltimes D_n$, i.e. the semidirect product of the group of permutations in $n$ letters and the group of diagonal real matrices.

A Lie group will be said to be nice if its Lie algebra has a nice basis.
\begin{remark}\label{rem:nice6notUnique}
It is known that all nilpotent Lie algebra of dimension six except one have a nice basis (see \cite{LauretWill:diagonalization} or \cite{FernandezCulma}; see also Section~\ref{sec:algorithm}). The nice basis is not unique, even up to equivalence; indeed, consider the nice Lie algebra
\[\texttt{62:2}\qquad(0,0,0,0,e^{12},e^{34}).\] % 62:2#70
This notation means that the coframe dual to the nice basis  $e^1,\dotsc, e^6$ satisfies
\[de^1=0=\dots = de^4,\ de^5=e^{12}=e^1\wedge e^2,\ de^6=e^{34}=e^3\wedge e^4;\]
the string \texttt{62:2} is the name of the nice Lie algebra as explained in Section~\ref{sec:nice_classification}.

Under the change of coframe  $e^1-e^4, e^2+e^3, e^1+e^4, e^3-e^2, e^5+e^6,  e^5-e^6$, one can write the same Lie algebra as
\[\texttt{62:4a}\qquad(0,0,0,0,e^{13}+e^{24},e^{12}+e^{34}),\] % 62:4#76
which is clearly not equivalent.
\end{remark}
We aim at classifying nice nilpotent Lie algebras up to equivalence; our main tool will be a functor from $\mathcal{N}$ to a category of graphs.

Fix a nice nilpotent Lie algebra $\g$ (since we have defined nice Lie algebras as pairs, it is understood that a nice basis $\mathcal{B}$ is also fixed). Let $\g^i$ be the lower central series, $\g^0=\g$, $\g^{i+1}=[\g,\g^i]$. We recall from \cite{LauretWill:diagonalization} that $\g$ has \emph{type} $(a_1,\dotsc, a_s)$ if
\[(\dim \g, \dim \g^1,\dots, \dim \g^s)=(a_1+\dots + a_s, a_2+\dots + a_s, \dots, a_s).\]
Let $\mathcal{B}$ be the nice basis fixed on $\g$. We say that a subspace $V\subset\g$ is \emph{adapted} to $\mathcal{B}$ if it spanned by elements of the basis $\mathcal{B}$. If $V,W$ are subspaces adapted to $\mathcal{B}$, then $[V,W]$ is also  adapted to $\mathcal{B}$; this is because $[V,W]$ is spanned by brackets of basis elements, and each such bracket belongs to the basis, up to a scalar. This immediately implies:
\begin{lemma}
\label{lemma:adapted}
If $\g$  is a nice Lie algebra with nice basis $\mathcal{B}$, then each $\g^i$ is \emph{adapted} to $\mathcal{B}$.
\end{lemma}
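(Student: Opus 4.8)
The plan is to argue by induction on $i$, using the closure property recorded immediately before the statement: if $V,W\subset\g$ are adapted to $\mathcal{B}$, then $[V,W]$ is again adapted to $\mathcal{B}$. Since $\g^i$ is defined by the recursion $\g^0=\g$, $\g^{i+1}=[\g,\g^i]$, this is exactly the situation to which the closure property applies at each step.

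For the base case, $\g^0=\g$ is by definition the span of all elements of $\mathcal{B}$, hence trivially adapted. For the inductive step, assume $\g^i$ is adapted to $\mathcal{B}$; since $\g$ itself is adapted, applying the closure property with $V=\g$ and $W=\g^i$ shows that $\g^{i+1}=[\g,\g^i]$ is adapted to $\mathcal{B}$. This completes the induction.

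I do not expect any real obstacle here: the lemma is a direct formal consequence of the definition of the lower central series together with the already-established fact that adaptedness is preserved under brackets, and in the recursion one factor is always $\g$, which is manifestly adapted. The only point to be mindful of is to invoke the closure property itself correctly — i.e.\ that a bracket of basis elements is a scalar multiple of a single basis element, so that $[V,W]$ is spanned by basis elements whenever $V$ and $W$ are — but that has been dispatched in the sentence preceding the statement.
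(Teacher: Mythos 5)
Your proof is correct and is exactly the argument the paper intends: the paper states the closure property for brackets of adapted subspaces and then says the lemma "immediately" follows, which is precisely the induction on the lower central series that you have spelled out. No issues.
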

Thus, we can always reorder a nice basis $\{e_1,\dotsc, e_n\}$ in such a way that
\[\g^1=\Span{e_i\mid i>a_1}, \quad \dots,\quad \g^s=\Span{e_i\mid i>a_1+\dots+a_{s-1}}.\]

To each nice nilpotent Lie algebra we can associate a directed graph $\Delta$, by the following rules:
\begin{itemize}
\item the nodes of $\Delta$ are the elements of the basis $\mathcal{B}$; in symbols, $N(\Delta)=\mathcal{B}$.
\item there is an arrow from $e_i$ to $e_j$ if $e_j$ is a nonzero multiple of some  $[e_i,e_h]$, i.e. $e_i\hook de^j\neq0$. In this case, we shall write $(e_i,e_j)\in E(\Delta)$, or simply $e_i\to e_j$.
\end{itemize}
By construction, $\Delta$ is a directed acyclic graph with no multiple arrows with the same source and destination; a graph with these properties will be called a \emph{diagram}. An \emph{isomorphism} of diagrams is an isomorphism of graphs, i.e. a pair of compatible bijections between nodes and arrows.

A \emph{labeled diagram} is a diagram $\Delta$ enhanced with a function from $E(\Delta)$ to $N(\Delta)$; the node associated to an arrow will be called its \emph{label}. Given a nice nilpotent Lie algebra, we can modify the above construction to give a labeled diagram by declaring that the arrow $e_i\to e_j$ has label $e_h$ when
$e_j$ is a multiple of some $[e_i,e_h]$; we will write $e_i\xrightarrow{e_h}e_j$.

An \emph{isomorphism of labeled diagrams} is an  isomorphism of diagrams $f$ such that whenever the arrow $A$ is labeled $e$, then the arrow $f(A)$ is labeled $f(e)$; two isomorphic labeled diagrams will be said to be \emph{equivalent}. We will denote by $\Aut(\Delta)$ the group of automorphisms of a labeled diagram; note that by construction $\Aut(\Delta)$ is a subgroup of $\Sigma_n$. It is clear that equivalent nice Lie algebras determine equivalent labeled diagrams.

The nodes of a diagram $\Delta$ have a natural filtration $N(\Delta)=N_0\supset N_1\supset \dots \supset N_s$, where $N_{i+1}$ contains all the nodes that are reached by at least one arrow in $N_i$. Like for nice Lie algebras, we will say a diagram has \emph{type} $(a_1,\dotsc, a_s)$ if
\[(\abs{N}, \abs{ N_1},\dots, \abs{N_s})=(a_1+\dots + a_s, a_2+\dots + a_s, \dots ,a_s).\]
For any $n$, diagrams of type $(n)$ contain no arrows; hence, they are equivalent.

Up to equivalence, it is no loss of generality to identify the nodes of a diagram with the numbers $\{1,\dotsc, n\}$. The labeled diagram of a nice Lie algebra clearly satisfies the following conditions:
\begin{enumerate}[label=(N\arabic*)]
\item\label{enum:condNice1} any two arrows with the same source have different labels;
\item\label{enum:condNice2} any two arrows with the same destination have different labels;
\item\label{enum:condNice3} if $i\xrightarrow{j}k$ is an arrow, then $i$ differs from $j$ and $j\xrightarrow{i}k$ is also an arrow.
\end{enumerate}
In order to state the fourth condition that the labeled diagram of a nice Lie algebra must satisfy, we need to introduce more language.

Given a diagram $\Delta$ satisfying~\ref{enum:condNice1}--\ref{enum:condNice3}, let $\mathcal{I}_\Delta$ be the set of the $I=\{\{i,j\},k\}$ such that $i\xrightarrow{j}k$; we shall write
\[E_I=e^{ij}\otimes e_k, \quad I=\{\{i,j\},k\},\ i<j.\]
Take the $D_n$-representation $V_\Delta$ freely generated by the $E_I$, $I\in\mathcal{I}_\Delta$.  To obtain an actual Lie algebra from a diagram, one needs to fix an element
\[c=\sum_{I\in \mathcal{I}_\Delta} c_IE_I,\]
that will determine the structure constants; whenever $I=\{\{i,j\},k\}$, we shall write
\[c_{ijk}=\begin{cases}
c_I,& i<j\\ 0 & i> j
          \end{cases}.
\]
We say a diagram has a \emph{double arrow} $k\xrightarrow{i,j} h$ if there is some $l$ for which $k\xrightarrow{l} h$ and $i \xrightarrow{j} l$ are arrows in the diagram with $k\neq i,j$; double arrows are parametrized by
\[\mathcal{I}_{\Delta\otimes\Delta}=\{(\{i,j\},k,h) \mid k\xrightarrow{i,j} h\}.\]
A double arrow $k\xrightarrow{i,j} h$ reflects the fact that $[[e_i,e_j],e_k]$ is a nonzero multiple of $e_h$. The exclusion of the case $k=i$ or $k=j$ is motivated by the fact that the Jacobi identity holds trivially for repeated indices. We shall write
\begin{equation}
\label{eqn:IstarJ}
I\star J=(\{i,j\},k,h), \quad I=\{\{i,j\},l\},\ J=\{\{l,k\},h\};
 \end{equation}
each element of $\mathcal{I}_{\Delta\otimes\Delta}$ can be written uniquely as $I\star J$, with $I,J\in\mathcal{I}_\Delta$.

We can define another acyclic directed graph $\Delta\otimes\Delta$ with the same nodes as $\Delta$ and whose arrows are the double arrows in $\Delta$; note that this graph can have multiple arrows, and its natural labeling takes values in the power set of $N(\Delta)$, rather than $N(\Delta)$. We define a representation $V_{\Delta\otimes\Delta}$ spanned by
\[e^{ij}\otimes e^k\otimes e_h, \quad (\{i,j\},k,h) \in \mathcal{I}_{\Delta\otimes\Delta}.\]
We have a natural quadratic, equivariant map $V_\Delta\to V_{\Delta\otimes \Delta}$,
\[c_IE_I\mapsto c_Ic_JE_{IJ},\]
where given $I,J$ as in \eqref{eqn:IstarJ},
\[E_{IJ}=
 \begin{cases}
  E_{I\star J} & l<k\\
  -E_{I\star J} & l>k.
 \end{cases}
\]
and $E_{IJ}=0$ when $I\star J$ is not defined.

In order to express the Jacobi identity, we will need to consider the alternating map
\[\mathfrak{a}\colon \Lambda^2(\R^n)^*\otimes (\R^n)^*\otimes \R^n\mapsto \Lambda^3(\R^n)^*\otimes \R^n.\]

\begin{proposition}
\label{prop:jacobi}
Given a labeled diagram $\Delta$ satisfying ~\ref{enum:condNice1}--\ref{enum:condNice3} and $c\in V_\Delta$, define \allowbreak $d\colon (\R^n)^*\to\Lambda^2(\R^n)^*$ by
\[de^k=\sum_{\{\{i,j\},k\}\in \mathcal{I}_\Delta}c_{ijk} e^{ij}, \quad i<j.\]
This defines a Lie algebra with diagram $\Delta$ if and only if all of the following hold:
\begin{itemize}
\item for each $I \in\mathcal{I}_\Delta$, $c_I\neq0$.
\item $\sum_{I,J\in\mathcal{I}_\Delta} c_Ic_JE_{IJ}$ lies in the kernel of $\mathfrak{a}$.
\end{itemize}
\end{proposition}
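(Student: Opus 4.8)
The plan is to treat the two displayed conditions separately: the nonvanishing $c_I\neq 0$ governs whether the structure obtained from $c$ has labeled diagram \emph{exactly} $\Delta$, while membership in $\ker\mathfrak{a}$ is a restatement of the Jacobi identity. The starting point is the standard Chevalley--Eilenberg dictionary: extending $d$ to $\Lambda^\bullet(\R^n)^*$ as an antiderivation of degree one and setting $e^k([x,y])=-de^k(x,y)$ defines an anticommutative bracket on $\R^n$, and this bracket satisfies the Jacobi identity exactly when $d^2=0$; since $d^2$ is a derivation of $\Lambda^\bullet(\R^n)^*$, the latter is equivalent to $d^2e^k=0$ for all $k$. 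One also checks that, when $d^2=0$, the bracket is \emph{nilpotent}: as $\Delta$ is acyclic, we may reorder the nodes so that every arrow $i\to k$ has $i<k$, hence $de^k$ involves only $e^{ij}$ with $i,j<k$; then the decreasing filtration $F_k=\Span{e_k,\dots,e_n}$ satisfies $[\R^n,F_k]\subseteq F_{k+1}$ and therefore $\g^n=0$.

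Assuming $d^2=0$, so that the bracket is a nilpotent Lie algebra, I would next read off its labeled diagram $\Delta'$ with respect to the standard basis. A direct computation gives $[e_i,e_j]=\pm c_{I}\,e_k$ with $I=\{\{i,j\},k\}$, where $I$ (equivalently $k$) is uniquely determined by~\ref{enum:condNice1}, since $i\xrightarrow{j}k$ and $i\xrightarrow{j}k'$ would be arrows with the same source and the same label; and $e_i\hook de^j=\pm c_{I'}\,e^h$ with $I'=\{\{i,h\},j\}$, the label $h$ being unique because $\Delta$, being a diagram, has at most one arrow from $i$ to $j$. Thus the standard basis is nice, and $\Delta'$ contains the arrow $i\xrightarrow{h}k$ precisely when $\{\{i,h\},k\}\in\mathcal{I}_\Delta$ \emph{and} $c_{\{\{i,h\},k\}}\neq 0$. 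Comparing with $\Delta$, we get $\Delta'=\Delta$ if and only if $c_I\neq 0$ for every $I\in\mathcal{I}_\Delta$; and if some $c_I$ vanishes, $\Delta'$ has strictly fewer arrows than $\Delta$, so it is not even isomorphic to it.

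The core step is then to identify $\sum_k(d^2e^k)\otimes e_k\in\Lambda^3(\R^n)^*\otimes\R^n$ with $\mathfrak{a}\bigl(\sum_{I,J\in\mathcal{I}_\Delta}c_Ic_JE_{IJ}\bigr)$. Expanding $d^2e^k=\sum_{i<j}c_{ijk}\bigl(de^i\wedge e^j-e^i\wedge de^j\bigr)$ and substituting $de^i=\sum_{a<b}c_{abi}e^{ab}$, one sees that a resulting wedge $e^{ab}\wedge e^j$ (resp.\ $e^{ab}\wedge e^i$) is nonzero only when its three indices are distinct — the vanishing of the repeated-index terms is exactly why the definition of a double arrow excludes $k\in\{i,j\}$. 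A surviving term records arrows $a\xrightarrow{b}i$ and $i\xrightarrow{j}k$ with $j\notin\{a,b\}$, which is the same datum as an element $(\{a,b\},j,k)\in\mathcal{I}_{\Delta\otimes\Delta}$; this is written uniquely as $I\star J$ with $I=\{\{a,b\},i\}$, $J=\{\{i,j\},k\}$ (the intermediate node $i$ being recovered by~\ref{enum:condNice1}), and~\ref{enum:condNice3} provides the auxiliary arrow $j\xrightarrow{i}k$ needed to form $\mathcal{I}_{\Delta\otimes\Delta}$. Matching the coefficient $c_Ic_J$ against the sign convention defining $E_{IJ}$, and using that $\mathfrak{a}$ wedges the $\Lambda^2$-factor with the $(\R^n)^*$-factor, yields the identity $\sum_k(d^2e^k)\otimes e_k=\mathfrak{a}\bigl(\sum_{I,J}c_Ic_JE_{IJ}\bigr)$; hence $d^2=0$ if and only if $\sum_{I,J}c_Ic_JE_{IJ}\in\ker\mathfrak{a}$. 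Combined with the previous paragraph, this shows that $d$ defines a (necessarily nilpotent) Lie algebra with diagram $\Delta$ exactly when all $c_I\neq0$ and $\sum_{I,J}c_Ic_JE_{IJ}\in\ker\mathfrak{a}$, as claimed. I expect the only genuinely delicate point to be the sign bookkeeping in this last identification — reconciling the $i<j$ and $a<b$ conventions with the sign in $E_{IJ}$ and with the sign introduced by $\mathfrak{a}$ — together with the combinatorial claim that surviving terms correspond bijectively to $\mathcal{I}_{\Delta\otimes\Delta}$.
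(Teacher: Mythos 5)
Your proposal is correct and follows essentially the same route as the paper: the heart of both arguments is the identity $\sum_h d^2e^h\otimes e_h=\mathfrak{a}\bigl(\sum_{I,J}c_Ic_JE_{IJ}\bigr)$, obtained by expanding $d^2e^h$ and observing that the surviving (distinct-index) terms are exactly the double arrows of $\mathcal{I}_{\Delta\otimes\Delta}$. The paper's proof is a one-line version of this computation; you additionally spell out the points it leaves implicit (nilpotency via acyclicity, niceness of the standard basis via~\ref{enum:condNice1}, and the equivalence between $c_I\neq0$ for all $I$ and the associated diagram being exactly $\Delta$), all of which are handled correctly.
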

\begin{proof}
The Jacobi identity amounts to proving
\begin{multline*}
0=d^2e^h\otimes e_h=\sum_{\{\{l,k\},h\}\in \mathcal{I}_\Delta}c_{lkh} de^{lk}\otimes e_h
% = \sum_{\{\{l,k\},h\},\{\{i,j\},l\}\in\mathcal{I}}
% c_{lkh} (c_{ijl}e^{ijk}-c_{ijk}e^{ijl})
\\
 =\sum_{\{\{l,k\},h\},\{\{i,j\},l\}\in\mathcal{I}_\Delta}  c_{ijl}(c_{lkh}-c_{klh})e^{ijk} \otimes e_h
 =\sum_{I,J} c_Ic_J\mathfrak{a}(E_{IJ}). \qedhere
\end{multline*}
\end{proof}

Given a diagram $\Delta$ and $I\in\mathcal{I}_\Delta$, let $\alpha_I$ be the weight for the action of $D_n$ on $E_I$. Choose a total ordering on $\mathcal{I}_\Delta$ and let $M_\Delta$ be the matrix whose rows represent $\alpha_I$ in the basis (dual to) $e^1\otimes e_1,\dots, e^n\otimes e_n$. The matrix $M_\Delta$ is known as the \emph{root matrix} in the literature (up to a sign) and it encodes important properties of the associated Lie algebras (\cite{Nikolayevsky,Payne:ExistenceOfSolitonMetrics}); for instance, elements of its kernel correspond to derivations with eigenvectors $e_1,\dotsc, e_n$.

The root matrix also gives important information regarding the action of $D_n$ on $V_\Delta$ (which is essential for a classification up to equivalence, see Proposition~\ref{prop:EquivalentIFFautOrDn}). Indeed, let $m=\abs{\mathcal{I}_\Delta}$; then $M_\Delta$ is an $m\times n$ matrix which can  viewed as a Lie algebra homomorphism $d_n\to d_m$. This homomorphism realizes the correspondence between the natural action of $D_n$ on $V_\Delta\subset\Lambda^2T^*\otimes T$ and the action of $D_m$ via the diagram
\begin{equation*}
\xymatrix{
d_n\ar[d]_\exp \ar[r]^{M_\Delta} & d_m\ar[d]^\exp \\
D_n \ar[r]^{e^{M_\Delta}} & D_m
}
\end{equation*}
By construction the entries of $M_\Delta$ are $0$ or $\pm1$; by taking the obvious projection, we obtain a matrix $M_{\Delta,2}$ with entries in $\Z_2$.

\begin{example}
The diagram of Figure~\ref{figure:22487111} has double arrows
\[4\xrightarrow{1,2}7, \quad 2\xrightarrow{1,4}7,\quad 2\xrightarrow{1,3}6,\quad 1\xrightarrow{2,4}7,\quad 1\xrightarrow{2,3}6.\]
This means that the Jacobi identity contains an equation with three terms, i.e.
\[c_{123}c_{347}+c_{246}c_{617}+c_{415}c_{527}=0.\]
In this case, the equations admit a solution; up to equivalence, we find  the one-parameter family of Lie algebras
 \[\texttt{754321:9}\quad(0,0,(1-\lambda) e^{12},e^{13},\lambda e^{14}+e^{23},e^{24}+e^{15},e^{34}+e^{25}+e^{16}).\]%754321:9#22487111
Note that the arrow $3\xrightarrow{1,3}7$ is missing because of our definition of double arrow.
\end{example}

 \begin{figure}[thp]
\caption{Example of a  nice diagram}
 \subfloat[The diagram $\Delta$]{
  \includegraphics[width=0.4\textwidth]{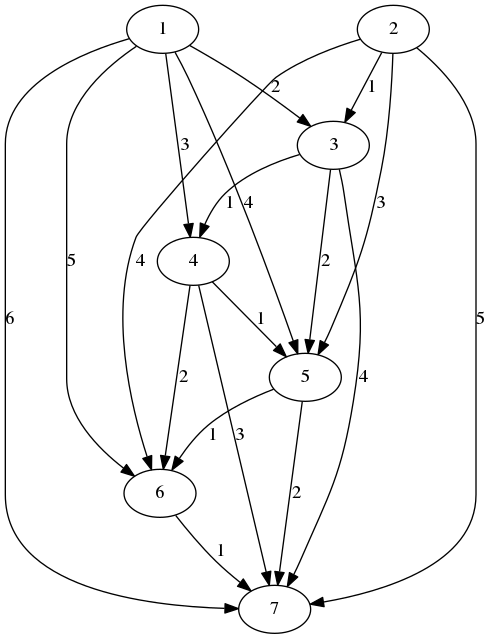}\label{figure:22487111}
 }\hfill
 \subfloat[The associated diagram $\Delta\otimes\Delta$]{
  \includegraphics[width=0.5\textwidth]{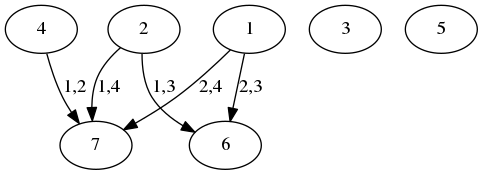}\label{figure:22487111_delta_delta}
 }
 \end{figure}

Notice that the kernel of $\mathfrak{a}$ is spanned by elements of the form
\[e^{ij}\otimes e^i \otimes e_h, \quad e^{ij}\otimes e^k\otimes e_h +  e^{kj}\otimes e^i\otimes e_h;\]
thus, terms of the form $e^{ij}\otimes e^k\otimes e_h$ with $k$ distinct from $i,j$ must appear in pairs or triples inside the sum $\sum_{I,J\in\mathcal{I}} c_Ic_JE_{IJ}$. The Jacobi identity implies a condition on the diagram:
\begin{enumerate}[resume*]
 \item\label{enum:condNice4} There do not exist four different nodes $i,j,k,v$ such that exactly one of
\[i\xrightarrow{j,k} v, \quad j\xrightarrow{k,i} v,\quad  k\xrightarrow{i,j} v\]
is a double arrow.
\end{enumerate}
In terms of the Gram matrix $U=M_\Delta \tran{M_\Delta}$ considered in \cite{Payne:ExistenceOfSolitonMetrics}, double arrows correspond to entries equal to $-1$; condition \ref{enum:condNice4} is equivalent to nonexistence of quadruples of multiplicity one in the sense of \cite{Payne:Methods}.

\begin{definition}
A labeled diagram will be called a \emph{nice diagram} if it satisfies conditions~\ref{enum:condNice1}--\ref{enum:condNice4}.
\end{definition}
Note that condition~\ref{enum:condNice4} is independent of~\ref{enum:condNice1}--\ref{enum:condNice3}, as shown by the example of Figure~\ref{fig:notverynice}. Summarizing, we have proved the following:
\begin{proposition}
\label{prop:nicenil}
For any nice nilpotent Lie algebra, the associated labeled diagram is nice.
\end{proposition}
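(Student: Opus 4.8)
The plan is to check the four defining conditions of a nice diagram in turn: conditions \ref{enum:condNice1}--\ref{enum:condNice3} are immediate consequences of the definition of a nice basis, while \ref{enum:condNice4} will be extracted from the Jacobi identity by means of Proposition~\ref{prop:jacobi} together with the description of $\ker\mathfrak{a}$ recorded above.

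For \ref{enum:condNice1}, if $i\xrightarrow{j}k$ and $i\xrightarrow{j}k'$ were two arrows, then $e_k$ and $e_{k'}$ would both be nonzero multiples of $[e_i,e_j]$, hence proportional; since they are distinct basis elements, this forces $k=k'$. For \ref{enum:condNice3}, an arrow $i\xrightarrow{j}k$ means $0\neq e_k\in\Span{[e_i,e_j]}$; since $[e_i,e_i]=0$ we get $i\neq j$, and since $[e_j,e_i]=-[e_i,e_j]$ is again a nonzero multiple of $e_k$, the arrow $j\xrightarrow{i}k$ is present. For \ref{enum:condNice2}, suppose $i\xrightarrow{j}k$ and $i'\xrightarrow{j}k$ are two different arrows; then $i\neq i'$, and $de^k$ contains both an $e^{ij}$ and an $e^{i'j}$ term with nonzero coefficient, so $e_j\hook de^k$ has a nonzero component along $e^i$ and along $e^{i'}$, contradicting the requirement that it be a multiple of a single covector.

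The substantive point is \ref{enum:condNice4}. By Proposition~\ref{prop:jacobi}, $\sum_{I,J\in\mathcal{I}_\Delta}c_Ic_JE_{IJ}$ lies in $\ker\mathfrak{a}$; rewriting this sum over double arrows, it equals $\sum_{D}\epsilon_D\,c_{I_D}c_{J_D}\,E_D$, where $D=(k\xrightarrow{i,j}h)$ runs over the double arrows of $\Delta$, $E_D=e^{ij}\otimes e^k\otimes e_h$, $\epsilon_D=\pm1$, and $I_D,J_D$ is the unique pair with $I_D\star J_D=D$. Now fix pairwise distinct nodes $i,j,k$ and a node $h$: among the $E_D$, exactly those indexed by the cyclic triple of double arrows $k\xrightarrow{i,j}h$, $i\xrightarrow{j,k}h$, $j\xrightarrow{k,i}h$ are sent by $\mathfrak{a}$ into the line spanned by $e^{ijk}\otimes e_h$, and each of them to a nonzero multiple of it (since $i,j,k$ are distinct). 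Hence the component of $\mathfrak{a}\big(\sum_{I,J}c_Ic_JE_{IJ}\big)$ along $e^{ijk}\otimes e_h$ is a nonzero constant times a signed sum of the $c_{I_D}c_{J_D}$ over those of the three double arrows that occur in $\Delta$. Since every $c_I$ is nonzero, this sum cannot vanish when exactly one of the three double arrows is present; therefore $\Delta$ never contains exactly one double arrow of a cyclic triple, which is \ref{enum:condNice4}.

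The step I expect to demand the most care is \ref{enum:condNice4}, specifically the bookkeeping underlying the last argument: one must know that the three cyclic terms $e^{ij}\otimes e^k\otimes e_h$, $e^{jk}\otimes e^i\otimes e_h$, $e^{ki}\otimes e^j\otimes e_h$ are distinct basis vectors of $V_{\Delta\otimes\Delta}$ (so that no cancellation occurs before $\mathfrak{a}$ is applied, whereas after applying $\mathfrak{a}$ they become proportional), that $\mathfrak{a}$ does not annihilate any of them (which, via the explicit description of $\ker\mathfrak{a}$, holds because the nodes involved in a double arrow are pairwise distinct), and that the factorisation $D=I_D\star J_D$ of a double arrow is unique (a consequence of \ref{enum:condNice1}). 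Once these points are settled, the conclusion is immediate, and combining the four verifications proves that the labeled diagram of a nice nilpotent Lie algebra is a nice diagram.
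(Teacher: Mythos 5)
Your proof is correct and follows essentially the same route as the paper: conditions \ref{enum:condNice1}--\ref{enum:condNice3} are read off directly from the definition of a nice basis, and \ref{enum:condNice4} is deduced from Proposition~\ref{prop:jacobi} together with the fact that membership in $\ker\mathfrak{a}$ forces the terms $e^{ij}\otimes e^k\otimes e_h$ with $i,j,k$ distinct to occur in pairs or triples. The paper dismisses \ref{enum:condNice1}--\ref{enum:condNice3} as clearly satisfied and states the $\ker\mathfrak{a}$ argument more briefly, but the substance is identical; your extra bookkeeping (uniqueness of the factorisation $D=I_D\star J_D$, distinctness of the three cyclic basis vectors) only makes the same argument explicit.
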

\begin{remark}
\label{rmk:niceisnotnice}
The converse of Proposition~\ref{prop:nicenil} is not true. For an example of a nice diagram that does not correspond to any Lie algebra, see Figure~\ref{figure:no_lie_algebra}. In this case
\[V_\Delta=\{c_{123}e^{12}\otimes e_3 + c_{134}e^{13}\otimes e_4+c_{145}e^{14}\otimes e_5+c_{256}e^{25}\otimes e_6
+c_{346}e^{34}\otimes e_6
+c_{167}e^{16}\otimes e_7
+c_{357}e^{35}\otimes e_7\}.\]
Then
\begin{multline*}
\sum c_Ic_J E_{IJ}=
-c_{346}c_{167}e^{34}\otimes e^1\otimes e_7
-c_{256}c_{167} e^{25}\otimes e^1\otimes e_7
-c_{145}c_{357} e^{14}\otimes e^3\otimes e_7\\
+
c_{123}c_{357} e^{12}\otimes e^5\otimes e_7
-c_{145}c_{256} e^{14}\otimes e^2\otimes e_6
+c_{123}c_{346} e^{12}\otimes e^4\otimes e_6
\end{multline*}
The condition $\mathfrak{a}(c_Ic_J E_{IJ})=0$ is equivalent to the system
\begin{equation}
\label{eqn:Jacobifails}
\begin{gathered}
c_{256}c_{167}=c_{123}c_{357}\\
c_{123}c_{346}=-c_{145}c_{256}\\
c_{346}c_{167}=c_{145}c_{357}.
\end{gathered}
\end{equation}
This system has no solution with all the $c_I$ different from zero, as can be seen by multiplying the first two equations and dividing by the third.

Whilst any element of $V_\Delta$ that satisfies \eqref{eqn:Jacobifails} defines a nice Lie algebra, the associated diagram is $\Delta$ only when each $c_I$ is nonzero; therefore, there does not exist any nice Lie algebra with diagram $\Delta$ (see Proposition~\ref{prop:jacobi}).
\end{remark}

\begin{figure}[thp]
\caption{\label{fig:notverynice}Example of a labeled diagram that satisfies~\ref{enum:condNice1}--\ref{enum:condNice3} but not~\ref{enum:condNice4}}
\subfloat[The diagram $\Delta$]{
\includegraphics[width=0.3\textwidth]{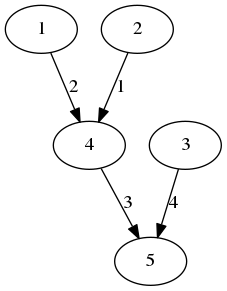}
}\hfill
\subfloat[The associated diagram $\Delta\otimes\Delta$]{
 \includegraphics[width=0.5\textwidth]{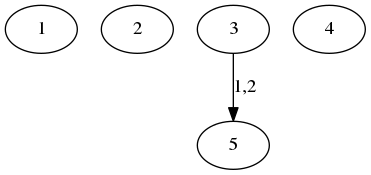}
}
\end{figure}

\begin{figure}[thp]
\caption{\label{figure:no_lie_algebra} A nice diagram that does not correspond to any Lie algebra}
\subfloat[The diagram $\Delta$]{
\includegraphics[width=0.3\textwidth]{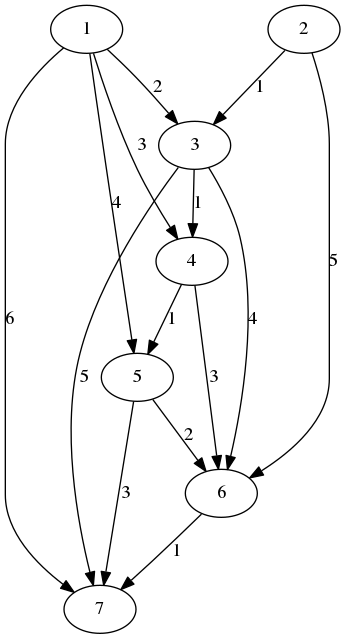}\label{figure:no_lie_algebra_delta}
}
\subfloat[The associated diagram $\Delta\otimes\Delta$]{
 \includegraphics[width=0.6\textwidth]{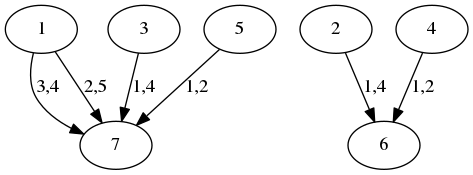}\label{figure:no_lie_algebra_delta_delta}
}
\end{figure}

\begin{remark}
\label{remark:surjective_no_jacobi}
Nice diagrams $\Delta$ such that $M_\Delta$ is surjective do not have any double arrows. Indeed, if $i\xrightarrow{j,k} v$ is a double arrow, condition~\ref{enum:condNice4} implies that some other double arrow $i\xrightarrow{h,m} v$ must exist. This establishes a relation of linear dependence between the rows of $M_\Delta$ corresponding to the brackets $[e_j,e_k]$, $[e_i,[e_j,e_k]]$, $[e_h,e_m]$ and $[e_i,[e_h,e_m]]$. A similar argument was used in \cite[Lemma 2.8]{KadiogluPayne:Computational} to prove a similar result in the special case that the Nikolaevsky derivation is simple.
\end{remark}
% \pagebreak

The group $\Aut(\Delta)$ acts linearly on $V_\Delta$ via
\[\sigma \cdot E_I = E_{\sigma^{-1}I};\]
this induces an action of $\Aut(\Delta)\ltimes D_n$. The following is obvious:
% \nopagebreak[4]
\begin{proposition}
\label{prop:EquivalentIFFautOrDn}
Two elements in $V_\Delta$ that satisfy the conditions of Proposition~\ref{prop:jacobi} define equivalent nice Lie algebras if and only if they are related by an element of $\Aut(\Delta)\ltimes D_n$.
\end{proposition}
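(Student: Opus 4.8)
The plan is to unwind the definitions on both sides and check the two implications separately. Write $\g_c$ for the nice Lie algebra on $\R^n$ determined by $c\in V_\Delta$ via Proposition~\ref{prop:jacobi}, equipped with the standard nice basis $\mathcal{B}=\{e_1,\dotsc,e_n\}$; by construction each $\g_c$ has labeled diagram $\Delta$. We must show that $\g_c$ and $\g_{c'}$ are equivalent in $\mathcal{N}$ precisely when $c' = g\cdot c$ for some $g\in\Aut(\Delta)\ltimes D_n$.

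For the "if" direction, suppose first $g\in D_n$. Then $g$ acts on $\R^n$ as a diagonal change of basis sending $e_i$ to a nonzero multiple of itself; this is by definition a morphism of $\mathcal{N}$, invertible, hence an equivalence, and a direct computation with $de^k = \sum c_{ijk}e^{ij}$ shows that the new structure constants are exactly those of $g\cdot c$ (this is the content of the commuting square with $e^{M_\Delta}$ in the excerpt). Next suppose $\sigma\in\Aut(\Delta)\subset\Sigma_n$. Then the permutation map $e_i\mapsto e_{\sigma(i)}$ of $\R^n$ carries the bracket of $\g_c$ to a bracket whose diagram is again $\Delta$ (because $\sigma$ is a diagram automorphism), and tracking the structure constants through the relabeling shows the image is $\g_{\sigma\cdot c}$ with the action $\sigma\cdot E_I = E_{\sigma^{-1}I}$ as defined just above the proposition. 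Since these two types of maps generate $\Aut(\Delta)\ltimes D_n$ and composition of equivalences is an equivalence, any $g\in\Aut(\Delta)\ltimes D_n$ yields an equivalence $\g_c\to\g_{g\cdot c}$.

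For the "only if" direction, let $\phi\colon\g_c\to\g_{c'}$ be an equivalence. By definition of morphisms in $\mathcal{N}$, $\phi$ sends each $e_i$ to a nonzero multiple $\lambda_i e_{\tau(i)}$ of a basis element, and since $\phi$ is invertible $\tau\in\Sigma_n$. Write $\phi = D\circ P_\tau$ where $P_\tau$ is the permutation map $e_i\mapsto e_{\tau(i)}$ and $D=\diag(\mu_1,\dotsc,\mu_n)\in D_n$ is the remaining diagonal part. Because $\phi$ is a Lie algebra isomorphism carrying $\mathcal{B}$ to a nice basis with the same multiplication pattern, $P_\tau$ must carry the diagram of $\g_c$, namely $\Delta$, to the diagram of $\g_{c'}$, also $\Delta$; that is, $\tau\in\Aut(\Delta)$. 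Then the "if" direction already tells us that $P_\tau$ realizes the equivalence $\g_c\to\g_{\tau\cdot c}$ and $D$ realizes $\g_{\tau\cdot c}\to\g_{g\cdot c}$ for the appropriate $g = D\cdot\tau$, and since $\phi$ lands in $\g_{c'}$ we conclude $c' = g\cdot c$ with $g\in\Aut(\Delta)\ltimes D_n$.

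The only real subtlety — and the reason the excerpt can call this "obvious" — is the bookkeeping identifying the induced action on structure constants with the stated formulas $\sigma\cdot E_I = E_{\sigma^{-1}I}$ and the exponentiated root matrix action, together with the observation that a morphism of $\mathcal{N}$ between two objects with diagram $\Delta$ forces the permutation part to be a diagram automorphism. I would spell out the latter point carefully: if $e_i\xrightarrow{e_j}e_k$ in $\g_c$, i.e. $[e_i,e_j]$ is a nonzero multiple of $e_k$, then applying $\phi$ gives $[e_{\tau(i)},e_{\tau(j)}]$ a nonzero multiple of $e_{\tau(k)}$ in $\g_{c'}$, so $e_{\tau(i)}\xrightarrow{e_{\tau(j)}}e_{\tau(k)}$ in the diagram of $\g_{c'}$; since both diagrams equal $\Delta$, the bijection $\tau$ preserves labeled arrows, hence $\tau\in\Aut(\Delta)$. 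Everything else is the routine diagonal-change-of-basis computation, so I would state it as such and omit the indices.
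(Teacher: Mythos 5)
Your proposal is correct, and it is exactly the verification the authors have in mind: the paper gives no proof at all (it declares the proposition ``obvious''), the point being that an equivalence of nice Lie algebras is by definition a composition of a permutation with a diagonal map, the permutation part is forced to be a labeled-diagram automorphism because the diagram is an equivalence invariant, and the induced actions on structure constants are precisely the ones defined on $V_\Delta$. Your write-up supplies these routine checks correctly, including the one genuinely substantive observation that $\tau\in\Aut(\Delta)$.
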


\section{Classification algorithms}
\label{sec:algorithm}
In this section we illustrate algorithms to classify nice diagrams with $n$ nodes and nice Lie algebras of dimension $n$. We will assume nodes are numbered from $1$ to $n$, and fix a \emph{total} ordering on $\mathcal{P}(\{1,\dotsc, n\})$, for example through the canonical identification with numbers from $0$ to $2^{n}-1$; iterating through subsets is then a matter of iterating through integers.

The starting observation is the following:
\begin{proposition}
\label{prop:construct_diagrams}
Let $s>1$, $n=a_2+\dots + a_s$. Up to equivalence, any diagram of type $(a_1,\dotsc, a_s)$ can be obtained from a diagram of type $(a_2,\dots, a_s)$ by the following procedure:
\begin{itemize}
\item add $a_1$ nodes labeled $n,\dots, n+a_1$.
\item choose appropriate subsets $A_1\leq A_2\leq \dots \leq A_{a_1}$ of $\{1,\dots, n\}$ such that $A_1\cup \dots \cup A_{a_1}=\{1,\dotsc, n\}$.
\item for each $1\leq i\leq a_1$, add an arrow $(n+i)\to j$ whenever $j\in A_i$.
\end{itemize}
\end{proposition}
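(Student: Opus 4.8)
The plan is to \emph{invert} the procedure. Let $\Delta$ be a diagram of type $(a_1,\dotsc,a_s)$; by relabelling we may assume its nodes are $\{1,\dotsc,N\}$ with $N=a_1+\dots+a_s$. Consider the filtration $N_0\supset N_1\supset\dots\supset N_s=\emptyset$; the bottom layer $B:=N_0\setminus N_1$ has exactly $a_1$ elements and consists precisely of the nodes with no incoming arrow. The first step is to show that the induced subgraph $\Delta':=\Delta|_{N_1}$ is a diagram of type $(a_2,\dotsc,a_s)$. The key observation is that the arrows of $\Delta'$ are exactly the arrows of $\Delta$ whose source lies in $N_1$: such an arrow has destination in $N_2\subseteq N_1$, so both endpoints lie in $N_1$, while conversely an arrow with both endpoints in $N_1$ has its source in $N_1$. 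Consequently the filtration of $\Delta'$ is obtained from that of $\Delta$ by deleting $N_0$, i.e. $N_j(\Delta')=N_{j+1}(\Delta)$ for all $j$; comparing cardinalities gives the type $(a_2,\dotsc,a_s)$.

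Second, every arrow of $\Delta$ that is not an arrow of $\Delta'$ must have its source in $B$ (otherwise it would be an arrow of $\Delta'$) and its destination in $N_1$ (being the head of an arrow). Hence $\Delta$ is recovered from $\Delta'$ by re-inserting the $a_1$ nodes of $B$ together with the arrows issuing from them, all of which point into $N_1$. Now choose any bijection $N_1\to\{1,\dotsc,n\}$, where $n=a_2+\dots+a_s=|N_1|$, and any bijection $B\to\{n+1,\dotsc,n+a_1\}$; applying this relabelling produces an equivalent diagram in which $\Delta'$ occupies the nodes $\{1,\dotsc,n\}$ and each new node $n+i$ emits arrows exactly to a subset $A_i\subseteq\{1,\dotsc,n\}$. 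A further permutation of the new nodes — again an equivalence of diagrams — arranges $A_1\le A_2\le\dots\le A_{a_1}$ in the fixed total order on $\mathcal{P}(\{1,\dotsc,n\})$. This is precisely the output of the procedure applied to $\Delta'$ and the sets $A_i$.

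What remains — and what I expect to be the crux — is the covering condition $A_1\cup\dots\cup A_{a_1}=\{1,\dotsc,n\}$, equivalently: \emph{every} node of $N_1$ is the head of some arrow whose tail lies in $B$. It is here that niceness is used; the statement would fail for arbitrary directed acyclic graphs, the chain $1\to2\to3$ being a diagram of type $(1,1,1)$ whose middle layer is not reached in a single step from the bottom one. When $\Delta$ comes from a nice nilpotent Lie algebra $\g$ the argument is short: writing $V$ for the span of $\{e_v\mid v\in B\}$ we have $\g=V\oplus\g^1$, and nilpotency gives $[V,\g]=\g^1$ by the usual induction ($\g^1=[V,\g]+\g^2$, iterate, use $\g^k=0$ for $k\gg0$); since $\g^1$ is adapted to the nice basis (Lemma~\ref{lemma:adapted}) and $[V,\g]$ is spanned by the brackets $[e_v,e_x]$ with $v\in B$, each node of $N_1$ is a multiple of such a bracket, hence the head of an arrow from $B$. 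Purely combinatorially one argues by contradiction: among the nodes of $N_1$ not reached in one step from $B$ pick $k$ minimal in a topological order; all in-neighbours of $k$ then lie in the one-step image of $B$, and combining an arrow $i\xrightarrow{j}k$ with an arrow $v\to i$, $v\in B$, produces a double arrow into $k$, whereupon condition~\ref{enum:condNice4} forces a companion double arrow that either exhibits an arrow from $B$ directly into $k$ (contradicting the choice of $k$) or yields a fresh in-neighbour of $k$ on which to repeat; finiteness of the in-neighbourhood of $k$ closes the loop. The first two steps are essentially bookkeeping with the filtration and with which arrows survive restriction; the covering condition, together with the care needed to see that passing to the underlying unlabelled diagram loses nothing relevant, are the only real points.
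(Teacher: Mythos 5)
Your first two paragraphs are exactly the bookkeeping that the paper's proof leaves implicit, and they are correct: $B=N_0\setminus N_1$ has $a_1$ elements, $\Delta':=\Delta|_{N_1}$ has type $(a_2,\dotsc,a_s)$ with $N_j(\Delta')=N_{j+1}(\Delta)$, and $\Delta$ is recovered from $\Delta'$ by re-attaching the nodes of $B$ together with their outgoing arrows, reordered so that $A_1\leq\dots\leq A_{a_1}$. Where you part company with the paper is the covering condition, and here your own counterexample is telling you something different from what you conclude. The paper's proof asserts that $A_1\cup\dots\cup A_{a_1}=\{1,\dotsc,n\}$ is \emph{required for the type to be} $(a_1,\dotsc,a_s)$; what the type actually forces is only $A_1\cup\dots\cup A_{a_1}\supseteq\{1,\dotsc,n\}\setminus N_1(\Delta')$, since a node $j$ lies in $N_1(\Delta)$ as soon as it has \emph{some} incoming arrow, whether from $B$ or from another node of $N_1$. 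Your chain $1\to2\to3$ shows precisely that the literal equality is not forced by the type; read with the weaker condition, the proposition is true for \emph{all} diagrams and its proof is nothing more than your first two paragraphs plus the paper's remark about permuting the new nodes. The right diagnosis is therefore a slip in the displayed condition, not a missing niceness hypothesis.

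Your rescue via niceness cannot be what is intended, even though the computation $\g^1=[V,\g]$ is correct: Proposition~\ref{prop:construct_diagrams} is applied \emph{recursively} in Step~1, and the induced diagram on $N_1$ of a nice Lie algebra's diagram need not satisfy the strong covering condition at its own level. Concretely, the filiform algebra \texttt{5321:1}, $(0,0,e^{12},e^{13},e^{14})$, has $N_1=\{3,4,5\}$ and $\Delta|_{N_1}$ equal to the chain $3\to4\to5$; under your reading this type-$(1,1,1)$ diagram is never produced by the procedure, hence neither is the diagram of \texttt{5321:1} --- yet that algebra appears in Table~\ref{table:5DimNiceGong}. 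The classification tables are only consistent with the weak condition $A_1\cup\dots\cup A_{a_1}\cup N_1(\Delta')=\{1,\dotsc,n\}$. Separately, your purely combinatorial argument from~\ref{enum:condNice4} is not complete as written: after~\ref{enum:condNice4} hands you a companion double arrow you say it ``yields a fresh in-neighbour of $k$ on which to repeat,'' but you show neither that the configuration obtained is genuinely new nor that the process must terminate in an arrow from $B$ into $k$ rather than cycling among in-neighbours already considered (the companion arrow $w\xrightarrow{j,v}k$ only gives an arrow $w\to k$ with $w$ a label, which need not lie in $B$). Since the weak reading makes both rescue arguments unnecessary, I would drop them and instead record the chain as evidence that the stated condition should be corrected.
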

\begin{proof}
The condition $A_1\cup \dots \cup A_{a_1}=\{1,\dotsc, n\}$ is required for the type to be $(a_1,\dotsc, a_s)$.

Notice that interchanging $A_i$ with $A_j$ has the effect of interchanging the corresponding nodes $n+i$ and $n+j$; this explains why we can assume  $A_1\leq A_2\leq \dots \leq A_{a_1}$.
\end{proof}

\subsection{Enumerating nice diagrams}
For each $n$, it is easy to list the possible ways of writing $n$ as a sum of positive integers; for each such possibility $n=a_1+\dotsc + a_s$, we can  classify nice diagrams of type $(a_1,\dotsc, a_s)$ with the following algorithm.

\textbf{Step 1}. \emph{Classify diagrams of type $(a_1,\dotsc, a_s)$}.

There is a unique diagram of type $(a_s)$; working backwards, we apply  $s-1$ times the method of Proposition~\ref{prop:construct_diagrams}, iterating through all possible subsets at each step.

\textbf{Step 2}. \emph{Remove the diagrams where some nodes have an odd number of incoming arrows}.

This step is justified by the fact that any labeled diagram has an even number of incoming arrows at each node, because of \ref{enum:condNice3}.

\textbf{Step 3}. \emph{Eliminate isomorphic diagrams}.

Recall that two diagrams $\Delta,\Delta'$ are isomorphic if there are compatible bijections $N(\Delta)\to N(\Delta')$, $E(\Delta)\to E(\Delta')$; this implies that the resulting (families of) nice Lie algebras will be equivalent. The diagrams obtained from steps 1--2 may in general contain more representatives in a single isomorphism class. In order to eliminate them effectively (see subsection~\ref{subsection:implementation_notes}), we introduce an appropriate hash function, i.e. a map $\hash\colon N(\Delta)\to\Z$ that is invariant under automorphisms of $\Delta$, and define \[\hash(\Delta)=\sum_{e\in N(\Delta)} \hash(e).\]
By construction, $\Delta$ and $\Delta'$ can only be isomorphic if $\hash(\Delta)=\hash(\Delta')$, and a bijection $f\colon N(\Delta)\to N(\Delta')$ can only be an isomorphism if $f\circ\hash= \hash$.  Thus, for each pair of diagrams with the same hash code, we iterate through hash-preserving bijections and verify whether they induce diagram isomorphisms.

\textbf{Step 4}. \emph{For each diagram, compute the possible labelings}.

The idea is adding labels in pairs, iteratively, until the diagram is fully labeled. Formally, we consider \emph{partially labeled diagrams}, i.e. diagrams $\Delta$ with a function from the set of arrows $E(\Delta)$ to $V(\Delta)\cup \{\emptyset\}$, where the value $\emptyset$ represents ``no label''. A partial labeling that never takes the value $\emptyset$ will be called a \emph{complete labeling}.

To each partially labeled diagram $\Delta$, associate a set $C(\Delta)$ of completely labeled diagrams recursively as follows.

For each node $j$ denote by $V_j$ the set of nodes $v$ such $v\to j$ is unlabeled in $\Delta$. If all $V_j$ are empty, then $\Delta$ is completely labeled; set $C(\Delta)=\{\Delta\}$. Otherwise, consider the nodes for which $V_j$ is nonempty, let $j$ be the minimum among nodes that minimize $\abs{V_j}\neq0$ and add labels to two arrows ending at $j$ as follows. Let $i$ be the minimum of $V_j$. Let $W$ be the set of those $v\in V_j$ such that $v$ has no outgoing arrow with label $i$ and $i$ has no outgoing arrow with label $v$ in $\Delta$. If $W$ is empty, it is not possible to complete the labeling of $\Delta$, so set $C(\Delta)=\emptyset$; notice that choosing $j$ that minimizes $\abs{V_j}$ will generally make this condition occur earlier in the recursion. Otherwise, for each $w\in W$, let $\Delta_w$ be the  partially labeled diagrams obtained from $\Delta$ by adding the labels $w\xrightarrow{i} j$, $i\xrightarrow{w} j$; set recursively \[C(\Delta)=\bigcup_{w\in W} C(\Delta_w).\]
It is clear that the recursion has maximum depth $\abs{E(\Delta)}/2$, and that $C(\Delta)$ is the set of all the complete labelings of $\Delta$.

\textbf{Step 5}. \emph{Eliminate equivalent diagrams}.

This step is made necessary by the fact that Step 4 may produce different, equivalent labelings. We therefore proceed as in Step 3 to eliminate duplicates inside each $C(\Delta)$, with the only difference that the bijections $f$ that are considered are isomorphisms of labeled diagrams, i.e. they act compatibly on nodes, edges and labels.

\textbf{Step 6}. \emph{Eliminate diagrams for which~\ref{enum:condNice4} is violated}.

For each diagram $\Delta$ and each node $ v\in N(\Delta)$, we list all double arrows and apply the definition.

\subsection{Classifying nice Lie algebras}
\label{sec:classifying}
In order to determine the nice Lie algebras with associated diagram $\Delta$, we need to impose the conditions of Proposition~\ref{prop:jacobi}, i.e. the Jacobi identity. Since we are ultimately interested in classifying Lie algebras up to equivalence, it is convenient to factor out equivalence before imposing the Jacobi identity.

Consider the action of $\GL(n,\R)$ on  $\Lambda^2(\R^n)^*\otimes \R^n$; we will denote it by juxtaposition, so that $D_n$ as a subgroup of $\GL(n,\R)$ acts via $gc=e^{M_\Delta}(g)c$.  Isomorphism classes of Lie algebras with diagram $\Delta$ are elements of
\[(\GL(n,\R) \mathring{V}_\Delta )/\GL(n,\R),\]
where $\mathring{V}_\Delta$ is the complement of the union of the coordinate hyperplanes, i.e. the subset where each coordinate $c_I$ is nonzero.

Denoting by $\Z^*=\{\pm1\}$ the group of invertible integers, we can consider the restriction
\[e^{M_\Delta}\colon (\Z^*)^n\to(\Z^*)^{\mathcal{I}_\Delta}.\]
Denoting by $\logsign\colon \Z^*\to\Z_2$ the natural isomorphism, this restriction is identified with $M_{\Delta,2}\colon\Z_2^n\to\Z_2^{\mathcal{I}_\Delta}$.

It will be convenient to work with a fundamental domain $\mathring{W}\subset \mathring{V}_\Delta$, namely a submanifold that intersects each orbit of the action of $D_n$ in a single point. In fact, $\mathring{W}$ can be obtained by intersecting  $\mathring{V}_\Delta$ with a finite union of parallel affine spaces in $V_\Delta$. Recalling that rows of $M_\Delta$ are parametrized by $\mathcal{I}_\Delta$, we have:
\begin{proposition}
\label{prop:fundamental_domain}
Choose $\mathcal{J}_{\Delta,2}\subset\mathcal{J}_{\Delta}\subset \mathcal{I}_\Delta$ so that $\mathcal{J}_{\Delta,2}$ parametrizes a maximal set of $\Z_2$-linearly independent rows of $M_{\Delta,2}$ and $\mathcal{J}_{\Delta}$ parametrizes a maximal set of $\R$-linearly independent rows of $M_{\Delta}$. Set
\[
W=\left\{\sum c_I E_I\in V_\Delta\mid  c_I=1\ \forall I\in \mathcal{J}_{\Delta,2},\ c_I= \pm 1 \ \forall I\in \mathcal{J}_{\Delta}\setminus \mathcal{J}_{\Delta,2}\right\}.\]
Then $\mathring{W}=W\cap  \mathring{V}_\Delta$ is a fundamental domain in $\mathring{V}_\Delta$ for the action of $D_n$.
\end{proposition}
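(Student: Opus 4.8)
The plan is to exploit the factorization $D_n\cong (\R_{>0})^n\times (\Z^*)^n$ coming from the decomposition $\R^*=\R_{>0}\times\{\pm1\}$ of each diagonal entry, and to observe that on $\mathring{V}_\Delta$ the two factors act independently. Writing a point of $\mathring{V}_\Delta$ as the pair $\bigl((\log\abs{c_I})_{I},(\logsign(c_I))_{I}\bigr)\in\R^{\mathcal{I}_\Delta}\times\Z_2^{\mathcal{I}_\Delta}$, an element $\exp(t)\in(\R_{>0})^n$ translates the first component by $M_\Delta t$ (since the row of $M_\Delta$ indexed by $I$ represents $\alpha_I$) and fixes the second, while a sign vector $\bar\epsilon\in\Z_2^n$ translates the second component by $M_{\Delta,2}\bar\epsilon$ and leaves every $\abs{c_I}$ untouched. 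In these coordinates the condition $c\in\mathring{W}$ reads: the first component vanishes on $\mathcal{J}_\Delta$ and the second vanishes on $\mathcal{J}_{\Delta,2}$, the remaining coordinates being unconstrained apart from $c_I\neq0$, which holds throughout $\mathring{V}_\Delta$. So the statement splits into a ``real part'' about $M_\Delta$ and a ``mod $2$ part'' about $M_{\Delta,2}$, to be treated by the same argument.

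For \textbf{existence}, given $c\in\mathring{V}_\Delta$ I would first normalize the absolute values on $\mathcal{J}_\Delta$. By maximality, the rows of $M_\Delta$ indexed by $\mathcal{J}_\Delta$ form a basis of the row space of $M_\Delta$; in particular the linear map $\R^n\to\R^{\mathcal{J}_\Delta}$, $t\mapsto\bigl((M_\Delta t)_I\bigr)_{I\in\mathcal{J}_\Delta}$, is onto, so there is $t$ with $\log\abs{c_I}+(M_\Delta t)_I=0$ for all $I\in\mathcal{J}_\Delta$, and after applying $\exp(t)\in D_n$ we may assume $\abs{c_I}=1$ for $I\in\mathcal{J}_\Delta$. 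Next, since the rows of $M_{\Delta,2}$ indexed by $\mathcal{J}_{\Delta,2}$ form a basis of the row space of $M_{\Delta,2}$, the map $\Z_2^n\to\Z_2^{\mathcal{J}_{\Delta,2}}$ is onto, so we can choose $\bar\epsilon$ with $\logsign(c_I)+(M_{\Delta,2}\bar\epsilon)_I=0$ for all $I\in\mathcal{J}_{\Delta,2}$; this changes no absolute value, so after applying it the point lies in $W\cap\mathring{V}_\Delta=\mathring{W}$ (the coordinates in $\mathcal{J}_\Delta\setminus\mathcal{J}_{\Delta,2}$ have become $\pm1$, the rest are still nonzero).

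For \textbf{uniqueness}, suppose $c,c'\in\mathring{W}$ with $c'=gc$, and write $g=\exp(t)\bar\epsilon\in(\R_{>0})^n\times(\Z^*)^n$. Comparing absolute values on $\mathcal{J}_\Delta$ gives $(M_\Delta t)_I=0$ for all $I\in\mathcal{J}_\Delta$, hence $M_\Delta t=0$ because those rows span the whole row space; but then $\exp(t)$ scales each $E_I$ by $e^{(M_\Delta t)_I}=1$, i.e.\ acts trivially on $V_\Delta$. Likewise, comparing signs on $\mathcal{J}_{\Delta,2}$ gives $(M_{\Delta,2}\bar\epsilon)_I=0$ for $I\in\mathcal{J}_{\Delta,2}$, hence $M_{\Delta,2}\bar\epsilon=0$, so $\bar\epsilon$ too acts trivially; therefore $c'=gc=c$.

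It remains to dispatch two routine points. The nested choice $\mathcal{J}_{\Delta,2}\subseteq\mathcal{J}_\Delta$ demanded by the statement is legitimate because a family of integer rows whose reductions mod $2$ are linearly independent over $\Z_2$ is automatically linearly independent over $\R$ (clear denominators and reduce mod $2$), so a $\Z_2$-basis of the rows of $M_{\Delta,2}$ extends to an $\R$-basis of the rows of $M_\Delta$; and $\mathring{W}$ is a submanifold since $W$ is a finite disjoint union of affine subspaces of $V_\Delta$ (one per sign pattern on $\mathcal{J}_\Delta\setminus\mathcal{J}_{\Delta,2}$) and $\mathring{V}_\Delta$ is open. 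I do not expect a serious obstacle: the whole argument is bookkeeping, and the one load-bearing fact, used for both existence and uniqueness, is simply that a \emph{maximal} independent set of rows is a spanning set, so that the submatrix of $M_\Delta$ (resp.\ $M_{\Delta,2}$) on $\mathcal{J}_\Delta$ (resp.\ $\mathcal{J}_{\Delta,2}$) has the same rank and the same kernel as the full matrix.
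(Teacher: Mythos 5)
Your proof is correct and follows essentially the same route as the paper's: existence via surjectivity of the induced maps onto the coordinates indexed by $\mathcal{J}_\Delta$ (for absolute values) and $\mathcal{J}_{\Delta,2}$ (for signs), and uniqueness by observing that agreement on those coordinates forces $M_\Delta(\log g)=0$ and $M_{\Delta,2}(\logsign\epsilon)=0$ by maximality. The closing remark that $\Z_2$-independence implies $\R$-independence is also exactly the observation the paper makes immediately after the proposition.
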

\begin{proof}
Composing with the obvious projection, one obtains a surjective homomorphism
\[(\Z^*)^n \xrightarrow{e^{M_\Delta}} (\Z^*)^{\mathcal{I}_\Delta} \to (\Z^*)^{\mathcal{J}_{\Delta,2}}.\]
It follows that up to the action of  $e^{M_{\Delta}}(D_n)$ any element $\sum c_IE_I$ of $\mathring{V} _\Delta$ can be assumed to satisfy $c_I>0$ whenever $I\in \mathcal{J}_{\Delta,2}$. Similarly, the composition
\[(\R^+)^n \xrightarrow{e^{M_\Delta}} (\R^+)^{\mathcal{I}_\Delta} \to (\R^+)^{\mathcal{J}_{\Delta}}\]
is surjective, and it follows that any coefficient $c_I$ can be normalized to $\pm1$  for $I\in \mathcal{J}_{\Delta}$.

It follows that $\mathring{W}$ as defined in the statement intersects every orbit. Now suppose the orbit of some $c$ in $\mathring{W}$ intersects $\mathring{W}$ in a point $c'=e^{M_\Delta}(\epsilon g) c$, where $\epsilon$ in $(\Z^*)^n$,  $g\in (\R^+)^n$. By definition of $\mathring{W}$,   the components $c_I$ and $c'_I$ coincide up to sign for each $I\in \mathcal{J}_\Delta$. Thus, each of the rows parametrized by $\mathcal{J}_\Delta$ annihilates $\log g$, and by maximality $M_\Delta(\log g)=0$. Similarly,  each of the rows parametrized by $\mathcal{J}_{\Delta,2}$ annihilates $\logsign \epsilon$; by maximality, this implies that $M_{\Delta,2}(\logsign \epsilon)$ is zero, i.e. $c'=e^{M_\Delta}(\epsilon)c=c$.
\end{proof}
Notice that it is no loss of generality to assume that $\mathcal{J}_{\Delta,2}\subset\mathcal{J}_\Delta$: rows that are linearly independent over $\Z_2$ are necessarily linearly independent over $\Q$, hence $\R$.

\begin{remark}
\label{remark:finiteunionofaffinespaces}
The characterization of $D_n$-orbits in $V_\Delta$ in terms of the root matrix was already given in \cite[Theorem 3.8]{Payne:Applications} and \cite[Corollary 3.5]{Payne:Methods}; the main improvement of Proposition~\ref{prop:fundamental_domain} is the explicit description of the fundamental domain $\mathring{W}$ as an open set in a finite union of affine spaces, which makes it possible to compute it with an algorithm.
\end{remark}

\begin{remark}
It is clear from Proposition~\ref{prop:fundamental_domain} (see also \cite[Theorem 3.8]{Payne:Applications}) that diagrams $\Delta$ of \emph{surjective type}, namely those for which $M_{\Delta,2}$ is surjective, are interesting from the point of view of Lie algebra classification, as in this situation the nice Lie algebra is determined uniquely by the diagram. These diagrams are also useful in the context of the construction of Einstein metrics (see \cite{ContiRossi:EinsteinNice}).

However the diagram can determine uniquely the nice Lie algebra even if $M_\Delta$ is not surjective (see Example~\ref{ex:631:6}).
\end{remark}

Potentially, each connected component in $W$ gives rise to a new family of Lie algebras with diagram $\Delta$. Recall that $\Aut(\Delta)\ltimes D_n$ acts naturally on $V_\Delta$; if $w\in V_\Delta$ defines a nice Lie algebra, then by Proposition~\ref{prop:EquivalentIFFautOrDn} it is equivalent to any element in its orbit $\{g\cdot w\mid g\in \Aut(\Delta)\ltimes D_n\}$.
The induced action of $\Aut(\Delta)$ on $\mathring{W}\cong \mathring{V}_\Delta/D_n$ will be denoted by juxtaposition.

Observing that the action of $\Aut(\Delta)$ on $\mathring{W}$ maps connected components to connected components, we can eliminate repeated families from our classification as follows:
\begin{theorem}
\label{thm:inequivalent}
Let $\Delta$ be a nice diagram and define $\mathring{W}$ as in Proposition~\ref{prop:fundamental_domain}. Let $\Aut(\Delta)$ act on the set of  connected components of $\mathring{W}$, and choose connected components $W_1,\dotsc, W_k$, one for each orbit. Let $B_j\subset W_j$ be the subset defined by the Jacobi equations. Then each element of $B_j$ defines a nice Lie algebra with diagram $\Delta$; up to equivalence, any nice Lie algebra with diagram $\Delta$ is obtained in this way. Moreover, if $j\neq k$, elements of $B_j$ and $B_k$ determine inequivalent nice Lie algebras.
\end{theorem}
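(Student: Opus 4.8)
The plan is to unwind the definitions of the objects involved and then reduce everything to Proposition~\ref{prop:jacobi} and Proposition~\ref{prop:EquivalentIFFautOrDn}. First I would recall that $\mathring{W}$ is a fundamental domain for the $D_n$-action on $\mathring{V}_\Delta$ (Proposition~\ref{prop:fundamental_domain}), so that $\mathring{W}\cong \mathring{V}_\Delta/D_n$ as a set, and that $\Aut(\Delta)$ acts on $\mathring{W}$ through this identification, preserving the connected component decomposition (since its action on $V_\Delta$ is linear, hence a homeomorphism). By Proposition~\ref{prop:jacobi}, the elements of $\mathring{V}_\Delta$ satisfying the Jacobi equations are exactly the nice Lie algebra structures with diagram $\Delta$; intersecting with the fundamental domain $\mathring{W}$ and with each chosen connected component $W_j$, the subset $B_j$ collects precisely those structures that additionally lie in $W_j$. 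So each element of $B_j$ visibly defines a nice Lie algebra with diagram $\Delta$.

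Next I would prove that every nice Lie algebra with diagram $\Delta$ appears. Given such an algebra, represented by some $c\in\mathring{V}_\Delta$ satisfying the Jacobi identity, Proposition~\ref{prop:fundamental_domain} lets us move $c$ into $\mathring{W}$ by a $D_n$-element without changing the equivalence class (Proposition~\ref{prop:EquivalentIFFautOrDn}); the Jacobi identity is $D_n$-equivariant, so the new representative still satisfies it. The resulting point lies in some connected component of $\mathring{W}$, and by the choice of $W_1,\dotsc,W_k$ as orbit representatives under $\Aut(\Delta)$, there is some $\sigma\in\Aut(\Delta)$ carrying that component to one of the $W_j$. Since $\Aut(\Delta)$ acts by equivalences (Proposition~\ref{prop:EquivalentIFFautOrDn}) and commutes with the Jacobi condition, the image point lies in $B_j$ and is equivalent to the original algebra.

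The heart of the matter, and the step I expect to be the main obstacle, is the inequivalence claim: if $b\in B_j$ and $b'\in B_k$ with $j\neq k$, then they are inequivalent. Suppose they were equivalent; by Proposition~\ref{prop:EquivalentIFFautOrDn} there is $g\in\Aut(\Delta)\ltimes D_n$ with $g\cdot b = b'$. Write $g=\sigma h$ with $\sigma\in\Aut(\Delta)$ and $h\in D_n$. Then $\sigma\cdot(h\cdot b)=b'$, where $h\cdot b\in\mathring{V}_\Delta$; but $b,b'\in\mathring{W}$ and $\mathring{W}$ is a fundamental domain, so the point $b''$ obtained by projecting $h\cdot b$ back into $\mathring{W}$ via $D_n$ is just $b$ itself (the $D_n$-orbit of $b$ meets $\mathring{W}$ only in $b$). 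Since the $\Aut(\Delta)$-action on $\mathring{W}$ is defined precisely through the identification $\mathring{W}\cong\mathring{V}_\Delta/D_n$, the relation $\sigma\cdot(h\cdot b)=b'$ descends to $\sigma\cdot b = b'$ in $\mathring{W}$, i.e. $\sigma$ maps the $\Aut(\Delta)$-orbit of $b$ to that of $b'$. Hence $b$ and $b'$ lie in the same $\Aut(\Delta)$-orbit in $\mathring{W}$, so $\sigma$ maps the connected component containing $b$ to the one containing $b'$; as $b\in W_j$ and $b'\in W_k$ and these were chosen one per $\Aut(\Delta)$-orbit of components, this forces $W_j=W_k$, contradicting $j\neq k$. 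The delicate point requiring care is tracking how the semidirect product $\Aut(\Delta)\ltimes D_n$ acts compatibly with the quotient projection $\mathring{V}_\Delta\to\mathring{V}_\Delta/D_n\cong\mathring{W}$ — i.e. verifying that the $D_n$-factor of any equivalence can be absorbed into the fundamental-domain normalization without disturbing the $\Aut(\Delta)$-part, which is exactly where the fundamental-domain property and the semidirect-product structure must be used in tandem.
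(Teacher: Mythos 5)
Your proposal is correct and follows essentially the same route as the paper, which proves this theorem in two sentences by the same reduction to Proposition~\ref{prop:jacobi}, Proposition~\ref{prop:fundamental_domain} and Proposition~\ref{prop:EquivalentIFFautOrDn}: points of $\mathring{W}$ define equivalent nice Lie algebras iff they lie in the same orbit of the induced $\Aut(\Delta)$-action on $\mathring{W}\cong\mathring{V}_\Delta/D_n$. Your expanded treatment of the semidirect-product bookkeeping (absorbing the $D_n$-factor of an equivalence into the fundamental-domain normalization) is exactly the content the paper leaves implicit; only note that the component-preservation should be justified by continuity of the induced quotient action rather than by linearity of the action on $V_\Delta$, since $\Aut(\Delta)$ does not preserve $W$ itself.
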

\begin{proof}
Any nice Lie algebras with diagram $\Delta$ is determined by an element of $\mathring{V}_\Delta$ satisfying the Jacobi identity; two such elements determine equivalent nice Lie algebras when they are in the same orbit under the action of $\Aut(\Delta)\ltimes D_n$. Since $\mathring{W}$ is a fundamental domain for the action of $D_n$, two points of $\mathring{W}$ define equivalent nice Lie algebras if and only if they are in the same orbit under $\Aut(\Delta)$.
\end{proof}

\begin{remark}
Notice that there is no action of $\Aut(\Delta)$ on $W$, because diagram automorphisms do not generally preserve $\mathcal{J}_{\Delta,2}$. This is the reason to consider connected components of $\mathring{W}$, even though we are ultimately interested in removing redundant components from $W$.
\end{remark}

In order to compute the action of $\Aut(\Delta)$ on the set of connected components, we employ the following:
\begin{proposition}
\label{prop:ActionOfAutDelta}
Given $\sigma$ in $\Aut(\Delta)$ and $\epsilon$ in $(\Z^*)^{\mathcal{I}_\Delta\setminus \mathcal{J}_{\Delta,2}}$, set
\[w_\epsilon =\sum_{I\in \mathcal{J}_{\Delta,2}} E_I + \sum_{I\notin \mathcal{J}_{\Delta,2}} \epsilon_I E_I \in \mathring{V}_\Delta;\]
if $\delta$ is the unique element of $\im M_{\Delta,2}$ such that
\[(\delta +\logsign \sigma(w_\epsilon))_I=0, \quad I\in \mathcal{J}_{\Delta,2},\]
then
\[\sigma w_\epsilon =w_{\sigma\epsilon}, \quad (\sigma\epsilon)_I=(-1)^{\delta_I} (\sigma\cdot w_\epsilon)_I, \quad I\notin \mathcal{J}_{\Delta,2}.\]
\end{proposition}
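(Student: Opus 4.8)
### Proof proposal for Proposition~\ref{prop:ActionOfAutDelta}

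\textbf{The plan is to} track how the $D_n$-normalization interacts with the $\Aut(\Delta)$-action, since the subtlety is precisely that $\sigma$ need not preserve the normalizing set $\mathcal{J}_{\Delta,2}$, so $\sigma w_\epsilon$ will in general leave $W$ and must be pushed back in by a further $D_n$-element. First I would recall that $\mathring{W}$ is a fundamental domain for $D_n$ (Proposition~\ref{prop:fundamental_domain}), so $\mathring{W}\cong\mathring{V}_\Delta/D_n$, and that the connected components of $\mathring{W}$ are indexed by the choice of signs $\epsilon\in(\Z^*)^{\mathcal{I}_\Delta\setminus\mathcal{J}_{\Delta,2}}$ modulo the subgroup of sign-patterns coming from $D_n$ acting trivially on the $\mathcal{J}_{\Delta,2}$-coordinates — that is, modulo $\im M_{\Delta,2}$ read in the appropriate coordinates. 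The point $w_\epsilon$ is the canonical representative with $+1$ on $\mathcal{J}_{\Delta,2}$ and signs $\epsilon$ elsewhere.

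\textbf{Next I would} compute $\sigma\cdot w_\epsilon$ directly from the formula $\sigma\cdot E_I=E_{\sigma^{-1}I}$: since $\sigma$ permutes $\mathcal{I}_\Delta$, the vector $\sigma\cdot w_\epsilon$ again has all coordinates equal to $\pm1$, but now the coordinates indexed by $\mathcal{J}_{\Delta,2}$ need not all be $+1$. To return to the fundamental domain we must act by some $e^{M_\Delta}(\eta)$, $\eta\in(\Z^*)^n$, so as to restore the value $+1$ on every $I\in\mathcal{J}_{\Delta,2}$. The effect of $e^{M_\Delta}(\eta)$ on signs is, in $\Z_2$-language, to add $M_{\Delta,2}(\logsign\eta)\in\im M_{\Delta,2}$ to the sign-vector. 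Hence the required correction is the unique $\delta\in\im M_{\Delta,2}$ with $(\delta+\logsign(\sigma\cdot w_\epsilon))_I=0$ for all $I\in\mathcal{J}_{\Delta,2}$: uniqueness holds because $M_{\Delta,2}$ restricted to the $\mathcal{J}_{\Delta,2}$-coordinates is injective on $\im M_{\Delta,2}$ (this is exactly the maximality of $\mathcal{J}_{\Delta,2}$, used already in the proof of Proposition~\ref{prop:fundamental_domain}). Applying the corresponding $D_n$-element then multiplies the $I$-th coordinate by $(-1)^{\delta_I}$ for every $I$, so the resulting point of $\mathring{W}$ is $w_{\sigma\epsilon}$ with $(\sigma\epsilon)_I=(-1)^{\delta_I}(\sigma\cdot w_\epsilon)_I$ for $I\notin\mathcal{J}_{\Delta,2}$, and $(\sigma\epsilon)_I=1$ for $I\in\mathcal{J}_{\Delta,2}$ by construction of $\delta$. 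Here I am using implicitly that $\logsign\sigma(w_\epsilon)$ in the statement means $\logsign$ of the sign-vector of $\sigma\cdot w_\epsilon$.

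\textbf{The main obstacle} I anticipate is bookkeeping rather than conceptual: one has to be careful that $\delta\in\im M_{\Delta,2}$ genuinely lifts to an honest $\eta\in(\Z^*)^n$ (which is immediate, as $\im M_{\Delta,2}$ is by definition the image of $\logsign$ composed with $e^{M_\Delta}$), and that the chosen $e^{M_\Delta}(\eta)$ does not disturb the normalization already achieved on $\mathcal{J}_{\Delta,2}$ beyond what $\delta$ records — but this is guaranteed precisely because $\delta$ is defined to kill those coordinates. A secondary point worth spelling out is that the resulting $w_{\sigma\epsilon}$ is independent of the choice of lift $\eta$: two lifts differ by an element of $\ker M_{\Delta,2}$, which acts trivially on all signs, so $\delta$ and hence $\sigma\epsilon$ are well defined. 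With these observations the proposition follows by assembling the two group actions and reading off coordinates.
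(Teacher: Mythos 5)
Your proof is correct and follows essentially the same route as the paper's (much terser) two-line argument: apply $\sigma$, then identify the unique $D_n$-correction $\delta\in\im M_{\Delta,2}$ that restores the normalization $c_I=1$ on $\mathcal{J}_{\Delta,2}$, with existence and uniqueness coming from the maximality of $\mathcal{J}_{\Delta,2}$. The only (cosmetic) slip is that the connected components of $\mathring{W}$ itself are indexed by $(\Z^*)^{\mathcal{I}_\Delta\setminus\mathcal{J}_{\Delta,2}}$ with no further quotient --- the quotient by $\im M_{\Delta,2}$ describes components of $\mathring{V}_\Delta/D_n$, which Proposition~\ref{prop:fundamental_domain} identifies with those of $\mathring{W}$ --- and this does not affect your argument.
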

\begin{proof}
Existence and uniqueness of $\delta$ follow from the definition of $\mathcal{J}_{\Delta,2}$. By construction, $\delta=M_{\Delta,2}(x_1,\dotsc, x_n)$ with
\[e^{M_\Delta}((-1)^{x_1},\dotsc, (-1)^{x_n}) w_\epsilon = w_{\sigma\epsilon}.\qedhere\]
\end{proof}

\begin{example}
\label{ex:631:6}
The Lie algebra \texttt{631:6} with diagram given in Figure~\ref{figure:2216} has    % 631:6#2216
\[
 M_\Delta =
\left(
\begin{array}{cccccc}
-1 & -1 & 0& 1 & 0 & 0 \\
-1 & 0 & -1 & 0& 1 & 0 \\
0 & -1 & 0 & 0 & -1 & 1 \\
0 & 0 & -1 & -1 & 0 & 1
\end{array}
\right).
\]
It is clear that the rank over $\R$ is three, and the first three rows are independent over both $\R$ and $\Z_2$. This gives the element
\[
 e^{12}\otimes e_4+e^{13}\otimes e_5 + e^{25}\otimes e_6+c_{346}e^{34}\otimes e_6\in V_\Delta.
\]
Then
\[c_Ic_J \mathfrak{a}(E_{IJ}) = -c_{346}e^{123}\otimes e_6 + e^{123}\otimes e_6;\]
this is only zero when $c_{346}=1$. Thus, up to equivalence the only nice Lie algebra with diagram $\Delta$ is
\[\texttt{631:6}\qquad(0,0,0,e^{12},e^{13},e^{34}+e^{25}).\] % 631:6#2216
\end{example}
\begin{figure}[thp]
\caption{\label{figure:2216}The nice diagram \texttt{631:6}}}
\center{
\includegraphics[width=5cm]{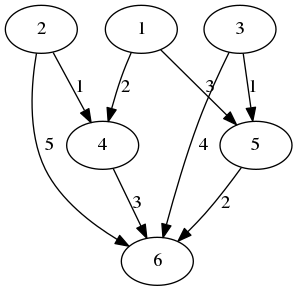}
\end{figure}

\begin{example}
\label{ex:SameDiagramInequivalentBases}
It is easy to see that the Lie algebras denoted by $N_{6,3,1a}$ and $N_{6,3,1}$ in \cite{Gong} admit inequivalent nice bases on the same diagram $\Delta$, namely
\begin{gather*}
\texttt{631:5a}\quad  (0,0,0,e^{12},e^{13},e^{24}+e^{35})\qquad N_{6,3,1a}\\%    631:5#2216
\texttt{631:5b}\quad  (0,0,0,- e^{12},e^{13},e^{35}+e^{24})\qquad  N_{6,3,1}.%   631:5#2216
\end{gather*}
This example shows that a nice diagram does not determine uniquely a nice Lie algebra.
\end{example}

\begin{example}%73:7#82
For an example where the action of $\Aut(\Delta)$ must be taken into account, consider the diagram \texttt{73:7} (see Table~\ref{TableNiceDim7}). In this case $M_\Delta$ has rank six and $W$ contains precisely four elements, each defining a Lie algebra, namely
\begin{align*}
   \texttt{i}&\quad(0,0,0,0,e^{23}+e^{14},e^{24}+e^{13},e^{12}+e^{34})\\
  \texttt{ii}&\quad(0,0,0,0,e^{14}+e^{23},-e^{13}+e^{24},e^{12}+e^{34})\\
 \texttt{iii}&\quad(0,0,0,0,-e^{14}+e^{23},e^{13}+e^{24},e^{12}+e^{34})\\
  \texttt{iv}&\quad(0,0,0,0,-e^{14}+e^{23},e^{24}-e^{13},e^{34}+e^{12})
\end{align*}
The automorphism $\sigma=(123)(567)$ cycles through  \texttt{i},\texttt{iii},\texttt{iv}, all isomorphic to $37D$, but \texttt{ii} is isomorphic to $37D_1$.
\end{example}

Summing up, we obtain the following algorithm:

\textbf{Step A}.
Choose a maximal set of $\Z_2$-linearly independent rows of $M_{\Delta,2}$, indexed by $\mathcal{J}_{\Delta,2}$, and choose $\mathcal{J}_{\Delta}\supset\mathcal{J}_{\Delta,2}$ parametrizing a maximal set of $\R$-linearly independent rows of $M_{\Delta}$. By Proposition~\ref{prop:fundamental_domain}, we are reduced to considering the set $\mathring{W}$ of elements $c_IE_I\in V_\Delta$ where
\begin{equation}
 \label{eqn:jacobi_ansatz}
c_I= \begin{cases}
        1 & I\in \mathcal{J}_{\Delta,2}\\
        \pm 1 & I\in \mathcal{J}_{\Delta}\setminus \mathcal{J}_{\Delta,2}\\
      \text{a nonzero constant} & \text{otherwise}.
       \end{cases}
  \end{equation}

\textbf{Step B}.
Determine the action of $\Aut(\Delta)$ on the set of connected components of $\mathring{W}$, and choose connected components $W_1,\dotsc, W_k$ of $\mathring{W}$, one for each orbit.

\textbf{Step C}.
On each component $W_j$, impose the Jacobi identity
\[\sum_{I,J} c_Ic_J\mathfrak{a}(E_{IJ})=0;\]
this is a system of polynomial equations in the $c_I$, some of which have degree less than two thanks to the assumptions~\eqref{eqn:jacobi_ansatz}. Neglecting quadratic equations for the moment, determine the subspace of $W_j$ where the linear equations are satisfied; this is a subset $L_j\subset W$ defined by linear equations and inequalities.

\textbf{Step D}.
For each nonempty $L_j$, consider the corresponding family of Lie algebras obtained by imposing the quadratic constraints originating from the Jacobi identity. Eliminate redundant families, namely those that only differ by changing the sign of the parameters. Each remaining family determines a row in the output table. By construction (see Theorem~\ref{thm:inequivalent}), different rows correspond to inequivalent nice Lie algebras.

\begin{remark}
Our algorithm can be adapted to give a classification over $\C$; in this case, all constants in $\mathcal{J}_\Delta$ can be normalized to $1$, so that the resulting space $W$ is connected. This means that each nice diagram gives rise at most to one family of nice Lie algebras. On the other hand, some nice diagrams that have been excluded in the real classification may give rise to complex solutions when the Jacobi identity admits solutions over $\C$ but not over $\R$.
\end{remark}

\subsection{Implementation notes}
\label{subsection:implementation_notes}
Our C++ implementation of the above algorithms is available at \url{https://github.com/diego-conti/DEMONbLAST}. We collect here some remarks on our implementation, listed according to the step to which they are related.

\emph{(Step 1)} Iteration through subsets is performed with an iterator-style class, represented internally by vectors of booleans.

Our algorithm to classify diagrams is based on iteration rather than recursion; this implies some repeated computations; for instance, in classifying diagrams with $5$ nodes, diagrams of type $(1,1)$ are computed twice, once for $(2,1,1,1)$ and one for $(3,1,1)$. This choice is partly motivated by the fact that the most computationally expensive step is the last one, where keeping track of already-computed diagrams does not give any advantage. On another note, the use of iteration greatly simplifies a parallelized implementation.

\emph{(Step 2)}. The operation of Step 2 is built into the last iteration in Step 1: at each node, the incoming arrows to be added are chosen in such a way that the overall number of incoming arrows at that node is even.

\emph{(Step 3)}. Elimination of isomorphic diagrams is best performed at an early stage, since it reduces the number of diagrams to be processed. For instance, for type $(2,2,4)$ one obtains $41$ diagrams, but only $9$ of them are pairwise nonisomorphic. The use of a hash function $\hash$, as opposed to brute-force iteration through the $n!$ bijections $N(\Delta)\to N(\Delta')$, leads to a considerable performance gain for an appropriate choice of $\hash$. In our implementation, we computed $\hash$ by counting concatenated arrows, i.e. sequences $v_1\to v_2\to \dots \to v_k$, so that $\hash(e)$ is defined in terms of the sequences
\[i_1(e),\dotsc, i_s(e), \quad o_1(e),\dotsc, o_s(e),\]
where $i_k(e)$ and $o_k(e)$ are the numbers of concatenated arrows of length $k$ respectively ending and beginning at $e$.

\emph{(Step C)}.
In order to determine whether each $L_j$ is empty, our program needs to determine whether a system of linear equalities and inequalities is consistent. Since $\coker M_\Delta$ is generally fairly small (e.g. $\dim \coker M_\Delta\leq 5$ for $n=8$, with the upper bound only attained by two nice diagrams), meaning that the dimension of $W$ is small, we adopted the simple strategy of solving the linear equations first, and then applying the Fourier-Motzkin method to the resulting system of inequalities, where the surviving number of unknowns is generally small.

\subsection{Comparison with \cite{KadiogluPayne:Computational}}
A similar algorithm was given in \cite{KadiogluPayne:Computational} to classify nilpotent Lie algebras such that the Nikolaevsky derivation is semisimple with  eigenvalues of multiplicity one and the Gram matrix is invertible; this condition implies that the eigenbasis is nice (\cite[Lemma 2.5]{Payne:Applications}) and the root matrix $M_\Delta$ is surjective (\cite[Lemma 2.8]{KadiogluPayne:Computational}) with at most $n-1$ rows. In this special situation the Jacobi identity follows automatically from the definition of nice diagram, which in this case rules out the existence of double arrows (see Remark~\ref{remark:surjective_no_jacobi}), or \emph{bad pairs} in the language of \cite{KadiogluPayne:Computational}.

The construction of \cite{KadiogluPayne:Computational} is inductive like ours; the absence of double arrows (bad pairs) is exploited to exclude some cases along the way. Moreover the hypotheses on the Nikolaevsky derivation imply that $\Aut(\Delta)$ is trivial, due to the invariance of said derivation under $\Aut(\Delta)$; therefore classification up to equivalence amounts to classification up to $D_n$. In addition, the same hypotheses are used to to show that the resulting Lie algebras are pairwise nonisomorphic.

Our construction is both more general and more efficient. Indeed, considering the symmetry between the nodes being added at each inductive step  enabled us to consider a smaller number of cases (reflected in the hypothesis $A_1\leq \dots \leq A_n$ of  Proposition~\ref{prop:construct_diagrams}). Moreover, our C++ implementation (written using sparse data structures from the STL) appears to be less memory-consuming than the matrix-based Matlab implementation illustrated in \cite{KadiogluPayne:Computational}.

In addition, we do not impose any restriction on either the rank of $M_\Delta$, the group $\Aut(\Delta)$ or the existence of double arrows. Our classification is made possible by the description of the fundamental domain $\mathring{W}$ (see Proposition~\ref{prop:fundamental_domain} and Remark~\ref{remark:finiteunionofaffinespaces}) and the characterization of the action of $\Aut(\Delta)$ in Proposition~\ref{prop:ActionOfAutDelta}.

\section{Classification of nice Lie algebras}
\label{sec:nice_classification}
In this section we  collect some applications of our algorithm and some theoretical remarks; in particular, we determine the number of inequivalent nice bases on each nilpotent Lie algebra of dimension up to $7$ and prove that nilpotent Lie algebras of dimension $n$ have at most countably many inequivalent nice bases. We identify nilpotent Lie algebras of dimension up to $7$ by  their name in Gong's classification (\cite{Gong}). For nice Lie algebras, we use a different name consisting of three parts: a sequence of integers, counting dimensions in the lower central series (LCS), a progressive number identifying the nice diagram, and possibly a letter to distinguish inequivalent families originating from the same diagram. Recall that in our language the choice of a nice Lie algebra implies the choice of a nice basis; therefore, a single entry in Gong's classification may correspond to more entries in ours.

The complete list of nice nilpotent Lie algebras up to dimension $7$ is given in the Appendix (see the ancillary files for dimensions $8$ and $9$).

As a first application, we improve Remark~\ref{rem:nice6notUnique} and complete the description of all $6$\--di\-men\-sio\-nal nice nilpotent Lie algebras. It is easy to check that $6$-dimensional nilpotent Lie algebras different from $N_{6,1,4}$ admit a nice basis; for example, in  Salamon's list (\cite{Salamon:ComplexStructures}), all Lie algebras are written in terms of a nice basis except
\begin{gather*}
 (0,0,12,13,0,14+23+25)\quad N_{6,1,4}\\
(0,0,0,12,13+ 14,24)\quad N_{6,2,9}
\end{gather*}
It is not difficult to see that $N_{6,2,9}$ admits a nice basis:
\[\texttt{632:3a}\qquad (0,0,0,e^{12},e^{14}+e^{23},e^{13}+e^{24}).\] % 632:3#1497
Using our algorithm, we can easily verify the known fact that $N_{6,1,4}$ does not admit a nice basis (\cite[Proposition 2.1]{LauretWill:diagonalization}) and list the Lie algebras that admit more inequivalent nice bases  (see Table~\ref{table:6DimPiuNice}). We have then proved the following:
\begin{proposition}
\label{prop:two_inequivalent_bases}
Among nilpotent Lie algebras of dimension $6$:
\begin{itemize}
 \item  $N_{6,1,4}$ does not admit a nice basis;
 \item $N_{6,2,5}$, $N_{6,3,1}$ and $N_{3,2}\oplus N_{3,2}$ admit exactly two inequivalent bases;
 \item the remaining Lie algebras admit exactly one nice basis up to equivalence.
 \end{itemize}
\end{proposition}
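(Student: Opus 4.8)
The plan is to run the classification algorithm of Section~\ref{sec:algorithm} in dimension $n=6$ and compare the output with Gong's list, but since a full run is not needed to justify the three bullet points separately, I would organize the argument as follows. First, for the non-existence part, I would invoke the fact (already recalled in Remark~\ref{rem:nice6notUnique}) that among the $34$ isomorphism classes of $6$\--dimensional nilpotent Lie algebras, exactly $33$ appear in the output of our algorithm; hence exactly one class, namely $N_{6,1,4}$, carries no nice basis. Concretely one checks that $N_{6,1,4}=(0,0,e^{12},e^{13},0,e^{14}+e^{23}+e^{25})$ is not equivalent to any entry of Table~\ref{table:6DimPiuNice} (equivalently, of the dimension\--$6$ portion of the Appendix); this matches \cite[Proposition 2.1]{LauretWill:diagonalization}.

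Next, for the two\--bases claim, I would go through the list of nice diagrams of type compatible with each of $N_{6,2,5}$, $N_{6,3,1}$ and $N_{3,2}\oplus N_{3,2}$, apply Steps A--D, and read off that each of these three Lie algebras occurs exactly twice, with the two occurrences lying on diagrams (or on families within a diagram) that Theorem~\ref{thm:inequivalent} guarantees to be inequivalent as nice Lie algebras. For $N_{3,2}\oplus N_{3,2}=\texttt{62}$ this is precisely the pair \texttt{62:2} and \texttt{62:4a} exhibited in Remark~\ref{rem:nice6notUnique}; for $N_{6,3,1}$ it is the pair \texttt{631:5a}, \texttt{631:5b} of Example~\ref{ex:SameDiagramInequivalentBases}, and one must additionally verify that no third inequivalent nice basis exists, i.e. that these are the only two rows of the output whose underlying Lie algebra is isomorphic to $N_{6,3,1}$. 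The analogous check produces the two bases of $N_{6,2,5}$.

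Finally, for the remaining Lie algebras I would argue that each appears exactly once in the output: since every nice basis on a $6$\--dimensional nilpotent Lie algebra is, up to equivalence, one of the finitely many rows produced by the algorithm, it suffices to group the rows by isomorphism type of the underlying Lie algebra (using Gong's invariants to identify each row) and observe that every isomorphism class other than the three listed above, and other than $N_{6,1,4}$, contributes a single row. This last bookkeeping step is the main obstacle: it requires correctly matching every row of our classification to its name in \cite{Gong}, which is a finite but delicate computation, carried out by computing for each candidate Lie algebra the standard isomorphism invariants (dimensions of the lower and upper central series, of the derived algebra, ranks of the various $\ad$\--maps, etc.) and, where these do not separate classes, exhibiting an explicit isomorphism or obstruction. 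Once this table of correspondences is in place, the three bullet points follow by inspection.
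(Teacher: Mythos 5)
Your proposal is correct and follows essentially the same route as the paper: the paper's proof of Proposition~\ref{prop:two_inequivalent_bases} consists precisely of running the classification algorithm in dimension $6$, matching each output row to its isomorphism class in Gong's list (recorded in the last column of Table~\ref{TableNiceDim6}), and reading off which classes occur zero, one, or two times, with Theorem~\ref{thm:inequivalent} guaranteeing that distinct rows are inequivalent. The only minor slips are terminological (``not equivalent'' should be ``not isomorphic'' when comparing $N_{6,1,4}$ to the table entries, and the non-existence check must be done against the full dimension-$6$ table, not Table~\ref{table:6DimPiuNice}), neither of which affects the argument.
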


\begin{table}[thp]
\centering
\caption{6-dimensional nilpotent Lie algebras with inequivalent bases\label{table:6DimPiuNice}}
\begin{tabular}{>{\ttfamily}l L L}
\toprule
\textnormal{Name} & \g & \text{Gong \cite{Gong}} \\
\midrule
\multicolumn{3}{c}{Type: $2121$}\\
6431:2b&0,0,e^{12},- e^{13},e^{23},e^{14}+e^{25}& N_{6,2,5}\\%segno4,6, poi 1->(1+2)/2,2->(1-2)/2,3->3/(-2),4->(4+5)/(-4),5->(4-5)/(-4),6->6/(-4)  6431:2#44233
6431:3&0,0,e^{12},e^{13},e^{23},e^{24}+e^{15}& N_{6,2,5}\\[2pt]% 6431:3#44233
\multicolumn{3}{c}{Type: $321$}\\
631:5b&0,0,0,- e^{12},e^{13},e^{35}+e^{24}& N_{6,3,1}\\%1->(2)1,2->(3-2)/2,3->(2+3)/2,4->4-5,5->4+5   631:5#2216
631:6&0,0,0,e^{12},e^{13},e^{25}+e^{34}& N_{6,3,1}\\[2pt]%    631:6#2216
\multicolumn{3}{c}{Type: $42$}\\
62:2&0,0,0,0,e^{12},e^{34}& N_{3,2}\oplus N_{3,2}\\%  62:2#70
62:4a&0,0,0,0,e^{13}+e^{24},e^{12}+e^{34}& N_{3,2}\oplus N_{3,2}\\%1->1+4,2->3-2,3->3+2,4->1-4  62:4#76
\bottomrule
\end{tabular}
\end{table}

Comparing the list of nice nilpotent Lie algebras (see Table~\ref{TableNiceDim7}) with the classification of $7$-dimensional nilpotent Lie algebras in \cite{Gong}, we obtain the following:
\begin{theorem}
\label{thm:notnice7}
The nilpotent Lie algebras of dimension $7$ listed in Table~{\ref{tabella7NOnice}} do not admit a nice basis; those listed in Table~\ref{table:tabella7MoreNice} admit exactly two inequivalent nice bases; the remaining nilpotent Lie algebras of dimension 7 admit exactly one nice basis up to equivalence.
\end{theorem}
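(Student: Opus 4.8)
The plan is to prove the theorem by comparing the complete list of nice nilpotent Lie algebras of dimension $7$ produced by the algorithm of Section~\ref{sec:algorithm} (Table~\ref{TableNiceDim7}) with Gong's classification of all $7$-dimensional real nilpotent Lie algebras \cite{Gong}. By Proposition~\ref{prop:nicenil} and Theorem~\ref{thm:inequivalent}, Table~\ref{TableNiceDim7} contains, up to equivalence, every nice nilpotent Lie algebra of dimension $7$, each given by explicit structure constants (possibly depending on one real parameter). Since an equivalence is precisely a Lie algebra isomorphism carrying the distinguished basis to a basis of multiples of basis elements, a $7$-dimensional nilpotent Lie algebra admits a nice basis if and only if it is isomorphic, as a Lie algebra, to some entry of Table~\ref{TableNiceDim7}, and the number of its inequivalent nice bases equals the number of entries of Table~\ref{TableNiceDim7} lying in its isomorphism class. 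Hence the whole statement reduces to partitioning the entries of Table~\ref{TableNiceDim7} into Lie algebra isomorphism classes and matching each class with a name in \cite{Gong}.

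To carry this out, first I would compute, for each entry of Table~\ref{TableNiceDim7}, a battery of isomorphism invariants: the dimensions of the lower and upper central series and of the derived series, the dimension of the center, $\dim\Der(\g)$, the Betti numbers $\dim H^k(\g)$, and, for the parametrized families, the discriminant-type quantities that distinguish Gong's subfamilies. These already separate the vast majority of isomorphism classes and pin down the corresponding algebra in \cite{Gong}. In the finitely many residual cases where the invariants coincide, I would decide isomorphism directly — either by exhibiting an explicit linear change of basis taking one set of structure constants to the other (as in Remark~\ref{rem:nice6notUnique} and Example~\ref{ex:SameDiagramInequivalentBases}), or by verifying non-isomorphism through a Gröbner-basis computation on the polynomial system expressing the existence of an isomorphism. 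For the one-parameter families one must additionally determine, for each parameter value, the isomorphism type, so that the families of Table~\ref{TableNiceDim7} are correctly aligned with Gong's families rather than matched only pointwise.

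With the matching table in hand, the three assertions are read off: the Gong algebras that never occur are exactly those admitting no nice basis, and these are collected in Table~\ref{tabella7NOnice}; the Gong algebras occurring exactly twice form Table~\ref{table:tabella7MoreNice}; every other $7$-dimensional nilpotent Lie algebra occurs exactly once. In particular, the assertion that no algebra is matched three or more times — equivalently, that the number of inequivalent nice bases never exceeds $2$ in dimension $7$ — comes out of the explicit comparison and is not derived from an a priori bound; Corollary~\ref{cor:atmostcountable} only yields countability, while Proposition~\ref{prop:two_inequivalent_bases} is the analogous (and similarly proved) statement in dimension $6$.

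I expect the main obstacle to be precisely the Lie algebra isomorphism problem for the handful of pairs of entries whose elementary invariants agree: deciding whether two explicitly given $7$-dimensional nilpotent Lie algebras are isomorphic \emph{over $\R$} can require an ad hoc change of basis or a symbolic elimination, and some care is needed to distinguish real forms of a common complex nilpotent Lie algebra. The presence of continuous families compounds the bookkeeping, since one must track isomorphisms within and between families, not merely between single algebras; but as both lists are finite and fully explicit, this is a bounded — if laborious — verification.
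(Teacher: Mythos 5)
Your proposal is correct and follows essentially the same route as the paper: the theorem is proved by comparing the algorithmically generated complete list of nice Lie algebras (Table~\ref{TableNiceDim7}, already matched to Gong's names) with Gong's classification, reading off the algebras that appear zero, one, or two times. The one genuine subtlety you flag — that within a one-parameter family distinct parameter values could give isomorphic Lie algebras with inequivalent nice bases — is exactly what the paper isolates and checks separately (in Proposition~\ref{prop:finite7}, for the diagrams \texttt{7431:13} and \texttt{741:6}, where the isomorphisms are accounted for by $\Aut(\Delta)$), so your treatment matches the paper's in substance.
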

\begin{proof}
Table~{\ref{tabella7NOnice}} contains nilpotent Lie algebras that do not appear in Table~\ref{TableNiceDim7}; Table~\ref{table:tabella7MoreNice} contains those that appear on two distinct rows of Table~\ref{TableNiceDim7}. It remains to show that nilpotent Lie algebras that appear on exactly one row in Table~\ref{table:tabella7MoreNice} do not admit inequivalent nice bases. This will be verified in Proposition~\ref{prop:finite7}.
\end{proof}

\FloatBarrier
\begin{footnotesize}
{\setlength{\tabcolsep}{2pt}
\begin{longtable}[htpc]{L L L}
\caption{7-dimensional nilpotent Lie algebras without a nice basis\label{tabella7NOnice}}\\
\toprule
\text{LCS Dim.} & \g & \text{Gong \cite{Gong}} \\
\midrule
\endfirsthead
\multicolumn{3}{c}{\tablename\ \thetable\ -- \textit{Continued from previous page}} \\
\toprule
\text{LCS Dim.} & \g & \text{Gong \cite{Gong}} \\
\midrule
\endhead
\bottomrule\\[-7pt]
\multicolumn{3}{c}{\tablename\ \thetable\ -- \textit{Continued to next page}} \\
\endfoot
\bottomrule\\[-7pt]
\endlastfoot
754321&0,0,e^{12},e^{13},e^{14},e^{15}+e^{23},e^{16}+e^{23}+e^{24}&123457E\\%OK
754321&0,0,e^{12},e^{13},e^{14},e^{15}+e^{23},e^{16}+e^{24}+e^{25}-e{34}&123457F\\%OK
754321&0,0,e^{12},e^{13},e^{14}+e^{23},e^{15}+e^{24},e^{16}+e^{23}+e^{25}&123457H\\%OK
754321&0,0,e^{12},e^{13},e^{14}+e^{23},e^{15}+e^{24},-e^{16}+e^{23}-e^{25}&123457H_1\\%OK

75421&0,0,e^{12},e^{13},e^{14}+e^{23},e^{15}+e^{24},e^{23}&23457E\\%OK
75421&0,0,e^{12},e^{13},e^{14}+e^{23},e^{25}-e^{34},e^{23}&23457F\\%OK

75421&0,0,e^{12},e^{13},e^{14},e^{23},e^{16}+e^{25}+e^{24}-e^{34}&13457G\\%OK
75421&0,0,e^{12},e^{13},e^{14},e^{23},e^{15}+e^{25}+e^{26}-e^{34}&13457I\\%OK

75421&0,0,e^{12},e^{13},e^{23},e^{15}+e^{24},e^{14}+e^{16}+e^{25}+e^{34}&12457J\\%OK
75421&0,0,e^{12},e^{13},e^{23},e^{15}+e^{24},e^{14}+e^{16}-e^{25}+e^{34}&12457J_1\\%OK
75421&0,0,e^{12},e^{13},e^{23},e^{15}+e^{24},e^{14}+e^{16}+e^{34}&12457K\\%OK
75421&0,0,e^{12},e^{13},e^{23},e^{15}+e^{24},e^{14}+e^{16}+\lambda e^{25}+e^{26}+e^{34}-e^{35}&12457N\\%OK
75421&0,0,e^{12},e^{13},e^{23},-e^{14}-e^{25},e^{16}+e^{25}-e^{35}&12457N_1\\%OK
75421&0,0,e^{12},e^{13},e^{23},-e^{14}-e^{25},e^{15}+e^{16}+e^{24}+\lambda e^{25}-e^{35}&12457N_2\ \lambda\geq0\\%OK

74321&0,0,e^{12},e^{13},e^{14},0,e^{15}+e^{23}+e^{26}&13457B\\%OK
74321&0,0,e^{12},e^{13},e^{14}+e^{23},0,e^{15}+e^{24}+e^{26}&13457D\\%OK

74321&0,0,e^{12},e^{13},0,e^{14}+e^{25},e^{35}+e^{16}&12457B\\%OK
74321&0,0,e^{12},e^{13},0,e^{14}+e^{23}+e^{25},e^{16}+e^{24}+e^{35}&12457E\\%OK
74321&0,0,e^{12},e^{13},0,e^{14}+e^{23}+e^{25},e^{26}-e^{34}&12457F\\%OK
74321&0,0,e^{12},e^{13},0,e^{14}+e^{23}+e^{25},e^{15}+e^{26}-e^{34}&12457G\\%OK

74321&0,0,0,e^{12},e^{14}+e^{23},e^{15}-e^{34},e^{16}+e^{23}-e^{35}&12357B\\%OK
74321&0,0,0,e^{12},e^{14}+e^{23},e^{15}-e^{34},e^{16}-e^{23}-e^{35}&12357B_1\\%OK

7431&0,0,e^{12},e^{13},0,e^{25}+e^{23},e^{14}&2457E\\%OK
7431&0,0,e^{12},e^{13},0,e^{14}+e^{23},e^{23}+e^{25}&2457J\\%OK
7431&0,0,e^{12},0,e^{13},e^{23}+e^{24},e^{15}+e^{16}+e^{25}+\lambda e^{26}+e^{34}&1357S\ \lambda\leq0\\%OK

7321&0,0,e^{12},e^{13},0,e^{14}+e^{23}+e^{25},0&2457 N_{6,1,4}\oplus\R\\%OK
7421&0,0,e^{12},e^{13},0,e^{14}+e^{25}+e^{23},e^{15}&2457D\\%OK
7421&0,0,0,e^{12},e^{14}+e^{23},e^{23},e^{15}-e^{34}&2357A\\%OK

7421&0,0,e^{12},0,e^{23},e^{14},e^{16}+e^{26}+e^{25}-e^{34}&1357H\\%OK
7421&0,0,e^{12},0,e^{24}+e^{13},e^{14},e^{15}+e^{23}+\frac{1}{2}e^{26}+\frac{1}{2}e^{34}&1357L\\%OK
7421&0,0,e^{12},0,e^{24}+e^{13},e^{14},e^{15}+\lambda e^{23}+e^{34}+e^{46}&1357N\\%OK

742&0,0,0,e^{12},e^{13},e^{14}+e^{24}-e^{35},e^{25}+e^{34}&247H_1\\%OK
742&0,0,0,e^{12},e^{13},e^{15}+e^{35},e^{25}+e^{34}&247J\\%OK

741&0,0,0,e^{12},e^{23},-e^{13},e^{15}+e^{16}+e^{26}-2e^{34}&147D\\%OK
741&0,0,0,e^{12},e^{23},-e^{13},-\lambda e^{16}+\lambda e^{25}+2e^{26}-2e^{34}&147E_1\,\lambda\!>\!1,\lambda\!\neq\!2\\%OK

7321&0,0,0,e^{12},e^{14}+e^{23},0,e^{15}-e^{34}+e^{36}&1357B\\%
7321&0,0,0,e^{12},e^{14}+e^{23},0,e^{15}-e^{34}+e^{24}+e^{36}&1357C\\%

732&0,0,e^{12},0,0,e^{13}+e^{14},e^{15}+e^{23}&257I\\%OK
732&0,0,e^{12},0,0,e^{13}+e^{14}+e^{25},e^{23}+e^{15}&257J_1\\%OK
\end{longtable}
}
\end{footnotesize}

\FloatBarrier
\begin{footnotesize}
{\setlength{\tabcolsep}{2pt}
\begin{longtable}[htpc]{>{\ttfamily}l L L}
\caption{7-dimensional nilpotent Lie algebras admitting two nice bases\label{table:tabella7MoreNice}}\\
\toprule
\textnormal{Name} & \g & \text{Gong \cite{Gong}} \\
\midrule
\endfirsthead
\multicolumn{3}{c}{\tablename\ \thetable\ -- \textit{Continued from previous page}} \\
\toprule
\text{Name} & \g & \text{Gong \cite{Gong}} \\
\midrule
\endhead
\bottomrule\\[-7pt]
\multicolumn{3}{c}{\tablename\ \thetable\ -- \textit{Continued to next page}} \\
\endfoot
\bottomrule\\[-7pt]
\endlastfoot
\multicolumn{3}{c}{Type: $2122$}\\*
7542:1&0,0,e^{12},e^{13},e^{23},e^{14},e^{25}&2457L\\%OK      1->1-2,2->1+2,3->(2)3,4->2(4-5),5->2(4+5),6->6-7,7->6+7    7542:1#44233
7542:3a&0,0,e^{12},e^{13},e^{23},e^{24}+e^{15},e^{14}+e^{25}&2457L\\%OK 6<->7    7542:3#45871
\multicolumn{3}{c}{Type: $3121$}\\*
7431:7b&0,0,0,e^{12},- e^{14},e^{24},e^{26}+e^{15}&N_{6,2,5}\oplus\R\\%OK  1->(1+2)/2,2->(1-2)/2,3->7,4->3/(-2),5->(4+5)/4,6->(4-5)/(-4),7->6/4    7431:7#44233
7431:8&0,0,0,e^{12},e^{14},e^{24},e^{25}+e^{16}&N_{6,2,5}\oplus\R\\%OK   3->6->5->4->3  7431:8#44233
7431:10b&0,0,0,e^{12},-e^{14}+e^{23},e^{13}+e^{24},e^{26}+e^{15}&1357QRS_1\ \lambda=1\\%OK  3->-4,4->3,5->-5,6->-6,7->-7   7431:10#45387
7431:11b&0,0,0,e^{12},e^{24}-e^{13},e^{23}+e^{14},e^{15}+e^{26}&1357QRS_1\ \lambda=1\\%OK    a=sqrt(2) 1->(-1+2)/a,2->(-1-2)/a,3->4,4->3,5->(6-5)/a,6->(-5-6)/a,7->7   7431:11#45387
\multicolumn{3}{c}{Type: $322$}\\*
742:5&0,0,0,e^{12},e^{13},e^{24},e^{35}&247F\\%OK   2->2+3,3->2-3,4->4+5,5->4-5,6->6+7,7->6-7
742:18a&0,0,0,e^{12},e^{13},e^{25}+e^{34},e^{24}+e^{35}&247F\\%OK 6<->7    742:18#2318
\multicolumn{3}{c}{Type: $331$}\\*
741:3b&0,0,0,- e^{12},e^{13},e^{23},e^{24}+e^{35}&247P\\%OK   1->-1,2->(-1/2)2-3,3->(1/2)2-3,4->(1/2)4+5,5->-(1/2)4+5   741:3#2249
741:4&0,0,0,e^{12},e^{13},e^{23},e^{25}+e^{34}&247P\\% OK    741:4#2249
\multicolumn{3}{c}{Type: $421$}\\*
731:14b&0,0,0,0,- e^{12},e^{13},e^{36}+e^{25}&N_{6,3,1}\oplus\R\\%OK   4->7->6->5->4    731:14#2216
731:15&0,0,0,0,e^{12},e^{13},e^{26}+e^{35}&N_{6,3,1}\oplus\R\\%OK  4->7->6->5->4  731:15#2216
731:16b&0,0,0,0,- e^{12},e^{13},e^{36}+e^{25}+e^{14}&147A\\%OK prima segno 5. Poi 4->7->6->5->4. Poi 1->(2)1,2->(-2+3)/2,3->(2+3)/2,4->-4-5,5->4+5,6->26   731:16#2265
731:18&0,0,0,0,e^{12},e^{13},e^{35}+e^{14}+e^{26}&147A\\%OK  6->5->4->6   731:18#2265
731:19&0,0,0,0,e^{12},e^{34},e^{36}+e^{15}&137A\\%OK    731:19#2728
731:22a&0,0,0,0,e^{23}+e^{14},e^{24}+e^{13},e^{15}+e^{26}&137A\\%OK  1->(-3-1)/2,2->(-1+3)/2,3->(2-4)/2,4->(2+4)/2,5->(-5-6)/2,6->(6-5)/2,7->7/2   731:22#2800
731:21&0,0,0,0,e^{12},e^{34},e^{25}+e^{13}+e^{46}&137B\\%OK 1->-2,2->1,3->-4,4->3  731:21#2776
731:24b&0,0,0,0,e^{24}-e^{13},-e^{12}+e^{34},e^{23}+e^{46}+e^{15}&137B\\[2pt]%OK   1->(-1-3)/2,2->(-2+4)/2,3->(-2-4)/2,4->(-1+3)/2,5->(-5-6)/2,6->(-5+6)/2,7->(7)/2    731:24#2848
\multicolumn{3}{c}{Type: $43$}\\*
73:3&0,0,0,0,e^{12},e^{13},e^{24}&37B\\%OK 1->2->3->1,5->6->-5   73:3#73
73:6a&0,0,0,0,e^{12},e^{14}+e^{23},e^{13}+e^{24}&37B\\%OK 1->(2+3)/2,2->(2-3)/2,3->(1-4)/2,4->(1+4)/2,5->6/(-2),6->(7-5)/2,7->(-5-7)/2 73:6#79
73:5&0,0,0,0,e^{12},e^{34},e^{13}+e^{24}&37D\\%OK 3<->2, 5->6->7->5    73:5#76
73:7a&0,0,0,0,e^{23}+e^{14},e^{24}+e^{13},e^{12}+e^{34}&37D\\%OK 5+6->5,6+7->6,1+2->1,3+4->2,1-4->3,2+3->4,5+6+7->7/2; 0,12,34,1(4-3)+2(-3-4), poi 3'=4-3 e 4'=-3-4,6'=(2)6 ho ricondotto a 73:5(a meno di segni)    73:7#82
\multicolumn{3}{c}{Type: $52$}\\*
72:2&0,0,0,0,0,e^{12},e^{34}&N_{3,2}\oplus N_{3,2}\oplus\R\\%OK  5->7->6->5   72:2#70
72:4a&0,0,0,0,0,e^{24}+e^{13},e^{34}+e^{12}&N_{3,2}\oplus N_{3,2}\oplus\R\\%OK      5->7->6->5   72:4#76
\end{longtable}
}
\end{footnotesize}

Comparing Table~\ref{tabella7NOnice} with Proposition~\ref{prop:two_inequivalent_bases}, we see that none of the central extensions of the $6$-dimensional Lie algebra $N_{6,1,4}$, namely $12457E$, $12457F$ and $12457G$, admits a nice basis. This is consistent with the following general fact:
\begin{proposition}
If $\g$ does not admit any nice basis, then no central extension of $\g$ admits a nice basis.
\end{proposition}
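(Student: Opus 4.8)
The plan is to prove the contrapositive: \emph{if some central extension $\tilde\g$ of $\g$ admits a nice basis, then so does $\g$}. Write $\g=\tilde\g/\mathfrak z$ with $\mathfrak z$ a central ideal of $\tilde\g$, and fix a nice basis $e_1,\dots,e_n$ of $\tilde\g$ with dual coframe $e^1,\dots,e^n$. The first step is a structural observation: $Z(\tilde\g)$ is \emph{adapted} to every nice basis. Indeed, if $x=\sum a_ie_i$ is central then $0=[x,e_k]=\sum_i a_i[e_i,e_k]$ for each $k$; each nonzero $[e_i,e_k]$ is a multiple of a single basis element, and by condition~\ref{enum:condNice2} the targets of these brackets (for fixed $k$) are pairwise distinct as $i$ varies, so no cancellation is possible and $a_i=0$ whenever $e_i$ is not central. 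As a corollary, used repeatedly below, every $de^j$ -- and in particular $de^i$ for $e_i$ central -- lies in $\Lambda^2$ of the span of the non-central covectors, because $e_i$ central means $e_i\hook de^j=0$ for all $j$.

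The easy case is that of an adapted $\mathfrak z=\Span{e_i:i\in A}$ (each $e_i$, $i\in A$, then central): here $\{e_j+\mathfrak z:j\notin A\}$ is a nice basis of $\g$. The bracket condition is immediate, since $[e_p,e_q]$ is a multiple of some $e_k$, which is either retained or lies in $\mathfrak z$; for the coframe condition, $\g^*=\mathfrak z\ann$ contains $\Span{e^j:j\notin A}$, and the differentials of the retained covectors stay inside $\Lambda^2$ of that span by the corollary, so the interior-product condition is inherited verbatim. To reduce to this case one proceeds by induction on $\dim\mathfrak z$: choosing a line $\ell\subseteq\mathfrak z$, once the one-dimensional case is settled $\tilde\g/\ell$ is again a nice central extension of $\g$ with kernel of strictly smaller dimension, and induction applies. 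For $\mathfrak z=\R z$: if $z$ is proportional to a basis element we are done; if $z$ has a nonzero component along a central basis element $e_{i_0}\notin[\tilde\g,\tilde\g]$ (so no arrow points to $i_0$, hence $de^{i_0}=0$), then replacing $e_{i_0}$ by $z$ produces another nice basis -- the bracket condition is unaffected because no bracket is a multiple of $e_{i_0}$, and the coframe condition survives because $de^{i_0}=0$ -- in which $\R z$ is adapted.

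The remaining configuration, where $z\in Z(\tilde\g)\cap[\tilde\g,\tilde\g]$ and $z$ is proportional to no basis element, is the heart of the matter and the step I expect to be the main obstacle. Here $z=\sum_{i\in T}a_ie_i$ with every $e_i$ ($i\in T$) central and equal, up to a scalar, to a single bracket; neither of the previous tricks applies directly, and one must build a nice basis of $\g=\tilde\g/\R z$ more cleverly -- typically by simultaneously modifying basis elements at several levels of the lower central series (replacing generators by suitable linear combinations) so that the brackets descending to $\g$ again become multiples of single basis vectors. Conditions~\ref{enum:condNice1} and~\ref{enum:condNice2} are what make such a choice possible while keeping the associated diagram acyclic and free of repeated labels; verifying that a suitable basis always exists, in particular when $\abs{T}\ge 3$, is where the real work lies.
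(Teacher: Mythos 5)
Much of what you prove is correct and is genuinely useful: the observation that $Z(\tilde\g)$ is adapted to any nice basis (your injectivity argument via~\ref{enum:condNice2} is right), the quotient by an adapted central subspace, the reduction to $\dim\mathfrak z=1$, and the substitution $e_{i_0}\mapsto z$ when $z$ has a component along a central basis vector outside $[\tilde\g,\tilde\g]$ are all sound. For comparison, the paper disposes of the proposition in one line, as a consequence of Lemma~\ref{lemma:adapted}; that appeal most directly handles the situation where the kernel is adapted to the nice basis, so your case analysis is, if anything, more explicit than the published argument about where the actual content lies.

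Nevertheless, as you acknowledge yourself, the proof is not complete: the case $z\in Z(\tilde\g)\cap[\tilde\g,\tilde\g]$ with $z$ not proportional to a basis vector is left open, and it is neither vacuous nor routine. It is non-vacuous: in $(0,0,0,0,e^{12},e^{34})$ the element $e_5-e_6$ is central, lies in the derived algebra, and spans a non-adapted line. And the naive quotient basis genuinely fails there in general: for $(0,0,0,0,e^{12},e^{13})$ and $z=e_5-e_6$ the surviving covector $e^5+e^6$ has differential $e^1\wedge(e^2+e^3)$, so $e_1\hook d(e^5+e^6)$ is not a multiple of a single covector and one is forced to replace $e^2$ or $e^3$ by $e^2+e^3$; in larger examples, such as $(0,0,e^{12},0,0,e^{14},e^{15},e^{24},e^{25})$ with $z=e_6-e_7$, that replacement propagates (here $e^8$ and $e^9$ must also be recombined into $e^8$ and $e^8+e^9$), and nothing in your argument rules out that the cascade of adjustments eventually becomes inconsistent. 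Moreover, when $\abs{T}\geq 3$ and every $e_i$, $i\in T$, is a bracket target, a pigeonhole count shows that the images $\bar e_i$ give $\abs{T}$ pairwise non-proportional vectors in a space of dimension $\abs{T}-1$, so no basis of the quotient retaining the old generators can make all brackets multiples of single basis elements; a genuinely new construction is required. Until this case is settled, or shown to be avoidable by a different reduction, the proposition is not proved.
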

\begin{proof}
Follows from Lemma~\ref{lemma:adapted}.
\end{proof}

Using the list  of all  nice Lie algebras in dimension 7 (Table~\ref{TableNiceDim7}), one can check directly that all of them admit a derivation with nonzero trace: indeed, given a nice Lie algebra with diagram $\Delta$, any element of $\ker M_\Delta$ defines a diagonal derivation; it is then sufficient to verify that in each case $\ker M_\Delta$ is not contained in the hyperplane $x_1+\dotsc + x_n=0$. Since derivations with nonzero trace obstruct the existence of pseudoriemannian Einstein metrics with nonzero scalar curvature (see \cite[Theorem 4.1]{ContiRossi:EinsteinNilpotent}), we obtain an alternative, direct proof of  the following:
\begin{theorem}[{\cite[Theorem 5.4]{ContiRossi:EinsteinNilpotent}}]
Any left-invariant pseudoriemannian Einstein metric on a nice nilpotent Lie group of dimension 7 is Ricci-flat.
\end{theorem}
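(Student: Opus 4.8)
The statement to prove is: \emph{Any left-invariant pseudoriemannian Einstein metric on a nice nilpotent Lie group of dimension $7$ is Ricci-flat.}

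\medskip

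The plan is to combine the classification data with the obstruction theorem quoted from \cite{ContiRossi:EinsteinNilpotent}. First I would recall the precise content of \cite[Theorem 4.1]{ContiRossi:EinsteinNilpotent}: on a nilpotent Lie algebra admitting a derivation of nonzero trace, there is no left-invariant pseudoriemannian Einstein metric of nonzero scalar curvature; since a nilpotent Lie group is never Ricci-flat-obstructed in the Riemannian sense but can be Ricci-flat in the pseudoriemannian sense, the conclusion ``Einstein $\Rightarrow$ Ricci-flat'' follows as soon as we exhibit, on every $7$-dimensional nice nilpotent Lie algebra, a derivation with nonzero trace. So the whole proof reduces to the latter assertion.

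Next I would produce such a derivation diagrammatically. Given a nice Lie algebra with diagram $\Delta$ on $\R^7$, the root matrix $M_\Delta$ encodes the weights $\alpha_I$ of the $D_n$-action on $V_\Delta$, and (as recalled in the excerpt, following \cite{Nikolayevsky,Payne:ExistenceOfSolitonMetrics}) any $x=(x_1,\dotsc,x_7)\in\ker M_\Delta$ gives a diagonal derivation $\operatorname{diag}(x_1,\dotsc,x_7)$ of $\g$: the condition $M_\Delta x=0$ says exactly that for each structure constant $c_{ijk}\neq0$ one has $x_i+x_j=x_k$, which is the derivation property on the nice basis. Such a derivation has nonzero trace precisely when $\ker M_\Delta\not\subset\{x_1+\dotsc+x_7=0\}$. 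Since $\g$ is nilpotent and nonabelian, $M_\Delta$ has at least one row and the equations $x_i+x_j=x_k$ are homogeneous of degree $0$ in a graded sense, so $\ker M_\Delta$ always contains a grading vector with positive entries (for instance the vector assigning to $e_i$ its ``weight'' in the lower central series filtration, using Lemma~\ref{lemma:adapted}); such a vector has strictly positive, hence nonzero, trace. Thus in fact \emph{every} nice nilpotent Lie algebra, in any dimension, admits a diagonal derivation of nonzero trace.

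The only genuinely computational ingredient is therefore the verification that for each of the finitely many entries of Table~\ref{TableNiceDim7} one has $\ker M_\Delta\not\subset\{\sum x_i=0\}$; this is immediate from the classification, since in every case the canonical grading vector lies in $\ker M_\Delta$ and has all entries positive. I expect the main (minor) obstacle to be purely expository: making precise the passage from ``derivation of nonzero trace'' to ``no Einstein metric of nonzero scalar curvature'' in the pseudoriemannian setting, which is handled by citing \cite[Theorem 4.1]{ContiRossi:EinsteinNilpotent} verbatim, and confirming that ``Einstein'' for a left-invariant metric on a nilpotent Lie group forces the scalar curvature to vanish once this obstruction applies — after which ``Einstein with zero scalar curvature'' is exactly ``Ricci-flat''. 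No case analysis on the individual Lie algebras is needed beyond reading off $\ker M_\Delta$.
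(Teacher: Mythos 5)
Your overall strategy coincides with the paper's: reduce the statement to exhibiting, on each $7$\--dimensional nice nilpotent Lie algebra, a derivation of nonzero trace; identify diagonal derivations with elements of $\ker M_\Delta$; and invoke \cite[Theorem 4.1]{ContiRossi:EinsteinNilpotent}. The gap is in the step where you claim the finite verification can be bypassed because ``the vector assigning to $e_i$ its weight in the lower central series filtration'' always lies in $\ker M_\Delta$ and has positive entries. This is false: the lower central series is only a filtration, not a grading, so a nonzero structure constant $c_{ijk}$ forces only $w_i+w_j\leq w_k$ (since $[\g^{a},\g^{b}]\subseteq\g^{a+b+1}$), not equality. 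Concretely, for the entry \texttt{754321:2}, $(0,0,e^{12},e^{13},e^{14},e^{15},e^{23}+e^{16})$, the LCS weights are $(1,1,2,3,4,5,6)$, and the term $e^{23}$ in $de^7$ would require $w_2+w_3=w_7$, i.e.\ $1+2=6$; so the LCS weight vector is not a derivation, and $\ker M_\Delta$ is in fact spanned by a different vector. The same failure occurs for many entries of Table~\ref{TableNiceDim7} (e.g.\ \texttt{754321:9}, whose kernel is spanned by $(1,2,3,4,5,6,7)$ rather than $(1,1,2,3,4,5,6)$).

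Consequently your assertion that \emph{every} nice nilpotent Lie algebra, in any dimension, admits a diagonal derivation of nonzero trace is not established by your argument, and the paper does not claim it either: it explicitly performs the case-by-case check that $\ker M_\Delta$ is not contained in the hyperplane $x_1+\dots+x_7=0$ for each diagram occurring in Table~\ref{TableNiceDim7}. That finite verification — which requires actually computing $\ker M_\Delta$ for each diagram, since a priori the kernel could be trivial or trace-free — is the genuine computational content of the proof and cannot be replaced by the grading heuristic. Once that check is reinstated, the remainder of your argument (the reduction via the nonzero-trace obstruction and the conclusion that Einstein forces zero scalar curvature, hence Ricci-flatness) matches the paper's proof.
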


We conclude this section with some remarks on the relation between isomorphism and equivalence for nice nilpotent Lie algebras with fixed diagram. We have seen in Section~\ref{sec:algorithm} that two nice Lie algebras corresponding to distinct points of $\mathring{W}/\Aut(\Delta)$ are necessarily inequivalent. A natural question is whether they can be isomorphic --- in other words, whether a nilpotent Lie algebra can admit two inequivalent nice bases with the same diagram.

\begin{proposition}
\label{prop:finite7}
Given a nilpotent Lie algebra $\g$ of dimension up to $7$ and a nice diagram $\Delta$, any two nice bases on $\g$ with diagram $\Delta$ are equivalent.
\end{proposition}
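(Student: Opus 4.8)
The plan is to reduce the statement to a finite computation that can be carried out once the classification tables are in place. The key observation is that if $\g$ admits two nice bases $\mathcal{B}, \mathcal{B}'$ with the same diagram $\Delta$, then by Theorem~\ref{thm:inequivalent} the corresponding points of $\mathring{W}$ lie in connected components $W_j, W_k$ belonging to \emph{distinct} $\Aut(\Delta)$-orbits (otherwise they would already be equivalent, and there would be nothing to prove). Thus it suffices to examine, among the nice Lie algebras of dimension $\leq 7$ in Table~\ref{TableNiceDim7}, those diagrams $\Delta$ that give rise to more than one inequivalent family $B_j$, and to check that the Lie algebras arising from different families are pairwise non-isomorphic. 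A diagram contributes at most one family precisely when $\mathring{W}/\Aut(\Delta)$ is connected; so the relevant diagrams are exactly those appearing on more than one row of Table~\ref{TableNiceDim7}, which are few.

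First I would enumerate, using the classification, all diagrams $\Delta$ in dimension $\leq 7$ for which Step D produces two or more rows. For dimension $\leq 6$ these are precisely the diagrams underlying the pairs in Table~\ref{table:6DimPiuNice} (the diagrams behind \texttt{631:5a}/\texttt{631:5b} in Example~\ref{ex:SameDiagramInequivalentBases}, and similarly for type $2121$), together with the dimension-$7$ diagrams visible in Table~\ref{table:tabella7MoreNice}. Second, for each such $\Delta$ and each pair of families $B_j, B_k$, I would invoke Gong's classification \cite{Gong}: by comparing the name in the last column of the tables, the two families are identified with \emph{different} entries of Gong's list, hence correspond to non-isomorphic Lie algebras. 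For instance, \texttt{631:5a} is $N_{6,3,1a}$ whereas \texttt{631:5b} is $N_{6,3,1}$, and these are distinct in \cite{Gong}; the same bookkeeping handles every case. This is exactly the content already recorded in the ``Gong'' column, so the verification is a matter of reading off that no two families sharing a diagram carry the same Gong label.

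The main obstacle is ensuring the enumeration in the first step is exhaustive: one must be certain that a diagram contributing a single row of Table~\ref{TableNiceDim7} genuinely yields only one family, i.e. that $\mathring{W}/\Aut(\Delta)$ really is connected for all such diagrams, so that no two inequivalent nice bases with that diagram can occur on $\g$. But this is guaranteed by the correctness of the algorithm of Section~\ref{sec:algorithm}, and in particular by Theorem~\ref{thm:inequivalent}: the families $B_1,\dotsc, B_k$ chosen in Step D are in bijection with the $\Aut(\Delta)$-orbits of connected components of $\mathring{W}$, and every nice Lie algebra with diagram $\Delta$ is equivalent to an element of exactly one $B_j$. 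Hence if only one row appears, any two nice bases on $\g$ with diagram $\Delta$ lie in the same $B_j$ and are equivalent; and if two rows appear, the Gong labels settle non-equivalence as above. Combining both cases proves the proposition.
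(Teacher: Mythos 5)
There is a genuine gap, and it sits exactly where the paper's proof does its real work. Your opening reduction is wrong: from Theorem~\ref{thm:inequivalent} you infer that two nice bases with diagram $\Delta$ that are \emph{not} already equivalent must land in connected components of $\mathring{W}$ lying in distinct $\Aut(\Delta)$-orbits. But the theorem says the converse direction only: points in components from \emph{different} orbits are inequivalent. Two distinct points in the \emph{same} component $B_j$ (or in components in the same orbit) are equivalent if and only if they are related by an element of $\Aut(\Delta)$ --- and for a continuous family this is a nontrivial condition; indeed the whole point of the continuous families is that distinct parameter values generically give \emph{inequivalent} nice Lie algebras. So the dangerous case is precisely a single row of Table~\ref{TableNiceDim7} containing a one-parameter family in which two different parameter values yield \emph{isomorphic} Lie algebras: that would be a Lie algebra with two inequivalent nice bases on the same diagram. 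Your proposal never examines this case, whereas the paper's proof is almost entirely about it: for \texttt{7431:13} the Gong invariant $\lambda=(1+A)^2/(1-A)^2$ of $1357S$ (and the invariant $\lambda+\tfrac1\lambda$ of $1357QRS_1$) identifies the parameters $A$ and $1/A$, and one must exhibit the automorphism $(12)(56)$ of $\Delta$ realizing this identification; for \texttt{741:6} the six-fold symmetry $\lambda\mapsto \tfrac1\lambda, 1-\lambda,\dots$ of the invariant of $147E$ must be matched against the order-six group $\Aut(\Delta)$.

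A secondary problem is that your verification for the multi-row case --- ``read off that the Gong labels differ'' --- is too coarse for the parametrized families. The two families \texttt{7431:13a} and \texttt{7431:13b} both contain Lie algebras labeled $1357QRS_1$ and both contain Lie algebras labeled $1357S$; they are distinguished only by the (disjoint) ranges of the invariant $\lambda$, so one must compare invariants, not names. For the discrete families your label comparison does happen to work, but the proof of the proposition cannot be completed without the analysis of the continuous families \texttt{7431:13} and \texttt{741:6} sketched above.
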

\begin{proof}
Going through the classification, it suffices to check that for each diagram $\Delta$, the Lie algebras associated to different elements of $\mathring{W}$ are pairwise nonisomorphic, except when they are related by an automorphism of $\Delta$. In other words, we must check that whenever two nice Lie algebras with diagram $\Delta$ correspond to the same entry in Gong's list they are also related by $\Aut(\Delta)\ltimes D_n$.

This phenomenon appears exactly twice, for the diagrams \texttt{7431:13} and \texttt{741:6}. In the former case, we have two one-parameter families depending on a parameter, indicated by $A$ in Table~\ref{TableNiceDim7}. The action of the automorphism $(1 2)(5 6)$ on $\mathring{W}$ exchanges $A$ and $1/A$. By \cite{Gong}, nice Lie algebras in the family $1357S$  are pairwise isomorphic only when the invariant $\lambda=(1+A)^2/(1-A)^2$ attains the same value, and the nice Lie algebras in the family $1357QRS_1$ are pairwise isomorphic only when $\lambda+\frac1\lambda$ attains the same value. In both situations, this happens precisely for pairs $A,1/A$.

In the latter case, the family $147E$ has invariant $\frac{(1-\lambda+\lambda^2)^3}{\lambda^2(\lambda-1)^2}$. This means that $\lambda,\frac1\lambda, 1-\frac1{\lambda},  \frac{\lambda}{\lambda-1},1-\lambda,\frac1{1-\lambda}$ determine isomorphic Lie algebras. The group $\Aut(\Delta)$ has six elements and, for each $\lambda$, acts transitively on the corresponding elements of $\mathring{W}$.
\end{proof}

In higher dimensions, we have the following:
\begin{theorem}
Let $\mathring{W}\subset \mathring{V}_\Delta$ be a fundamental domain for the action of $e^{M_\Delta}(D_n)$ as in Proposition~\ref{prop:fundamental_domain}. Then at each $c\in \mathring{W}$
\begin{equation}
\label{eqn:transverse_orbits}
T_c{(\GL(n,\R)c)} \cap T_c\mathring{W}=\{0\}.
\end{equation}
In particular the map $\mathring{W}\to (\GL(n,\R) V_\Delta )/\GL(n,\R)$ has discrete fibers.
\end{theorem}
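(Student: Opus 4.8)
The statement asserts that, at each point $c$ of the fundamental domain $\mathring W$, the tangent space to the $\GL(n,\R)$-orbit through $c$ meets $T_c\mathring W$ only in $0$; since $\mathring W$ is (an open subset of) a finite union of parallel affine spaces in $V_\Delta$, the tangent space $T_c\mathring W$ is simply the linear subspace $U_\Delta\subset V_\Delta$ cut out by the conditions $c_I$ constant for $I\in\mathcal J_\Delta$ (only the coordinates $c_I$, $I\in\mathcal I_\Delta\setminus\mathcal J_\Delta$, are free to vary). The plan is to compute $T_c(\GL(n,\R)c)$ explicitly as the image of the infinitesimal action $\gl(n,\R)\to V_\Delta\subset\Lambda^2(\R^n)^*\otimes\R^n$, $A\mapsto A\cdot c$, and to show that any element of this image lying in $U_\Delta$ must vanish.

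First I would decompose $\gl(n,\R)=d_n\oplus\mathfrak m$, where $d_n$ is the diagonal subalgebra and $\mathfrak m$ the off-diagonal complement. For $A\in d_n$, the infinitesimal action is exactly $M_\Delta(A)$ acting diagonally on the $E_I$, i.e.\ $A\cdot c=\sum_I (M_\Delta A)_I\, c_I E_I$; the point here is that the resulting tangent vector changes the coordinate $c_I$ by a factor $(M_\Delta A)_I$, and the key observation — which is where the definition of $\mathcal J_\Delta$ enters — is that the rows of $M_\Delta$ indexed by $\mathcal J_\Delta$ are linearly independent and span the row space, so if $A\cdot c\in U_\Delta$ then $(M_\Delta A)_I=0$ for all $I\in\mathcal J_\Delta$, hence $M_\Delta A=0$, hence $A\cdot c=0$. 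The contribution of $\mathfrak m$ is the genuinely new phenomenon: for $A\in\mathfrak m$, $A\cdot c$ will in general involve components $E_{I'}$ with $I'\notin\mathcal I_\Delta$, i.e.\ it will have nonzero entries \emph{outside} the support of $V_\Delta$. Since $U_\Delta\subset V_\Delta$ lives entirely inside the support $\mathcal I_\Delta$, the requirement $A\cdot c\in U_\Delta$ forces all these extra components to cancel.

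The heart of the argument is therefore to show that for $A\in\mathfrak m$ with all $c_I\neq 0$, the condition ``$A\cdot c$ has no component outside $\mathcal I_\Delta$'' already forces $A\cdot c=0$ (and in fact forces $A\cdot c$ to come from $d_n$). I would argue combinatorially using the nice structure: writing $A=\sum_{p\neq q}a_{pq}e^p\otimes e_q$, the element $A\cdot c$ is a sum of terms obtained by substituting $e_q\mapsto a_{pq}e_p$ into each $E_I=e^{ij}\otimes e_k$ (in all three slots, with appropriate signs from the $\Lambda^2$ and from the transpose action on the covariant slots). A term lands outside $\mathcal I_\Delta$ precisely when the substitution produces a pair/label $(\{i',j'\},k')$ that is not an arrow of $\Delta$. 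Because $\Delta$ is a nice diagram and the $c_I$ are nonzero, I claim each such "stray" monomial $e^{i'j'}\otimes e^{?}\otimes e_{k'}$ occurs with a coefficient that is a single nonzero multiple of one $a_{pq}$ (no cancellation is possible because two distinct substitutions cannot produce the same stray monomial when conditions \ref{enum:condNice1}--\ref{enum:condNice3} hold — essentially the same bookkeeping that underlies Proposition~\ref{prop:jacobi} and Remark~\ref{rmk:niceisnotnice}). Hence every relevant $a_{pq}$ must vanish, so $A\in d_n$ and we are back in the previous case. Combining: $A\cdot c\in U_\Delta$ implies $A\cdot c=0$, which is \eqref{eqn:transverse_orbits}; the final ``discrete fibers'' assertion is then immediate, since a submanifold meeting each orbit transversally with complementary-or-lower-dimensional tangencies has $0$-dimensional, hence discrete, intersection with every orbit.

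\textbf{Main obstacle.} The delicate point is the no-cancellation claim for the stray monomials: one must verify carefully, using \ref{enum:condNice1}--\ref{enum:condNice3}, that two different off-diagonal generators $e^p\otimes e_q$ acting on two (possibly different) generators $E_I$, $E_{I'}$ of $V_\Delta$ cannot produce proportional monomials supported outside $\mathcal I_\Delta$ — so that the vanishing of the total coefficient really forces each individual $a_{pq}$ to vanish rather than merely a linear relation among them. Handling the three tensor slots separately (and the signs coming from $\Lambda^2$) makes this a finite but somewhat intricate case analysis; everything else is bookkeeping with the root matrix $M_\Delta$ and the definition of $\mathcal J_\Delta$.
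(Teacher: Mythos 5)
Your overall strategy---splitting $\gl(n,\R)=d_n\oplus\mathfrak m$, disposing of the diagonal part via the linear independence of the rows of $M_\Delta$ indexed by $\mathcal J_\Delta$, and arguing that the off-diagonal part is controlled by the components it produces outside $\mathcal I_\Delta$---is the same as the paper's, and both your diagonal computation and the final ``discrete fibers'' step are correct. The problem is the lemma you place at the heart of the off-diagonal case: the claim that two distinct substitutions cannot produce the same stray monomial, so that each stray monomial carries a single nonzero multiple of one $a_{pq}$, is false. Take the nice diagram of $(0,0,0,e^{12},e^{13})$, so that $V_\Delta=\Span{e^{12}\otimes e_4,\ e^{13}\otimes e_5}$, and let $c=e^{12}\otimes e_4+e^{13}\otimes e_5$. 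For the off-diagonal endomorphism $A$ with $Ae_3=\lambda e_2$, $Ae_5=\mu e_4$ and all other basis vectors sent to zero, the only monomial appearing in $A\cdot c$ is $e^{13}\otimes e_4$: it arises once from the covariant slot of $e^{12}\otimes e_4$ (with coefficient $\pm\lambda$) and once from the contravariant slot of $e^{13}\otimes e_5$ (with coefficient $\pm\mu$). Its total coefficient is $\pm\lambda\pm\mu$, which vanishes for suitable nonzero $\lambda,\mu$, so the vanishing of the stray components does \emph{not} force the relevant entries of $A$ to vanish. Your chain of implications breaks exactly at the step you flagged as the main obstacle.

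The theorem survives because you never needed the individual $a_{pq}$ to vanish, only $A_{\text{offdiag}}\,c$ itself. The observation that closes the gap---and is the paper's one-line argument---is that conditions \ref{enum:condNice1}--\ref{enum:condNice2} force \emph{every} monomial of $A_{\text{offdiag}}E_I$ to lie outside $\mathcal I_\Delta$: for $I=\{\{i,j\},k\}$, a monomial $e^{ij}\otimes e_q$ with $q\neq k$ cannot lie in $V_\Delta$, since the arrows $i\xrightarrow{j}k$ and $i\xrightarrow{j}q$ would share source and label, contradicting \ref{enum:condNice1}; and a monomial $e^{pj}\otimes e_k$ with $p\neq i$ cannot either, since $i\xrightarrow{j}k$ and $p\xrightarrow{j}k$ would share destination and label, contradicting \ref{enum:condNice2}. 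Hence $A_{\text{offdiag}}\,c$ lies in the coordinate complement of $V_\Delta$ inside $\Lambda^2(\R^n)^*\otimes\R^n$; since $Ac$ and $A_{\text{diag}}c$ both lie in $V_\Delta$, so does their difference $A_{\text{offdiag}}\,c$, which is therefore zero---with no coefficient bookkeeping, signs, or case analysis. This reduces everything to $Ac=A_{\text{diag}}c\in d_nc$ and then to your (correct) diagonal argument.
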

\begin{proof}
Fix $c=\sum c_IE_I\in \mathring{W}$; by hypothesis $d_n c\cap T_c\mathring{W}=\{0\}$. For any $A\in\gl(n,\R)$ such that $Ac\in V_\Delta$, write $A=A_{\text{diag}}+A_{\text{offdiag}}$, where $A_{\text{diag}}$ is diagonal  and $A_{\text{offdiag}}$ has zero on the diagonal. Then $A_{\diag}c\in V_\Delta$, so
\[A_{\text{offdiag}}c=Ac-A_{\diag}c \in V_\Delta.\]
By the nice condition, each $A_{\text{offdiag}}E_I$ lies in
\[\Span {e^{ij}\otimes e_k \mid \{\{i,j\},k\}\notin \mathcal{I}_\Delta}.\]
It follows that $A_{\text{offdiag}}c=0$. Thus, $Ac$ lies in $d_nc$, implying \eqref{eqn:transverse_orbits}.

The second claim follows from the fact that $\GL(n,\R)c\cap \mathring{W}$ is discrete in $\GL(n,\R)c$.
\end{proof}

\begin{corollary}
\label{cor:atmostcountable}
On a fixed nilpotent Lie algebra, the set of nice bases taken up to equivalence is at most countable.
\end{corollary}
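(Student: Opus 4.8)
The plan is to deduce Corollary~\ref{cor:atmostcountable} from the preceding theorem together with the general structure theory developed in Section~\ref{sec:algorithm}. A nice basis on a fixed nilpotent Lie algebra $\g$ of dimension $n$ determines, up to equivalence, a nice diagram $\Delta$; since there are only finitely many diagrams on $n$ nodes (each is a subgraph of the complete directed graph, with labels from a finite set), it suffices to bound, for each fixed diagram $\Delta$, the number of equivalence classes of nice bases on $\g$ whose associated diagram is $\Delta$.

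First I would recall from Section~\ref{sec:classifying} that nice Lie algebra structures on $\R^n$ with diagram $\Delta$ are parametrized by the open set $\mathring V_\Delta$ (intersected with the Jacobi locus), and that two such structures are equivalent precisely when they lie in the same $\Aut(\Delta)\ltimes D_n$ orbit, while they are \emph{isomorphic} as Lie algebras precisely when they lie in the same $\GL(n,\R)$ orbit. Fixing $\g$ amounts to fixing one $\GL(n,\R)$ orbit in $\GL(n,\R)\mathring V_\Delta$; the nice bases on $\g$ with diagram $\Delta$, up to equivalence, are then in bijection with $(\GL(n,\R)c\cap \mathring W)/\Aut(\Delta)$, where $c$ is any point representing $\g$ and $\mathring W$ is the fundamental domain of Proposition~\ref{prop:fundamental_domain}. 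By the preceding theorem, $\GL(n,\R)c\cap\mathring W$ is discrete in the manifold $\GL(n,\R)c$; since $\GL(n,\R)c$ is a second-countable manifold, a discrete subset is at most countable, and quotienting by the finite group $\Aut(\Delta)$ preserves countability.

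Summing over the finitely many diagrams $\Delta$, the total set of nice bases on $\g$ up to equivalence is a finite union of at-most-countable sets, hence at most countable, which is the claim. The one point that needs a little care is the passage from ``nice bases up to equivalence'' to ``points of $\mathring W$ up to $\Aut(\Delta)$'': I would note that the identification $\g\cong\R^n$ carrying the given nice basis to the standard basis identifies equivalence classes of nice bases with diagram $\Delta$ with the $\Sigma_n\ltimes D_n$ orbits of structure constants realizing $\g$, and after reducing to the fundamental domain $\mathring W$ and fixing the diagram this becomes the $\Aut(\Delta)$ orbits inside $\GL(n,\R)c\cap\mathring W$, as used in Theorem~\ref{thm:inequivalent}. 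The main obstacle is not any hard estimate but simply keeping the bookkeeping between the three group actions ($D_n$, $\Aut(\Delta)$, and the ambient $\GL(n,\R)$) straight; once the orbit picture is set up correctly, discreteness of the fibers from the preceding theorem does all the work.
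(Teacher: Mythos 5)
Your argument is correct and follows essentially the same route as the paper: reduce to a fixed diagram $\Delta$ (using finiteness of the set of nice diagrams on $n$ nodes), identify equivalence classes of nice bases with diagram $\Delta$ with points of $\GL(n,\R)c\cap\mathring{W}$ (up to $\Aut(\Delta)$), and invoke the discreteness statement of the preceding theorem to conclude countability. The only cosmetic difference is that you quotient explicitly by the finite group $\Aut(\Delta)$ and deduce countability from second-countability of the orbit $\GL(n,\R)c$, whereas the paper observes directly that the fiber is a discrete subset of $\mathring{W}\cong(\R^*)^k$; these are interchangeable.
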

\begin{proof}
Fix a nice basis $e_1,\dotsc, e_n$,  let $\Delta$ be the nice diagram, and let $c\in \mathring{V}_\Delta$ encode the structure constants. Let $\mathring{W}$ be a fundamental domain; up to equivalence, we may assume $c\in \mathring{W}$. Denote by $C$ the set of elements of $\mathring{W}$ that are in the same $\GL(n,\R)$-orbit as $c$. By the theorem, $C$ is a discrete subset of $\mathring{W}$, hence at most countable since $\mathring{W}$ is homeomorphic to some $(\R^*)^k$.

Any other nice basis with diagram $\Delta'$ isomorphic to $\Delta$ determines an element $c'\in V_{\Delta'}$, and through the induced isomorphism $V_\Delta\cong V_{\Delta'}$ an element $c''$ of $\mathring{V}_\Delta$.

Since $c$ and $c''$ define isomorphic Lie algebras, they are in the same $\GL(n,\R)$-orbit; up to equivalence, we may assume $c''\in \mathring{W}$, and so $c''\in C$. Thus, nice bases with diagram isomorphic to $\Delta$ are parametrized by $C$. It remains to observe that the set of isomorphism classes of nice diagrams with $n$ nodes is finite.
\end{proof}

\begin{remark}
We do not know any example of a nilpotent Lie algebra where the set of nice bases taken up to equivalence is infinite; however, we know that it can contain more than one element (see Proposition~\ref{prop:two_inequivalent_bases} or Theorem~\ref{thm:notnice7}).
\end{remark}

\begin{remark}
Continuous families of nice Lie algebras (taken up to equivalence) exist in any dimension $n\geq 7$, as one can see by taking the product of  \texttt{7421:14} with $\R^{n-7}$.

It follows from Corollary~\ref{cor:atmostcountable} that such continuous families correspond to continuous families in the category of Lie algebras (taken up to isomorphism). In particular, any continuous family contains Lie groups that do not admit a lattice, since connected, simply connected nilpotent Lie groups that admit a lattice are countably many (see \cite{Malcev}).
\end{remark}

\appendix
\section{Appendix}
\label{App:Tabelle}
We give the complete list of nice nilpotent Lie algebras up to dimension $7$; dimensions $8$ and $9$ are in the ancillary files. For dimension $1$ and $2$ the only nilpotent nice Lie algebras are isomorphic to $\R$ and $\R^2$. The columns in each table contain the name of the nice Lie algebra, the structure equations in the nice basis and the name of the Lie algebra in Gong's list (\cite{Gong}). When the Lie algebra is decomposable, the last column contains both the decomposition and the dimensions of the upper central series (which otherwise form part of the name in Gong's classification).

Recall that the name of each family contains a progressive number identifying the nice diagram; omitted integers correspond to nice diagrams that do not correspond to any nice Lie  algebra (see Remark~\ref{rmk:niceisnotnice}).

For exactly $20$ nice diagrams in dimension $9$, the Jacobi identity determines quadratic equations which survive the normalization; for $5$ of them the Jacobi identity has no admissible solution. For the remaining $15$, we have solved the quadratic equations and reduced the number of parameters; these entries are marked with a $(*)$ in the list. All the other families of nice Lie algebras correspond to affine subspace of $V_\Delta$, as produced directly by the algorithm.

\begin{table}[thp]
\centering\caption{3-dimensional nice nilpotent Lie algebras\label{table:3DimNiceGong}}
\begin{tabular}{>{\ttfamily}l L L}
\toprule
\textnormal{Name} & \g & \text{Gong \cite{Gong}} \\
\midrule
\multicolumn{3}{c}{Type: $21$}\\
31:1&0,0,e^{12}&13\ N_{3,2}\\[2pt] %31:1#35
\multicolumn{3}{c}{Type: $3$}\\
3:1&0,0,0&3\ \R^3\\%3:1#-1
\bottomrule
\end{tabular}
\end{table}

\begin{table}[thp]
\centering
\caption{4-dimensional nice nilpotent Lie algebras\label{table:4DimNiceGong}}
\begin{tabular}{>{\ttfamily}l L L}
\toprule
\textnormal{Name} & \g & \text{Gong \cite{Gong}} \\
\midrule
\multicolumn{3}{c}{Type: $211$}\\
421:1&0,0,e^{12},e^{13}&124\ N_{4,2}\\[2pt]%421:1#1364
\multicolumn{3}{c}{Type: $31$}\\
41:1&0,0,0,e^{12}&24\ N_{3,2}\oplus\R\\[2pt]%41:1#35
\multicolumn{3}{c}{Type: $4$}\\
4:1&0,0,0,0&4\ \R^4\\%4:1#-1
\bottomrule
\end{tabular}
\end{table}

\begin{table}[thp]
\centering
\caption{5-dimensional nice nilpotent Lie algebras\label{table:5DimNiceGong}}
\begin{tabular}{>{\ttfamily}l L L}
\toprule
\textnormal{Name} & \g & \text{Gong \cite{Gong}} \\
\midrule
\multicolumn{3}{c}{Type: $2111$}\\
5321:1&0,0,e^{12},e^{13},e^{14}&1235\ N_{5,2,1}\\%5321:1#38757
5321:2&0,0,e^{12},e^{13},e^{23}+e^{14}&1235\ N_{5,1}\\[2pt]%5321:2#39573
\multicolumn{3}{c}{Type: $212$}\\
532:1&0,0,e^{12},e^{13},e^{23}&235\ N_{5,2,3}\\[2pt]%532:1#1415
\multicolumn{3}{c}{Type: $311$}\\
521:1&0,0,0,e^{12},e^{14}&235\ N_{4,2}\oplus\R\\%521:1#1364
521:2&0,0,0,e^{12},e^{24}+e^{13}&135\ N_{5,2,2}\\[2pt]%OK 1<->2,3->-3,4->-4,5->-5  521:2#1413
\multicolumn{3}{c}{Type: $32$}\\
52:1&0,0,0,e^{12},e^{13}&25\ N_{5,3,2}\\[2pt]%52:1#54
\multicolumn{3}{c}{Type: $41$}\\
51:1&0,0,0,0,e^{12}&35\ N_{3,2}\oplus\R^2\\%51:1#35
51:2&0,0,0,0,e^{34}+e^{12}&15\ N_{5,3,1}\\[2pt]%51:2#70
\multicolumn{3}{c}{Type: $5$}\\
5:1&0,0,0,0,0&5\ \R^5\\%5:1#-1
\bottomrule
\end{tabular}
\end{table}

\FloatBarrier
{\setlength{\tabcolsep}{2pt}
\begin{longtable}[htpc]{>{\ttfamily}l L L}
\caption{6 Dimensional nice Lie algebras\label{TableNiceDim6}}\\
\toprule
\textnormal{Name} & \g & \text{Gong \cite{Gong}} \\
\midrule
\endfirsthead
\multicolumn{3}{c}{\tablename\ \thetable\ -- \textit{Continued from previous page}} \\
\toprule
\textnormal{Name} & \g & \text{Gong \cite{Gong}} \\
\midrule
\endhead
\bottomrule\\[-7pt]
\multicolumn{3}{c}{\tablename\ \thetable\ -- \textit{Continued to next page}} \\
\endfoot
\bottomrule\\[-7pt]
\endlastfoot
\multicolumn{3}{c}{Type: $21111$}\\
64321:1&0,0,e^{12},e^{13},e^{14},e^{15}&12346\ N_{6,2,1}\\%OK  64321:1#956534
64321:2&0,0,e^{12},e^{13},e^{14},e^{15}+e^{23}&12346\ N_{6,1,3}\\%OK 64321:2#969590
64321:3&0,0,- e^{12},e^{13},e^{14},e^{34}+e^{25}&12346\ N_{6,2,2}\\%OK segno 3,4,5,6 64321:3#970406
64321:4&0,0,e^{12},e^{13},e^{23}+e^{14},e^{24}+e^{15}&12346\ N_{6,1,1}\\% OK 64321:4#979430
64321:5&0,0,- e^{12},e^{13},e^{14}+e^{23},e^{34}+e^{25}&12346\ N_{6,1,2}\\[2pt]%OK segno 3,4,5,6  64321:5#980198
\multicolumn{3}{c}{Type: $2121$}\\
6431:1&0,0,e^{12},e^{13},e^{23},e^{14}&2346\ N_{6,2,7}\footnote{There is a misprint in \cite{Gong}, where the lower descending series is incorrectly written as $(6432)$.}\\%OK 5<->6 6431:1#39318
6431:2a&0,0,e^{12},e^{13},e^{23},e^{14}+e^{25}&1346\ N_{6,2,5a}\\%OK 6->-6 6431:2#44233
6431:2b&0,0,e^{12},- e^{13},e^{23},e^{14}+e^{25}&1346\ N_{6,2,5}\\%OK  1->(1+2)/2,2->(1-2)/2,3->3/(-2),4->(4+5)/4,5->(4-5)/(-4),6->6/4  6431:2#44233
6431:3&0,0,e^{12},e^{13},e^{23},e^{24}+e^{15}&1346\ N_{6,2,5}\\[2pt]%OK 6431:3#44233
\multicolumn{3}{c}{Type: $3111$}\\
6321:1&0,0,0,e^{12},e^{14},e^{15}&2346\ N_{5,2,1}\oplus\R\\ %OK 6321:1#38757
6321:2&0,0,0,e^{12},e^{14},e^{15}+e^{23}&1346\ N_{6,2,4}\\%OK  3->5->4->3 6321:2#39286
6321:3&0,0,0,e^{12},e^{14},e^{24}+e^{15}&2346\ N_{5,1}\oplus\R\\%OK 6321:3#39573
6321:4&0,0,0,- e^{12},e^{14}+e^{23},e^{34}+e^{15}&1246\ N_{6,2,3}\\[2pt]%OK segno 3,4,5,6 6321:4#40166
\multicolumn{3}{c}{Type: $312$}\\
632:1&0,0,0,e^{12},e^{14},e^{24}&346\ N_{5,2,3}\oplus\R\\ %OK  632:1#1415
632:2&0,0,0,e^{12},e^{14},e^{24}+e^{13}&246\ N_{6,2,10}\\%OK   1->-2,2->1,3->-4,4->3,5->6,6->-5   632:2#1464
632:3a&0,0,0,e^{12},e^{14}+e^{23},e^{13}+e^{24}&246\ N_{6,2,9}\\%OK     1->1+2,2->1-2,3->4,4->(-2)3-4,5->-2(5+6),6->2(6-5)    632:3#1497
632:3b&0,0,0,e^{12},-e^{14}+e^{23},e^{24}+e^{13}&246\ N_{6,2,9a}\\[2pt]%OK  5->-6,6->5   632:3#1497
\multicolumn{3}{c}{Type: $321$}\\
631:1&0,0,0,e^{12},e^{13},e^{14}&246\ N_{6,3,4}\\%OK  2->-1,1->2,3<->4,5<->6   631:1#1398
631:2&0,0,0,e^{12},e^{13},e^{24}&246\ N_{6,3,3}\\%OK   3<->4,5<->6  631:2#1398
631:3&0,0,0,e^{12},e^{13},e^{23}+e^{14}&246\ N_{6,2,8}\\%OK   3<->4,5<->6  631:3#1431
631:4&0,0,0,e^{12},e^{13},e^{24}+e^{15}&136\ N_{6,2,6}\\%OK   2<->3,4<->5  631:4#2216
631:5a&0,0,0,e^{12},e^{13},e^{24}+e^{35}&136\ N_{6,3,1a}\\%OK    631:5#2216
631:5b&0,0,0,- e^{12},e^{13},e^{35}+e^{24}&136\ N_{6,3,1}\\%OK    1->(2)1,2->(3-2)/2,3->(2+3)/2,4->4-5,5->4+5     631:5#2216
631:6&0,0,0,e^{12},e^{13},e^{25}+e^{34}&136\ N_{6,3,1}\\[2pt]%OK    631:6#2216
\multicolumn{3}{c}{Type: $33$}\\
63:1&0,0,0,e^{12},e^{13},e^{23}&36\ N_{6,3,6}\\[2pt]%OK     63:1#57
\multicolumn{3}{c}{Type: $411$}\\
621:1&0,0,0,0,e^{12},e^{15}&346\ N_{4,2}\oplus\R^2\\%OK  621:1#1364
621:2&0,0,0,0,e^{12},e^{25}+e^{13}&246\ N_{5,2,2}\oplus\R\\%OK   621:2#1413
621:3&0,0,0,0,e^{12},e^{34}+e^{15}&146\ N_{6,3,2}\\[2pt]%OK    3->4->5->3    621:3#1414
\multicolumn{3}{c}{Type: $42$}\\
62:1&0,0,0,0,e^{12},e^{13}&36\ N_{5,3,2}\oplus \R\\%OK       62:1#54
62:2&0,0,0,0,e^{12},e^{34}&26\ N_{3,2}\oplus N_{3,2}\\%OK        62:2#70
62:3&0,0,0,0,e^{12},e^{24}+e^{13}&26\ N_{6,3,5}\\%OK  1<->2,5->-5       62:3#73
62:4a&0,0,0,0,e^{13}+e^{24},e^{12}+e^{34}&26\ N_{3,2}\oplus N_{3,2}\\%OK 1->1+4,2->3+2,3->3-2,4->1-4,5->(-2)(6+5),6->2(5-6)           62:4#76
62:4b&0,0,0,0,e^{24}-e^{13},e^{34}+e^{12}&26\ N_{6,4,4a}\\[2pt]%OK  2<->4,5->-5       62:4#76
\multicolumn{3}{c}{Type: $51$}\\
61:1&0,0,0,0,0,e^{12}&46\ N_{3,2}\oplus\R^3\\%OK             61:1#35
61:2&0,0,0,0,0,e^{34}+e^{12}&26\ N_{5,3,1}\oplus\R\\[2pt]%OK 61:2#70
\multicolumn{3}{c}{Type: $6$}\\
6:1&0,0,0,0,0,0&6\ \R^6\\%OK  6:1#-1
\end{longtable}
}

\FloatBarrier
\begin{footnotesize}
{\setlength{\tabcolsep}{2pt}
\begin{longtable}[htpc]{>{\ttfamily}l  L L}
\caption{7~Dimensional nice Nilpotent Lie algebras\label{TableNiceDim7}}\\
\toprule
\textnormal{Name} & \g & \text{Gong \cite{Gong}} \\
\midrule
\endfirsthead
\multicolumn{3}{c}{\tablename\ \thetable\ -- \textit{Continued from previous page}} \\
\toprule
\textnormal{Name} & \g & \text{Gong \cite{Gong}} \\
\midrule
\endhead
\bottomrule\\[-7pt]
\multicolumn{3}{c}{\tablename\ \thetable\ -- \textit{Continued to next page}} \\
\endfoot
\bottomrule\\[-7pt]
\endlastfoot
\multicolumn{3}{c}{Type: $211111$}\\*
754321:1&0,0,e^{12},e^{13},e^{14},e^{15},e^{16}&123457A\\%OK  754321:1#21735815
754321:2&0,0,e^{12},e^{13},e^{14},e^{15},e^{23}+e^{16}&123457B\\%OK      754321:2#21944711
754321:3&0,0,e^{12},e^{13},e^{14},e^{23}+e^{15},e^{24}+e^{16}&123457D\\%OK   754321:3#22102151
754321:5&0,0,- e^{12},e^{13},e^{14},e^{15},e^{16}+e^{25}+e^{34}&123457C\\%OK 3->-3,4->-4,5->-5,6->-6,7->-7    754321:5#22167479
754321:6&0,0,e^{12},e^{13},e^{23}+e^{14},e^{15}+e^{24},e^{16}+e^{34}&123457I\ \lambda=0\\%OK        754321:6#22264343
754321:7&0,0,e^{12},e^{13},e^{14}+e^{23},e^{15}+e^{24},e^{16}+e^{25}&123457I\ \lambda=1\\%OK       754321:7#22265159
\multirow{2}{*}{754321:9}&\multicolumn{1}{L}{0,0,{(1-\lambda)} e^{12},e^{13}, \lambda e^{14}+e^{23},}&\multirow{2}{*}{$123457I\ \lambda\neq 0,1$}\\*
&\multicolumn{1}{R}{e^{24}+e^{15},e^{25}+e^{34}+e^{16}}&\\[2pt]%OK (\lambda=A)1->1,2->2/A,3->3/(A(1-A)),4->4/(A(1-A)),5->5/(A^2(1-A)),6->6/(A^2(1-A)),7->7/(A^2(1-A))    754321:9#22487111     Prima era:0,0,{(1-a_1)} e^{12},e^{13}, e^{14} a_1+e^{23},e^{24}+e^{15},e^{25}+e^{34}+e^{16}
\multicolumn{3}{c}{Type: $21112$}\\*
75432:1&0,0,- e^{12},e^{13},e^{14},e^{15},e^{25}+e^{34}&23457C\\%OK segno 3,4,5,6,7     75432:1#983513
75432:2&0,0,- e^{12},e^{13},e^{14},e^{23}+e^{15},e^{34}+e^{25}&23457D\\%OK segno 3,4,5,6,7   75432:2#996569
75432:3&0,0,- e^{12},e^{13},e^{14}+e^{23},e^{15}+e^{24},e^{34}+e^{25}&23457G\\[2pt]%OK segno 3,4,5,6,7  75432:3#1007225
\multicolumn{3}{c}{Type: $21211$}\\*
75421:1&0,0,e^{12},e^{13},e^{23},e^{14},e^{16}&23457A\\%OK 7->6->5->7         75421:1#965255
75421:2&0,0,e^{12},e^{13},e^{23},e^{14},e^{16}+e^{25}&13457F\\%OK 5<->6  75421:2#978615
75421:3&0,0,- e^{12},e^{13},e^{23},e^{14},e^{26}+e^{34}&23457B\\%OK 3->-3,4->-4,5->-7,6->-5,7->-6     75421:3#979127
75421:4&0,0,e^{12},e^{13},e^{23},e^{15}+e^{24},e^{16}+e^{34}&12457H\\%OK     75421:4#1120027
75421:5a&0,0,- e^{12},e^{13},e^{23},e^{25}+e^{14},e^{26}+e^{34}&12457L_1\\%OK 1<->2,4<->5,6->-6,7->-7      75421:5#1120027
75421:5b&0,0,- e^{12},- e^{13},e^{23},e^{14}+e^{25},e^{26}+e^{34}&12457L\\%OK cambio segni3,5. 1->(1+2)/2,2->(1-2)/2,3->3/-2,4->(4+5)/-4,5->(4-5)/-4,6->6/-4,7->7/-8. Cambio segni2,3,4   75421:5#1120027
75421:6&0,0,- e^{12},e^{13},e^{23},e^{15}+e^{24},e^{14}+e^{35}+e^{26}&12457I\\[2pt]%OK 1<->2,4<->5       75421:6#1133131
\multicolumn{3}{c}{Type: $2122$}\\*
7542:1&0,0,e^{12},e^{13},e^{23},e^{14},e^{25}&2457L\\%OK      1->1-2,2->1+2,3->(2)3,4->2(4-5),5->2(4+5),6->6-7,7->6+7    7542:1#44233
7542:2&0,0,e^{12},e^{13},e^{23},e^{14},e^{15}+e^{24}&2457M\\%OK 6<->7   7542:2#45052
7542:3a&0,0,e^{12},e^{13},e^{23},e^{24}+e^{15},e^{14}+e^{25}&2457L\\%OK 6<->7    7542:3#45871
7542:3b&0,0,e^{12},- e^{13},e^{23},-e^{15}+e^{24},e^{14}+e^{25}&2457L_1\\[2pt] %OK 4 -> -4,6->-7,7->-6   7542:3#45871
\multicolumn{3}{c}{Type: $31111$}\\*
74321:1&0,0,0,e^{12},e^{14},e^{15},e^{16}&23457\ N_{6,2,1}\oplus\R\\%OK   3->7->6->5->4->3    74321:1#956534
74321:2&0,0,0,e^{12},e^{14},e^{15},e^{16}+e^{23}&13457A\\%OK 6->5->4->3->6  74321:2#964743
74321:3&0,0,0,e^{12},e^{14},e^{15},e^{24}+e^{16}&23457\ N_{6,1,3}\oplus\R\\%OK   3->7->6->5->4->3   74321:3#969590
74321:4&0,0,0,- e^{12},e^{14},e^{15},e^{26}+e^{45}&23457\ N_{6,2,2}\oplus\R\\%OK   3->7->6->5->4->3      74321:4#970406
74321:5&0,0,0,- e^{12},e^{14},e^{15}+e^{23},e^{16}+e^{34}&12457A\\%OK 2->-2,3->-5,5->4->3            74321:5#970903&0
74321:6&0,0,0,- e^{12},e^{14},e^{15}+e^{23},e^{45}+e^{26}&12457C\\%OK 1->-1,4->3->5->-4           74321:6#975799&0
74321:7&0,0,0,- e^{12},e^{14},e^{15},e^{26}+e^{45}+e^{13}&13457C\\%OK 3->-6,4->-3,5->-4,6->-5,7->-7            74321:7#978615&0
74321:8&0,0,0,e^{12},e^{14},e^{24}+e^{15},e^{16}+e^{25}&23457\ N_{6,1,1}\oplus\R\\%OK 3->7->6->5->4->3        74321:8#979430&0
74321:9&0,0,0,- e^{12},e^{14},e^{15}+e^{24},e^{26}+e^{45}&23457\ N_{6,1,2}\oplus\R\\%OK 3->7->6->5->4->3               74321:9#980198&0
74321:10&0,0,0,- e^{12},e^{14},e^{15}+e^{23},e^{45}+e^{13}+e^{26}&12457D\\%OK segno34567; 3->5->4->3          74321:10#984007&
74321:11&0,0,0,- e^{12},e^{14},e^{24}+e^{15},e^{13}+e^{45}+e^{26}&13457E\\%OK 6->5->4->3->6, poi segno 3,4,5,6,7         74321:11#988407&
74321:12&0,0,0,- e^{12},e^{23}+e^{14},e^{34}+e^{15},e^{35}+e^{16}&12357A\\%OK  2->-2,3->-3          74321:12#989015&
74321:15&0,0,0,- e^{12},e^{14}+e^{23},e^{15}+e^{34},e^{35}+e^{16}+e^{24}&12357C\\[2pt]%OK 3->-3,4->-4,5->-5,6->-6,7->-7   74321:15#1002071
\multicolumn{3}{c}{Type: $3121$}\\*
7431:1&0,0,0,e^{12},e^{14},e^{24},e^{15}&3457 \ N_{6,2,7}\oplus\R\\
7431:2&0,0,0,e^{12},e^{14},e^{24}+e^{13},e^{15}&2457G\\%OK  3->5->4->3   7431:2#39607
7431:3&0,0,0,e^{12},e^{14},e^{24},e^{23}+e^{15}&2457H\\%OK  3->5->4->3   7431:3#39847
7431:4&0,0,0,e^{12},e^{14},e^{13}+e^{24},e^{15}+e^{23}&2457K\\%OK 3->5->4->3    7431:4#40120
7431:5&0,0,0,e^{12},e^{13}+e^{24},e^{14},e^{34}+e^{25}&2357C\\%OK 1<->2,3->-3,4->-4,5->-5,6->-6,7->-7   7431:5#40727
7431:6a&0,0,0,- e^{12},e^{14}+e^{23},e^{24}+e^{13},e^{15}+e^{34}&2357D\\%OK  cambio segno 3,4,5,6,7   7431:6#41015
7431:6b&0,0,0,- e^{12},e^{23}-e^{14},e^{24}+e^{13},e^{15}+e^{34}&2357D_1\\%OK  cambio segno 4   7431:6#41015
7431:7a&0,0,0,e^{12},e^{14},e^{24},e^{26}+e^{15}&2457\ N_{6,2,5a}\oplus\R\\% OK 3->6->5->4->3   7431:7#44233
7431:7b&0,0,0,e^{12},- e^{14},e^{24},e^{26}+e^{15}&2457\ N_{6,2,5}\oplus\R\\%OK  1->(1+2)/2,2->(1-2)/2,3->7,4->3/(-2),5->(4+5)/4,6->(4-5)/(-4),7->6/4    7431:7#44233
7431:8&0,0,0,e^{12},e^{14},e^{24},e^{25}+e^{16}&2457\ N_{6,2,5}\oplus\R\\%OK   3->6->5->4->3  7431:8#44233
7431:9&0,0,0,e^{12},e^{14},e^{24}+e^{13},e^{16}+e^{25}&1357O\\%OK   1->-2,2->1,3->-4,4->3,5->-6,6->5,7->-7    7431:9#45066
7431:10a&0,0,0,e^{12},e^{14}+e^{23},e^{13}+e^{24},e^{15}+e^{26}&1357Q\\%OK  1->1+2,2->1-2,3->4,4->(-2)3-4,5->-2(5+6),6->2(6-5),7->(-4)7   7431:10#45387
7431:10b&0,0,0,e^{12},-e^{14}+e^{23},e^{13}+e^{24},e^{26}+e^{15}&1357QRS_1\ \lambda=1\\%OK  3->-4,4->3,5->-5,6->-6,7->-7   7431:10#45387
7431:11a&0,0,0,e^{12},e^{24}+e^{13},e^{14}+e^{23},e^{15}+e^{26}&1357Q_1\\%OK     1->1+2,2->1-2,3->4,4->(-2)3-4,5->2(6-5),6->(-2)(6+5),7->(-4)7   7431:11#45387
7431:11b&0,0,0,e^{12},e^{24}-e^{13},e^{23}+e^{14},e^{15}+e^{26}&1357QRS_1\ \lambda=1\\%OK    a=sqrt(2) 1->(-1+2)/a,2->(-1-2)/a,3->4,4->3,5->(6-5)/a,6->(-5-6)/a,7->7   7431:11#45387
7431:12a&0,0,0,e^{12},e^{14},e^{13}+e^{24},e^{26}+e^{15}+e^{34}&1357P\\%OK  1->2,2->-1,3->-4,4->3,5->6,6->-5    7431:12#45642
7431:12b&0,0,0,e^{12},- e^{14},e^{13}+e^{24},e^{34}+e^{26}+e^{15}&1357P_1\\%OK   1->-2,2->1,3->-4,4->3,5<->6     7431:12#45642
\multirow{2}{*}{7431:13a$\left\{\!\!\begin{array}{c}
\!\\
\!\\
\!\\
\!\\
\end{array}\!\!\right.\!\!$}&\multicolumn{1}{L}{0,0,0, (A-1) e^{12},e^{14}+e^{23},\hspace{58pt} A\!<\!0}&\left\{\!\begin{array}{c}
						1357QRS_1\\
						\lambda\!=\!A\!<\!0\\
						\end{array}\right.\\*%OK   \lambda<0,(lambda=1+A) 2->2sqrt(-1-A),3->A4,4->3Asqrt(-1-A),5->5Asqrt(-1-A),6->6(-A(-1-A)),7->7Asqrt(-1-A) prima era:a_2 e^{12},e^{14}+e^{23},e^{24}+ {(1+a_2)} e^{13},e^{26}+e^{34}+e^{15}       7431:13#45963
&\multicolumn{1}{R}{e^{24}+A e^{13},e^{26}+e^{34}+e^{15}\quad A\!\neq\!1,A\!>\!0}&\left\{\!\begin{array}{c}
						1357S\\
						\lambda\!=\!\frac{(1+A)^2}{(A-1)^2}\!>\!1\\
						\end{array}\right.\\% file.nb    7431:13#45963
\multirow{2}{*}{7431:13b$\left\{\!\!\begin{array}{c}
\!\\
\!\\
\!\\
\!\\
\!\\
\end{array}\!\!\right.\!\!$}&\multicolumn{1}{L}{0,0,0, (A-1) e^{12},e^{23}-e^{14},\hspace{35pt} A\!\neq\!1,A\!>\!0}&\left\{\!\begin{array}{c}
						1357QRS_1\\
						\lambda\!=\!A\!>\!0, \lambda\! \neq\! 1\\
						\end{array}\right.\\*  %OK  \lambda>0,(lambda=1+A) 2->2sqrt(1+A),3->-A4,4->3Asqrt(1+A),5->-5Asqrt(1+A),6->6(-A(1+A)),7->-7Asqrt(1+A) prima era:0,0,0, a_2 e^{12},e^{23}-e^{14},e^{24}+ {(1+a_2)} e^{13},e^{15}+e^{26}+e^{34}    7431:13#45963
&\multicolumn{1}{R}{e^{24}+A e^{13},e^{15}+e^{26}+e^{34}\quad A\!\neq\!-1,A\!<\!0}&\left\{\!\begin{array}{c}
						1357S\\
						0\!<\!\lambda\!=\!\frac{(1+A)^2}{(A-1)^2}\!<\!1\\
						\end{array}\right.\\*% file.nb     7431:13#45963
&0,0,0, -2 e^{12},e^{23}-e^{14},e^{24}- e^{13},e^{15}+e^{26}+e^{34}&1357R\\[2pt]%OK   1->1+2,2->1-2,3->(2)4,4->(4)3+(2)4,5->-4(5+6),6->4(5-6),7->(-8)7      7431:13#45963
\multicolumn{3}{c}{Type: $3211$}\\*
7421:1&0,0,0,e^{12},e^{13},e^{14},e^{16}&2457A\\%OK 3->5->7->6->4->3  7421:1#39031
7421:2&0,0,0,e^{12},e^{13},e^{24},e^{26}&2457B\\%OK  1<->2,3->5->6->-4,4->-3,7->-7    7421:2#39031
7421:3&0,0,0,e^{12},e^{13},e^{14},e^{16}+e^{23}&2457C\\%OK  3->5->7->6->4->3     7421:3#39544
7421:4&0,0,0,e^{12},e^{13},e^{14},e^{16}+e^{24}&2457F\\%OK  3->5->7->6->4->3    7421:4#39847
7421:5&0,0,0,e^{12},e^{13},e^{24},e^{26}+e^{14}&2457I\\%OK  1<->2,3->5->7,4->-3,6->-4,7->-6    7421:5#39847
7421:6&0,0,0,e^{12},e^{13},e^{14},e^{16}+e^{35}&1357E\\%OK  1->2,2->-1,3<->4,5<->6   7421:6#40119
7421:7&0,0,0,e^{12},e^{13},e^{24},e^{26}+e^{35}&1357I\\%OK  3<->4,5<->6    7421:7#40119
7421:8&0,0,0,e^{12},e^{13},e^{24},e^{15}+e^{26}&1357G\\%OK 3<->4,5<->6    7421:8#40359
7421:9&0,0,0,- e^{12},e^{13},e^{23}+e^{14},e^{34}+e^{16}&2357B\\%OK  3->-3,4->-4,5->-6,6->-5,7->-7    7421:9#40439
7421:10&0,0,0,e^{12},e^{13},e^{14}+e^{23},e^{25}+e^{16}&1357M\ \lambda=1\\%OK  3<->4,5<->6     7421:10#40680
7421:11a&0,0,0,e^{12},e^{13},e^{14},e^{16}+e^{24}+e^{35}&1357F\\%OK  1<->2,3->4,4->-3,5->6,6->-5,7->-7    7421:11#40935
7421:11b&0,0,0,- e^{12},e^{13},e^{14},e^{16}+e^{35}+e^{24}&1357F_1\\%OK   1<->2,3<->4,5<->6     7421:11#40935
7421:12&0,0,0,e^{12},e^{13},e^{14},e^{34}+e^{16}+e^{25}&1357D\\%OK  1<->2,3->-4,4->-3,5->-6,6->-5,7->-7      7421:12#40935
7421:13&0,0,0,e^{12},e^{13},e^{24},e^{35}+e^{14}+e^{26}&1357J\\%OK   3<->4,5<->6     7421:13#40935
7421:14&0,0,0, (\lambda-1) e^{12}, \lambda e^{13},e^{14}+e^{23},e^{25}+e^{34}+e^{16}&1357M\ \lambda\neq0,1\\[2pt]%OK    3->A4,4->A3,5->A(A+1)6,6->A5,7->A7,\lambda=A+1  prima era 0,0,0, a_2 e^{12}, {(1+a_2)} e^{13},e^{14}+e^{23},e^{25}+e^{34}+e^{16} 7421:14#41256
\multicolumn{3}{c}{Type: $322$}\\*
742:1&0,0,0,e^{12},e^{13},e^{14},e^{24}&357B\\%OK 3<->4,5->7->6->5    742:1#1449
742:2&0,0,0,e^{12},e^{13},e^{24},e^{23}+e^{14}&357C\\%OK 3<->4,5<->7     742:2#1482
742:3&0,0,0,e^{12},e^{13},e^{14},e^{15}&247A\\%OK   742:3#2216
742:4&0,0,0,e^{12},e^{13},e^{24},e^{15}&247B\\%OK 2<->3,4<->5,6<->7  742:4#2216
742:5&0,0,0,e^{12},e^{13},e^{24},e^{35}&247F\\%OK   2->2+3,3->2-3,4->4+5,5->4-5,6->6+7,7->6-7    742:5#2216
742:6&0,0,0,e^{12},e^{13},e^{14},e^{23}+e^{15}&247L\\%OK    2<->3,4<->5,6<->7, segno 3,5,6    742:6#2264
742:7&0,0,0,e^{12},e^{13},e^{24},e^{15}+e^{23}&247M\\%OK   2<->3,4<->5,6<->7, segno 3,5       742:7#2264
742:8&0,0,0,e^{12},e^{13},e^{14},e^{15}+e^{24}&247C\\%OK 2<->3,4<->5,6<->7    742:8#2267
742:9a&0,0,0,e^{12},e^{13},e^{14},e^{24}+e^{35}&247E_1\\%OK  742:9#2267
742:9b&0,0,0,- e^{12},e^{13},e^{14},e^{35}+e^{24}&247E\\%OK       2->2+3,3->2-3,4->-4-5,5->4-5,6->-6,7->(-2)7   742:9#2267
742:10&0,0,0,e^{12},e^{13},e^{14},e^{34}+e^{25}&247D\\%OK   742:10#2267
742:11&0,0,0,e^{12},e^{13},e^{24},e^{35}+e^{14}&247G\\%OK  2->1+2+3,3->2-3,4->4+5,5->4-5,6->6+7,7->6-7  742:11#2267
742:12&0,0,0,e^{12},e^{13},e^{24},e^{25}+e^{34}&247I\\%OK   2<->3,4<->5,6<->7   742:12#2267
742:13&0,0,0,e^{12},e^{13},e^{23}+e^{14},e^{24}+e^{15}&247O\\%OK     2<->3,4<->5,6<->7 cambio segno 1,2,5,6   742:13#2315
742:14a&0,0,0,e^{12},e^{13},e^{23}+e^{14},e^{35}+e^{24}&247R_1\\%OK   742:14#2315
742:14b&0,0,0,- e^{12},e^{13},e^{14}+e^{23},e^{24}+e^{35}&247R\\%OK  pongo A=sqr(2,5)=radice 5a di 2  1->-A1,2->(-2-3)/A^3,3->(2-3)/A^3,4->(-4-5)/A^2,5->(-4+5)/A^2,6->6/A,     742:14#2315
742:15&0,0,0,e^{12},e^{13},e^{14}+e^{23},e^{25}+e^{34}&247Q\\%OK    742:15#2315
742:16&0,0,0,e^{12},e^{13},e^{15}+e^{24},e^{35}+e^{14}&247H\\%OK    1->1/2,2->(1/2)1+2+3,3->(1/2)1+2-3,4->(4+5)/2,5->(4-5)/2,6->(6+7)/2,7->(6-7)/2   742:16#2318
742:17&0,0,0,e^{12},e^{13},e^{24}+e^{15},e^{34}+e^{25}&247K\\%OK    2<->3,4<->5   742:17#2318
742:18a&0,0,0,e^{12},e^{13},e^{25}+e^{34},e^{24}+e^{35}&247F\\%OK 6<->7    742:18#2318
742:18b&0,0,0,- e^{12},e^{13},-e^{25}+e^{34},e^{24}+e^{35}&247F_1\\[2pt]%OK  2->-2,6->7,7->-6    742:18#2318
\multicolumn{3}{c}{Type: $331$}\\*
741:1&0,0,0,e^{12},e^{13},e^{23},e^{14}&357A\\%OK 3<->4,5<->7    741:1#1416
741:2&0,0,0,e^{12},e^{13},e^{23},e^{24}+e^{15}&247N\\% OK 6<->7   741:2#2249
741:3a&0,0,0,e^{12},e^{13},e^{23},e^{24}+e^{35}&247P_1\\%OK     741:3#2249
741:3b&0,0,0,- e^{12},e^{13},e^{23},e^{24}+e^{35}&247P\\%OK   1->-1,2->(-1/2)2-3,3->(1/2)2-3,4->(1/2)4+5,5->-(1/2)4+5   741:3#2249
741:4&0,0,0,e^{12},e^{13},e^{23},e^{25}+e^{34}&247P\\% OK    741:4#2249
741:5&0,0,0,e^{12},e^{13},e^{23},e^{15}+e^{36}+e^{24}&147E_1\ \lambda=2\\ %OK e1->1/3 (f1+2 f2+6 f3),e2->1/3 (f1-f2+6 f3),e3->1/3 (2 f1+f2-6 f3),e4->-(f4/3)+2 f6,e5->-(f4/3)-2 (f5+f6),e6->1/3 (f4-6 f5),e7->f7  così si arriva a 0,0,0,12,13,23,(-2)(15+25+26+34)  poi 5->-6,6->5 arrivo a GONG     741:5#2556
741:6&0,0,0, (\lambda-1)e^{12},  \lambda e^{13},e^{23},e^{34}+e^{25}+e^{16}&147E\ \lambda\neq0,1\\[2pt]%OK  4->(4)/A,5->6/(-1-A),6->5,7->-7,A=\lambda-1 prima era: 0,0,0, e^{12} a_2, {(1+a_2)} e^{13},e^{23},e^{34}+e^{25}+e^{16}     741:6#2556
\multicolumn{3}{c}{Type: $4111$}\\*
7321:1&0,0,0,0,e^{12},e^{15},e^{16}&3457\ N_{5,2,1}\oplus\R^2\\%OK   4->7->5->3->6->4 7321:1#38757
7321:2&0,0,0,0,e^{12},e^{15},e^{16}+e^{34}&1457A\\%OK  3<->5,4<->6   7321:2#39047
7321:3&0,0,0,0,e^{12},e^{15},e^{16}+e^{23}&2457\ N_{6,2,4}\oplus\R\\%OK    4->7->6->5->4   7321:3#39286
7321:4&0,0,0,0,e^{12},e^{15},e^{25}+e^{16}&3457\ N_{5,1}\oplus\R^2 \\%OK  3->6->4->7->5->3    7321:4#39573
7321:5&0,0,0,0,e^{12},e^{15},e^{16}+e^{25}+e^{34}&1457B\\%OK  3<->5,4<->6      7321:5#39863
7321:6&0,0,0,0,- e^{12},e^{15}+e^{23},e^{16}+e^{35}&2357\ N_{6,2,3}\oplus\R\\%OK 4->7->6->5->4  7321:6#40166
7321:7&0,0,0,0,e^{12},e^{25}+e^{13},e^{26}+e^{35}+e^{14}&1357A\\[2pt]%OK  1->2,2->-1,3->-3,5->4->6,6->-5   7321:7#40695
\multicolumn{3}{c}{Type: $412$}\\*
732:1&0,0,0,0,e^{12},e^{15},e^{25}&457\ N_{5,2,3}\oplus\R^2\\%OK 4->7->5->3->6->4  732:1#1415
732:2&0,0,0,0,e^{12},e^{15},e^{13}+e^{25}&357\ N_{6,2,10}\oplus\R\\%OK  4->7->6->5->4   732:2#1464
732:3&0,0,0,0,e^{12},e^{15},e^{25}+e^{34}&257K\\%OK   3->4->5->3   732:3#1465
732:4a&0,0,0,0,e^{12},e^{15}+e^{23},e^{13}+e^{25}&357\ N_{6,2,9}\oplus\R\\ %OK  4->7->6->5->4    732:4#1497
732:4b&0,0,0,0,e^{12},-e^{15}+e^{23},e^{13}+e^{25}&357\ N_{6,2,9a}\oplus\R\\%OK  4->7->6->5->4    732:4#1497
732:5&0,0,0,0,e^{12},e^{25}+e^{13},e^{15}+e^{34}&257L\\%OK      1<->2,3->4->5->3, poi cambio segno a 1,2,3,4,5   732:5#1498
732:6&0,0,0,0,e^{12},e^{15}+e^{23},e^{25}+e^{14}&257J\\[2pt]%OK   3->4->5->3   732:6#1513
\multicolumn{3}{c}{Type: $421$}\\*
731:1&0,0,0,0,e^{12},e^{13},e^{15}&357\ N_{6,3,4}\oplus\R\\%OK   4->7->6->5->4     731:1#1398
731:2&0,0,0,0,e^{12},e^{13},e^{25}&357\ N_{6,3,3}\oplus\R\\%OK   4->7->6->5->4     731:2#1398
731:3&0,0,0,0,e^{12},e^{34},e^{15}&257\ N_{4,2}\oplus N_{3,2}\\%OK   3<->5,4->6->7->4   731:3#1399
731:4&0,0,0,0,e^{12},e^{13},e^{23}+e^{15}&357\ N_{6,2,8}\oplus\R\\%OK   4->7->6->5->4    731:4#1431
731:5&0,0,0,0,e^{12},e^{13},e^{34}+e^{15}&257F\\%OK   1->-2,2->-1,3->-4,4->-5,5->-3,6<->7   731:5#1432
731:6&0,0,0,0,e^{12},e^{13},e^{25}+e^{34}&257E\\%OK   1->-2,2->-1,3->-4,4->-5,5->-3,6<->7   731:6#1432
731:7&0,0,0,0,e^{12},e^{24}+e^{13},e^{15}&257B\\%OK      3->4->5->3,6<->7   731:7#1432
731:8&0,0,0,0,e^{12},e^{34},e^{25}+e^{13}&257H\\%OK      1<->2,3->4->5->3,6<->7, poi cambio segno a 1,2,3,4,5   731:8#1432
731:9&0,0,0,0,e^{12},e^{13},e^{25}+e^{14}&257C\\%OK       1->-2,2->1,3<->5,6->-7,7->-6   731:9#1447
731:10&0,0,0,0,e^{12},e^{13},e^{15}+e^{24}&257A\\%OK   3<->5,6<->7    731:10#1447
731:11&0,0,0,0,e^{12},e^{24}+e^{13},e^{34}+e^{15}&257G\\%OK 1->-1,2->-2,3<->5,6->-7,7->-6   731:11#1450
731:12&0,0,0,0,e^{12},e^{23}+e^{14},e^{13}+e^{25}&257D\\%OK 1->-2,2->-1,3->-4,4->-5,5->-3,6<->7 poi cambio segno a 1,2,3,4,5    731:12#1465
731:13&0,0,0,0,e^{12},e^{13},e^{25}+e^{16}&247\ N_{6,2,6}\oplus\R\\% OK   4->7->6->5->4   731:13#2216
731:14a&0,0,0,0,e^{12},e^{13},e^{36}+e^{25}&247\ N_{6,3,1a}\oplus\R\\%OK   4->7->6->5->4    731:14#2216
731:14b&0,0,0,0,- e^{12},e^{13},e^{36}+e^{25}&247\ N_{6,3,1}\oplus\R\\%OK   4->7->6->5->4    731:14#2216
731:15&0,0,0,0,e^{12},e^{13},e^{26}+e^{35}&247\ N_{6,3,1}\oplus\R\\%OK  4->7->6->5->4  731:15#2216
731:16a&0,0,0,0,e^{12},e^{13},e^{14}+e^{36}+e^{25}&147A_1\\%OK    6->5->4->6    731:16#2265
731:16b&0,0,0,0,- e^{12},e^{13},e^{36}+e^{25}+e^{14}&147A\\%OK prima segno 5. Poi 4->7->6->5->4. Poi 1->(2)1,2->(-2+3)/2,3->(2+3)/2,4->-4-5,5->4+5,6->26   731:16#2265
731:17&0,0,0,0,e^{12},e^{13},e^{24}+e^{36}+e^{15}&147B\\%OK  6->5->4->6   731:17#2265
731:18&0,0,0,0,e^{12},e^{13},e^{35}+e^{14}+e^{26}&147A\\%OK  6->5->4->6   731:18#2265
731:19&0,0,0,0,e^{12},e^{34},e^{36}+e^{15}&137A\\%OK    731:19#2728
731:20&0,0,0,0,- e^{12},e^{14}+e^{23},e^{35}+e^{16}&137C\\%OK  5->-5   731:20#2764
731:21&0,0,0,0,e^{12},e^{34},e^{25}+e^{13}+e^{46}&137B\\%OK 1->-2,2->1,3->-4,4->3  731:21#2776
731:22a&0,0,0,0,e^{23}+e^{14},e^{24}+e^{13},e^{15}+e^{26}&137A\\%OK  1->(-3-1)/2,2->(-1+3)/2,3->(2-4)/2,4->(2+4)/2,5->(-5-6)/2,6->(6-5)/2,7->7/2   731:22#2800
731:22b&0,0,0,0,e^{23}-e^{14},e^{24}+e^{13},e^{26}+e^{15}&137A_1\\%OK  1<->2,3<->4,5<->6   731:22#2800
731:23&0,0,0,0,e^{12},e^{13}+e^{24},e^{14}+e^{26}+e^{35}&137D\\%OK   1<->2,5->-5    731:23#2812
731:24a&0,0,0,0,e^{24}+e^{13},-e^{12}+e^{34},e^{46}+e^{15}+e^{23}&137B_1\\%OK    2->-4,4->2    731:24#2848
731:24b&0,0,0,0,e^{24}-e^{13},-e^{12}+e^{34},e^{23}+e^{46}+e^{15}&137B\\[2pt]%OK   1->(-1-3)/2,2->(-2+4)/2,3->(-2-4)/2,4->(-1+3)/2,5->(-5-6)/2,6->(-5+6)/2,7->(7)/2    731:24#2848
\multicolumn{3}{c}{Type: $43$}\\*
73:1&0,0,0,0,e^{12},e^{13},e^{23}&47\ N_{6,3,6}\oplus\R\\%OK   4->7->6->5->4     73:1#57
73:2&0,0,0,0,e^{12},e^{13},e^{14}&37A\\%OK 2<->1, 5<->-5    73:2#73
73:3&0,0,0,0,e^{12},e^{13},e^{24}&37B\\%OK 1->2->3->1,5->6->-5   73:3#73
73:4&0,0,0,0,e^{12},e^{13},e^{23}+e^{14}&37C\\%OK 1->2->4->1,5->-7,7->5   73:4#76
73:5&0,0,0,0,e^{12},e^{34},e^{13}+e^{24}&37D\\%OK 3<->2, 5->6->7->5    73:5#76
73:6a&0,0,0,0,e^{12},e^{14}+e^{23},e^{13}+e^{24}&37B\\%OK 1->(2+3)/2,2->(2-3)/2,3->(1-4)/2,4->(1+4)/2,5->6/(-2),6->(7-5)/2,7->(-5-7)/2 73:6#79
73:6b&0,0,0,0,e^{12},-e^{14}+e^{23},e^{24}+e^{13}&37B_1\\%OK 2->4,4->-2,5->7->6->5   73:6#79
73:7a&0,0,0,0,e^{23}+e^{14},e^{24}+e^{13},e^{12}+e^{34}&37D\\%OK 5+6->5,6+7->6,1+2->1,3+4->2,1-4->3,2+3->4,5+6+7->7/2; 0,12,34,1(4-3)+2(-3-4), poi 3'=4-3 e 4'=-3-4,6'=(2)6 ho ricondotto a 73:5(a meno di segni)    73:7#82
73:7b&0,0,0,0,e^{14}+e^{23},e^{24}-e^{13},e^{12}+e^{34}&37D_1\\%OK cambio segno3,poi 5<->7    73:7#82
\multicolumn{3}{c}{Type: $511$}\\*
721:1&0,0,0,0,0,e^{12},e^{16}&457\ N_{4,2}\oplus\R^3\\%OK 6<->3,7<->4   721:1#1364
721:2&0,0,0,0,0,e^{12},e^{26}+e^{13}&357\ N_{5,2,2}\oplus\R^2\\%OK 6<->4,7<->5    721:2#1413
721:3&0,0,0,0,0,e^{12},e^{16}+e^{34}&257\ N_{6,3,2}\oplus\R\\%OK  5->7->6->5   721:3#1414
721:4&0,0,0,0,0,e^{12},e^{45}+e^{13}+e^{26}&157\\[2pt]%OK unico   721:4#1463
\multicolumn{3}{c}{Type: $52$}\\*
72:1&0,0,0,0,0,e^{12},e^{13}&47\ N_{5,3,2}\oplus\R^2\\%OK    6<->4,7<->5    72:1#54
72:2&0,0,0,0,0,e^{12},e^{34}&37\ N_{3,2}\oplus N_{3,2}\oplus\R\\%OK  5->7->6->5   72:2#70
72:3&0,0,0,0,0,e^{12},e^{13}+e^{24}&37\ N_{6,3,5}\oplus\R\\%OK    5->7->6->5    72:3#73
72:4a&0,0,0,0,0,e^{24}+e^{13},e^{34}+e^{12}&37\ N_{3,2}\oplus N_{3,2}\oplus\R\\%OK      5->7->6->5   72:4#76
72:4b&0,0,0,0,0,e^{24}-e^{13},e^{34}+e^{12}&37\ N_{6,4,4a}\oplus\R\\%OK   5->7->6->5   72:4#76
72:5&0,0,0,0,0,e^{12},e^{13}+e^{45}&27A\\%OK   3<->4   72:5#89
72:6&0,0,0,0,0,e^{13}+e^{24},e^{35}+e^{12}&27B\\[2pt]%OK   1->2->-3,3->-1,4->-4,7->-7   72:6#92
\multicolumn{3}{c}{Type: $61$}\\*
71:1&0,0,0,0,0,0,e^{12}&57\ N_{3,2}\oplus\R^4\\%OK  3<->7    71:1#35
71:2&0,0,0,0,0,0,e^{12}+e^{34}&37\ N_{5,3,1}\oplus\R^2\\%OK  5<->7     71:2#70
71:3&0,0,0,0,0,0,e^{12}+e^{34}+e^{56}&17\\[2pt]%OK unico     71:3#105
\multicolumn{3}{c}{Type: $7$}\\*
7:1&0,0,0,0,0,0,0&7\ \R^7\\%OK   7:1#-1
\end{longtable}
}
\end{footnotesize}

\bibliographystyle{plain}
\bibliography{construction.bbl}

\def\cprime{$'$}
\begin{thebibliography}{10}

\bibitem{Andrada:ComplexProduct}
A.~Andrada.
\newblock Complex product structures on 6-dimensional nilpotent {L}ie algebras.
\newblock {\em Forum Math.}, 20(2):285--315, 2008.

\bibitem{AngellaRossi:DComplexCohomology}
D.~Angella and F.~A. Rossi.
\newblock Cohomology of {$\mathbf{D}$}-complex manifolds.
\newblock {\em Differential Geom. Appl.}, 30(5):530--547, 2012.

\bibitem{BazzoniFernandezMunoz:NonFormalCoSymplectic}
G.~Bazzoni, M.~Fern\'andez, and V.~Mu\~noz.
\newblock Non-formal co-symplectic manifolds.
\newblock {\em Trans. Amer. Math. Soc.}, 367(6):4459--4481, 2015.

\bibitem{BCFG:BoundaryHitchinHypoFlow}
F.~Belgun, V.~Cort\'es, M.~Freibert, and O.~Goertsches.
\newblock On the boundary behavior of left-invariant {H}itchin and hypo flows.
\newblock {\em J. Lond. Math. Soc. (2)}, 92(1):41--62, 2015.

\bibitem{CavalcantiGualtieri:GeneralizedNilmanifolds}
G.~R. Cavalcanti and M.~Gualtieri.
\newblock Generalized complex structures on nilmanifolds.
\newblock {\em J. Symplectic Geom.}, 2(3):393--410, 2004.

\bibitem{ContiFernandez:calibrated}
D.~Conti and M.~Fern\'andez.
\newblock Nilmanifolds with a calibrated {$G_2$}-structure.
\newblock {\em Differential Geom. Appl.}, 29(4):493--506, 2011.

\bibitem{ContiMadsen:Harmonic}
D.~Conti and T.~B. Madsen.
\newblock Harmonic structures and intrinsic torsion.
\newblock {\em Transform. Groups}, 20(3):699--723, 2015.

\bibitem{ContiRossi:EinsteinNice}
D.~Conti and F.~A. Rossi.
\newblock Indefinite {E}instein metrics on nice {L}ie groups.
\newblock arXiv:1805.08491.

\bibitem{ContiRossi:ricci}
D.~Conti and F.~A. Rossi.
\newblock The {R}icci tensor of almost parahermitian manifolds.
\newblock {\em Ann. Global Anal. Geom.}, 53(4):467--501, JUN 2018.

\bibitem{ContiRossi:EinsteinNilpotent}
D.~Conti and F.~A. Rossi.
\newblock Einstein nilpotent {L}ie groups.
\newblock {\em J. Pure Appl. Algebra}, 223(3):976--997, 2019.

\bibitem{CorderoFernandezUgarte}
L.~A. Cordero, M.~Fern\'andez, and L.~Ugarte.
\newblock Pseudo-{K}\"ahler metrics on six-dimensional nilpotent {L}ie
  algebras.
\newblock {\em J. Geom. Phys.}, 50(1-4):115--137, 2004.

\bibitem{DottiFino:AbelianHypercomplex}
I.~G. Dotti and A.~Fino.
\newblock Abelian hypercomplex 8-dimensional nilmanifolds.
\newblock {\em Ann. Global Anal. Geom.}, 18(1):47--59, 2000.

\bibitem{Fernandez:Calibrated}
M.~Fern{\'a}ndez.
\newblock An example of a compact calibrated manifold associated with the
  exceptional {L}ie group {$G\sb 2$}.
\newblock {\em J. Differential Geom.}, 26(2):367--370, 1987.

\bibitem{FernandezCulma}
E.~A. Fern\'andez-Culma.
\newblock Classification of 7-dimensional {E}instein nilradicals.
\newblock {\em Transform. Groups}, 17(3):639--656, 2012.

\bibitem{Fernandez-Culma2015:SolitonOn6DimNilmanifolds}
E.~A. Fern{\'a}ndez-Culma.
\newblock Soliton almost {K}{\"a}hler structures on 6-dimensional nilmanifolds
  for the symplectic curvature flow.
\newblock {\em {J}. {G}eom. {A}nal.}, 25(4):2736--2758, Oct 2015.

\bibitem{FinoLujan:TorsionFreeG22}
A.~Fino and I.~Luj\'an.
\newblock Torsion-free {$G^*_{2(2)}$}-structures with full holonomy on
  nilmanifolds.
\newblock {\em Adv. Geom.}, 15(3):381--392, 2015.

\bibitem{FinoPartonSalamon}
A.~Fino, M.~Parton, and S.~Salamon.
\newblock Families of strong {KT} structures in six dimensions.
\newblock {\em Comment. Math. Helv.}, 79(2):317--340, 2004.

\bibitem{Gong}
M.-P. Gong.
\newblock {\em Classification of nilpotent {L}ie algebras of dimension 7 (over
  algebraically closed fields and {R})}.
\newblock ProQuest LLC, Ann Arbor, MI, 1998.
\newblock Thesis (Ph.D.)--University of Waterloo (Canada).

\bibitem{Heber:noncompact}
J.~Heber.
\newblock Noncompact homogeneous {E}instein spaces.
\newblock {\em Invent. Math.}, 133(2):279--352, 1998.

\bibitem{KadiogluPayne:Computational}
H.~Kadioglu and T.~L. Payne.
\newblock Computational methods for nilsoliton metric {L}ie algebras {I}.
\newblock {\em J. Symbolic Comput.}, 50:350--373, 2013.

\bibitem{Lauret:Einstein_solvmanifolds}
J.~Lauret.
\newblock Einstein solvmanifolds are standard.
\newblock {\em Ann. of Math. (2)}, 172(3):1859--1877, 2010.

\bibitem{LauretWill:Einstein}
J.~Lauret and C.~Will.
\newblock Einstein solvmanifolds: existence and non-existence questions.
\newblock {\em Math. Ann.}, 350(1):199--225, 2011.

\bibitem{LauretWill:diagonalization}
J.~Lauret and C.~Will.
\newblock On the diagonalization of the {R}icci flow on {L}ie groups.
\newblock {\em Proc. Amer. Math. Soc.}, 141(10):3651--3663, 2013.

\bibitem{Magnin}
L.~Magnin.
\newblock Sur les alg\`ebres de {L}ie nilpotentes de dimension {$\leq 7$}.
\newblock {\em J. Geom. Phys.}, 3(1):119--144, 1986.

\bibitem{Malcev}
A.~I. Malcev.
\newblock On a class of homogeneous spaces.
\newblock {\em Amer. Math. Soc. Translation}, 1951(39):33, 1951.

\bibitem{Nikolayevsky}
Y.~Nikolayevsky.
\newblock Einstein solvmanifolds and the pre-{E}instein derivation.
\newblock {\em Trans. Amer. Math. Soc.}, 363(8):3935--3958, 2011.

\bibitem{Nomizu:cohomology}
K.~Nomizu.
\newblock On the cohomology of compact homogeneous spaces of nilpotent {L}ie
  groups.
\newblock {\em Ann. of Math. (2)}, 59:531--538, 1954.

\bibitem{Payne:ExistenceOfSolitonMetrics}
T.~L. Payne.
\newblock The existence of soliton metrics for nilpotent {L}ie groups.
\newblock {\em Geom. Dedicata}, 145:71--88, 2010.

\bibitem{Payne:Applications}
T.~L. Payne.
\newblock Applications of index sets and {N}ikolayevsky derivations to positive
  rank nilpotent {L}ie algebras.
\newblock {\em J. Lie Theory}, 24(1):1--27, 2014.

\bibitem{Payne:Methods}
T.~L. Payne.
\newblock Methods for parametrizing varieties of {L}ie algebras.
\newblock {\em J. Algebra}, 445:1--34, 2016.

\bibitem{Rollenske:GeometryNilmanifold}
S.~Rollenske.
\newblock Geometry of nilmanifolds with left-invariant complex structure and
  deformations in the large.
\newblock {\em Proc. Lond. Math. Soc. (3)}, 99(2):425--460, 2009.

\bibitem{RossiTomassini}
F.~A. Rossi and A.~Tomassini.
\newblock On strong {K}\"ahler and astheno-{K}\"ahler metrics on nilmanifolds.
\newblock {\em Adv. Geom.}, 12(3):431--446, 2012.

\bibitem{Salamon:ComplexStructures}
S.~Salamon.
\newblock Complex structures on nilpotent {L}ie algebras.
\newblock {\em J. Pure Appl. Algebra}, 157:311--333, 2001.

\bibitem{Hengesbach}
F.~Schulte-Hengesbach.
\newblock Half-flat structures on products of three-dimensional lie groups.
\newblock {\em J. Geom. Phys}, 60(11):1726 -- 1740, 2010.

\end{thebibliography}

\small\noindent Dipartimento di Matematica e Applicazioni, Universit\`a di Milano Bicocca, via Cozzi 55, 20125 Milano, Italy.\\
\texttt{diego.conti@unimib.it}\\
\texttt{federico.rossi@unimib.it}

\end{document}